\DeclareMathOperator{\fs}{fs}
\DeclareMathOperator{\elm}{elm}
\newtheorem{theorem}{Theorem}
\newtheorem{proposition}[theorem]{Proposition}
\newtheorem{lemma}[theorem]{Lemma}
\newtheorem{corollary}[theorem]{Corollary}
\def\claimqed{\smash{\scalebox{.75}[0.75]{$(\square$)}}}
\author{Hany Ibrahim \qquad  Peter Tittmann\\University of Applied Sciences Mittweida}
\title{Edge Contraction and Line Graphs}
\date{}
\begin{document}

\maketitle

\begin{abstract}
	Given a family of graphs $\mathcal{H}$, a graph $G$ is $\mathcal{H}$-free if any subset of $V(G)$ does not induce a subgraph of $G$ that is isomorphic to any graph in $\mathcal{H}$. We present sufficient and necessary conditions for a graph $G$ such that $G/e$ is $\mathcal{H}$-free for any edge $e$ in $E(G)$. Thereafter, we use these conditions to characterize claw-free and line graphs.	
\end{abstract}

\section{Introduction}
%
A graph $G$ is an ordered pair $(V(G),E(G))$ where $V(G)$ is a set of vertices and $E(G)$ is a set of $2$-elements subsets of $V(G)$ called edges. The set of all graphs is $\mathcal{G}$. The degree of a vertex $v$, denoted by $deg(v)$, is the number of edges incident to $v$. We denote the maximum degree of a vertex in a graph $G$ by $\Delta(G)$. We call two vertices adjacent if there is an edge between them, otherwise, we call them nonadjacent. Moreover, the set of all vertices adjacent to a vertex $v$ is called the \emph{neighborhood} of $v$, which we denote by $N(v)$. On the other hand, the \emph{closed neighborhood} of $v$, denoted by $N[v]$, is $N(v) \cup \{v\}$. Generalizing this to a set of vertices $S$, the neighborhood of $S$, denoted by $N(S)$, is defined by $N(S) := \bigcup_{v\in S} N(v) - S$. Similarly the closed neighborhood of $S$, denoted by $N[S]$, is $N(S) \cup S$. Moreover, for a subset of vertices $S$, we denote the set of vertices in $S$ that are adjacent to $v$ by $N_{S}(v)$. Furthermore, we write $v$ is adjacent to $S$ to mean that $S \subseteq N(v)$ and $v$ is adjacent to exactly $S$ to mean that $S = N(v)$.

A set of vertices $S$ is \emph{independent} if there is no edge between any two vertices in $S$. We call a set $S$ \emph{dominating} if $N[S] = V(G)$. A subgraph $H$ of a graph $G$ is a graph where $V(H) \subseteq V(G)$ and $E(H) \subseteq E(G)$. An \emph{induced} graph $G[S]$ for a given set $S \subseteq V$, is a subgraph of $G$ with vertex set $S$ and two vertices in $G[S]$ are adjacent if and only if they are adjacent in $G$. Two graphs $G,H$ are \emph{isomorphic} if there is a bijective mapping $f: V(G) \to V(H)$ where $u,v \in V(G)$ are adjacent if and only if $f(u),f(v)$ are adjacent in $H$. In this case we call the mapping $f$ an isomorphism. Two graphs that are not isomorphic are called \emph{non-isomorphic}. In particular, an isomorphism from a graph to itself is called \emph{automorphism}. Furthermore, two vertices $u,v$ are similar in a graph $G$ if there is an automorphism that maps $u$ to $v$. The set of all automorphisms of a graph $G$ forms a group called the automorphism group of $G$, denoted by \emph{Aut($G$)}. The complement of a graph $G$, denoted by $\bar{G}$, is a graph with the same vertex set as $V(G)$ and two vertices in $\bar{G}$ are adjacent if and only if they are nonadjacent in $G$.

The \emph{independence number} of a graph $G$, denoted by $\alpha(G)$, is the largest cardinality of an independent set in $G$. In this paper, we write singletons $\{x\}$ just as $x$ whenever the meaning is clear from the context. A vertex $u$ is a \emph{corner dominated} by $v$ if $N[u] \subseteq N[v]$.
Let $\mathcal{H}$ be a set of graphs. A graph $G$ is called \emph{$\mathcal{H}$-free} if there is no induced subgraph of $G$ that is isomorphic to any graph in $\mathcal{H}$, otherwise, we say $G$ is \emph{$\mathcal{H}$-exist}. 

By \emph{contracting} the edge between $u$ and $v$, we mean the graph constructed from $G$ by adding a vertex $w$ with edges from $w$ to the union of the neighborhoods of $u$ and $v$, followed by removing $u$ and $v$. We denote the graph obtained from contracting $uv$ by $G/uv$. If $e$ is the edge between $u$ and $v$, then we also denote the graph $G/uv$ by $G/e$. Further, we call $G/e$ a $G$-contraction. 
Finally, any graph in this paper is simple. For notions not defined, please consult \cite{bondy2000graph}. We divide longer proofs into smaller claims, and we prove them only if their proofs are not apparent.

For a graph invariant $c$, a graph $G$, and a $G$-contraction $H$, the question of how $c(G)$ differs from $c(H)$ is investigated for different graph invariants. For instance, how contracting an edge in a graph affects its $k$-connectivity. Hence, the intensively investigated (\cite{kriesell2002survey}) notion of a \emph{$k$-contractible} edge in a $k$-connected graph $G$ is defined as the edge whose contraction yields a $k$-connected graph. Another example is in the game Cops and Robber where a policeman and a robber are placed on two vertices of a graph in which they take turns to move to a neighboring vertex. For any graph $G$, if the policeman can always end in the same vertex as the robber, we call $G$ \emph{cop-win}. However, $G$ is \emph{$CECC$} if it is not cop-win, but any $G$-contraction is cop-win. The characteristics of a $CECC$ graph are studied in \cite{cameron2015edge}.

A further example is the investigation of the so-called \emph{contraction-critical edges}, with respect to independence number, that is, an edge $e$ in a graph $G$ where $\alpha(G/e) \leq \alpha(G)$, studied in \cite{plummer2014note}. Furthermore, the case where $c$ is the chromatic and clique number, respectively, has been investigated in \cite{diner2018contraction, paulusma2016reducing, paulusma2019critical}.

In this article, we investigate the graph invariant $H$-free for a given set of graphs $\mathcal{H}$. In particular, we present sufficient and necessary conditions for a graph $G$ such that any $G$-contraction is $\mathcal{H}$-free.

Let $\mathcal{H}$ be a set of graphs. The set of \emph{elementary (minimal)} graphs in $\mathcal{H}$, denoted by \emph{$\elm$($\mathcal{H}$)}, is defined as $\{H \in \mathcal{H}: $ if $ G \in \mathcal{H} $ and $H$ is $G$-exist, then $G$ is isomorphic to $H\}$.
From the previous definition, we can directly obtain the following.
\begin{proposition}{\label{prop: elem}}
	Let $\mathcal{H}$ be a set of graphs. Graph $G$ is $\mathcal{H}$-free if and only if G is $\elm(\mathcal{H})$-free.
\end{proposition}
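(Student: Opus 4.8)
The plan is to prove the two implications separately, with the forward direction immediate and the reverse direction requiring a short induction. Since $\elm(\mathcal{H}) \subseteq \mathcal{H}$ directly from the definition, any induced subgraph of $G$ isomorphic to a member of $\elm(\mathcal{H})$ is in particular isomorphic to a member of $\mathcal{H}$; hence if $G$ is $\mathcal{H}$-free it is, a fortiori, $\elm(\mathcal{H})$-free. This settles the ``only if'' direction with no further work.

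For the ``if'' direction I would argue by contraposition: assuming $G$ is $\mathcal{H}$-exist, I will exhibit an induced subgraph of $G$ isomorphic to a member of $\elm(\mathcal{H})$, thereby showing $G$ is $\elm(\mathcal{H})$-exist. Fix $H \in \mathcal{H}$ such that $G$ has an induced subgraph isomorphic to $H$, and proceed by strong induction on $\lvert V(H) \rvert$. If $H \in \elm(\mathcal{H})$ we are already done. Otherwise the defining property of $\elm(\mathcal{H})$ fails for $H$, so there is some $H' \in \mathcal{H}$ with $H' \not\cong H$ such that $H$ contains an induced copy of $H'$.

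The two facts that make the induction go through are, first, that the induced-subgraph relation is transitive, so from $G[S] \cong H$ together with an induced copy of $H'$ inside $H$ one obtains an induced copy of $H'$ inside $G$; and second, that a non-isomorphic induced subgraph is strictly smaller, i.e.\ $\lvert V(H') \rvert < \lvert V(H) \rvert$ (for if an induced copy of $H'$ used all of $V(H)$, it would equal $H$, contradicting $H' \not\cong H$). Applying the induction hypothesis to $H'$ then yields the desired induced copy of a member of $\elm(\mathcal{H})$ in $G$.

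The only point requiring care, and the nearest thing to an obstacle, is justifying that this descent terminates: one must note that $\lvert V(\cdot) \rvert$ is a non-negative integer-valued measure that strictly decreases at each step, so the induction is well-founded. Equivalently, the reverse direction can be phrased directly by selecting, among all members of $\mathcal{H}$ having an induced copy in $G$, one whose vertex set has minimum cardinality, and then checking that this minimality forces it to satisfy the defining condition of $\elm(\mathcal{H})$.
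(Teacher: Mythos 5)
Your proof is correct. The paper gives no argument at all for this proposition---it is stated as following ``directly'' from the definition of $\elm(\mathcal{H})$---and your argument (the forward direction from $\elm(\mathcal{H}) \subseteq \mathcal{H}$, and the reverse direction by descent to a vertex-minimal member of $\mathcal{H}$ induced in $G$, which minimality forces into $\elm(\mathcal{H})$) is exactly the routine justification the paper is implicitly invoking, so you have simply filled in the omitted details.
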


We call an $\mathcal{H}$-free graph $G$, \emph{strongly $\mathcal{H}$-free} if any $G$-contraction is $\mathcal{H}$-free. Furthermore, an $\mathcal{H}$-exist graph $G$ is a \emph{critically $\mathcal{H}$-exist} if any $G$-contraction is $\mathcal{H}$-free.
If we add any number of isolated vertices to a strongly $H$-free or critically $H$-exist graph, we obtain a graph with same property. Thus, from this section and forward, we exclude graphs having isolated vertices unless otherwise stated.

We conclude directly the following.
\begin{proposition}{\label{Theore: forbidden}}
	Let $\mathcal{H}$ be a set of graphs and $G$ be a graph where $G$ is neither critically $\mathcal{H}$-exist nor $\mathcal{H}$-free but not strongly $\mathcal{H}$-free. The graph $G$ is $\mathcal{H}$-free if and only if any $G$-contraction is $\mathcal{H}$-free.
\end{proposition}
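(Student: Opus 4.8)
The plan is to reduce the statement to a short propositional argument by naming the two relevant facts. Write $A$ for the assertion ``$G$ is $\mathcal{H}$-free'' and $B$ for ``every $G$-contraction is $\mathcal{H}$-free''; the conclusion to be proved is exactly $A \Leftrightarrow B$. First I would unwind the two excluded properties into these terms. By definition a graph is \emph{critically $\mathcal{H}$-exist} precisely when it is $\mathcal{H}$-exist and all of its contractions are $\mathcal{H}$-free, i.e.\ when $\neg A \wedge B$ holds. Likewise, since \emph{strongly $\mathcal{H}$-free} means $A \wedge B$, the compound property ``$\mathcal{H}$-free but not strongly $\mathcal{H}$-free'' unwinds to $A \wedge \neg(A \wedge B)$, which simplifies to $A \wedge \neg B$.

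With this dictionary, the hypothesis that $G$ is neither of these two properties says $\neg(\neg A \wedge B)$ and $\neg(A \wedge \neg B)$. The first negation is equivalent to $B \Rightarrow A$ and the second to $A \Rightarrow B$, so together they yield $A \Leftrightarrow B$, which is the claim. I would present this concretely as a two-direction case analysis rather than a truth-table manipulation. If $G$ is $\mathcal{H}$-free, then because it is \emph{not} ``$\mathcal{H}$-free but not strongly $\mathcal{H}$-free'' it must in fact be strongly $\mathcal{H}$-free, so by definition every $G$-contraction is $\mathcal{H}$-free. Conversely, suppose every $G$-contraction is $\mathcal{H}$-free; if $G$ were $\mathcal{H}$-exist it would then be critically $\mathcal{H}$-exist, contradicting the hypothesis, so $G$ is $\mathcal{H}$-free.

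There is no genuine mathematical obstacle here; the statement is essentially a bookkeeping fact expressing that the four combinations of truth values of $A$ and $B$ split into the two ``aligned'' cases $A\wedge B$ and $\neg A \wedge \neg B$, where the equivalence holds, and the two ``mixed'' cases, which are precisely strong-freeness failure and critical existence and which the hypothesis removes. The only point demanding care is parsing the compound hypothesis correctly, namely reading it as ``neither critically $\mathcal{H}$-exist nor ($\mathcal{H}$-free but not strongly $\mathcal{H}$-free)'', and remembering that \emph{strongly $\mathcal{H}$-free} already entails \emph{$\mathcal{H}$-free}, so that the negated second clause collapses cleanly to $A \wedge \neg B$ as above.
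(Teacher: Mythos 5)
Your proposal is correct and matches the paper's treatment: the paper states this proposition without any proof (``We conclude directly the following''), regarding it as an immediate consequence of the definitions, and your propositional unwinding --- ruling out the two mixed cases $\neg A \wedge B$ (critically $\mathcal{H}$-exist) and $A \wedge \neg B$ ($\mathcal{H}$-free but not strongly $\mathcal{H}$-free) so that only $A \wedge B$ and $\neg A \wedge \neg B$ remain --- is precisely that immediate argument, including the correct parsing of the hypothesis as ``neither critically $\mathcal{H}$-exist nor ($\mathcal{H}$-free but not strongly $\mathcal{H}$-free)''.
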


Given a graph $G$ and a set of graphs $\mathcal{H}$, we call $G$ \emph{$\mathcal{H}$-split} if there is a $G$-contraction isomorphic to a graph in $\mathcal{H}$. Furthermore, $G$ is \emph{$\mathcal{H}$-free-split} if $G$ is $\mathcal{H}$-split and $\mathcal{H}$-free. Moreover, the set of all $\mathcal{H}$-free-split graphs, for a given $\mathcal{H}$, is denoted by \emph{$\fs(\mathcal{H})$}.

\begin{proposition}{\label{Theorem: key}}
	Let $\mathcal{H}$ be a set of graphs and $G$ be a $\mathcal{H}$-free graph. Then $G$ is strongly $\mathcal{H}$-free if and only if $G$ is $\fs(\mathcal{H})$-free.
\end{proposition}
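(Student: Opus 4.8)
The plan is to prove both implications by contraposition, resting on a single compatibility observation: contracting an edge inside an induced subgraph commutes with passing to the induced subgraph inside the contracted graph. Precisely, I would first record the following. Let $e=uv\in E(G)$ and let $w$ denote the vertex of $G/e$ obtained by identifying $u$ and $v$. If $T\subseteq V(G/e)$ with $w\notin T$, then $T\subseteq V(G)$ and $(G/e)[T]=G[T]$, since contraction alters no adjacency among vertices distinct from $w$. If instead $w\in T$, set $T'=(T\setminus\{w\})\cup\{u,v\}$; then $uv\in E(G[T'])$ and $(G[T'])/uv\cong (G/e)[T]$, because the identified vertex is adjacent in both graphs to exactly those $x\in T\setminus\{w\}$ lying in $N_G(u)\cup N_G(v)$, while all remaining adjacencies are inherited unchanged from $G$. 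This observation is the workhorse for both directions.

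For the forward direction I would argue the contrapositive: assume $G$ is not $\fs(\mathcal{H})$-free, so some induced subgraph $G[S]$ is isomorphic to an $F\in\fs(\mathcal{H})$. By definition $F$ is $\mathcal{H}$-split, hence has an edge $f$ with $F/f$ isomorphic to some $H\in\mathcal{H}$. Transporting $f$ through the isomorphism yields an edge $e=uv$ of $G[S]$, hence of $G$, with $(G[S])/e\cong F/f\cong H$. Applying the observation to this $e$ with $T=(S\setminus\{u,v\})\cup\{w\}$ gives $(G/e)[T]\cong (G[S])/e\cong H$, so $G/e$ is $\mathcal{H}$-exist and $G$ is not strongly $\mathcal{H}$-free.

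For the converse I again use contraposition. Assume $G$ is $\mathcal{H}$-free but not strongly $\mathcal{H}$-free, so some $G/e$ with $e=uv$ contains an induced copy of a graph $H\in\mathcal{H}$, say on vertex set $T$. If $w\notin T$, the observation gives $G[T]=(G/e)[T]\cong H$, contradicting that $G$ is $\mathcal{H}$-free; hence $w\in T$. Setting $T'=(T\setminus\{w\})\cup\{u,v\}$, the observation gives $(G[T'])/uv\cong (G/e)[T]\cong H$, so $G[T']$ is $\mathcal{H}$-split. As an induced subgraph of an $\mathcal{H}$-free graph, $G[T']$ is itself $\mathcal{H}$-free, whence $G[T']\in\fs(\mathcal{H})$; since $G[T']$ is an induced subgraph of $G$, the graph $G$ is not $\fs(\mathcal{H})$-free, closing the contrapositive.

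I expect the only genuine obstacle to be verifying the compatibility observation cleanly — in particular, checking that no adjacency is accidentally created or destroyed when passing between $(G[T'])/uv$ and $(G/e)[T]$, which hinges on $G[T']$ (and $G[S]$) being \emph{induced}, so that the neighborhoods used by the contraction agree with those computed in $G$. Once that bookkeeping is settled, both directions follow immediately, and the case distinction on whether the contracted vertex $w$ lies in the forbidden set $T$ is exactly what forces the appearance of a genuinely $\mathcal{H}$-free-split induced subgraph rather than a forbidden induced subgraph of $G$ itself.
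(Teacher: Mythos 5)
Your proof is correct and takes essentially the same route as the paper's: the forward direction transports an edge contraction inside an induced $\mathcal{H}$-free-split subgraph into a contraction of $G$ itself, and the converse pulls the forbidden induced subgraph of $G/e$ back to an induced $\mathcal{H}$-free-split subgraph of $G$ (your case split on whether the contracted vertex $w$ lies in $T$ is exactly what the paper's choice of a minimal set $U$ encodes implicitly). The only difference is presentational: you argue by contraposition and spell out the compatibility observation and the explicit pullback set $T'$, details the paper leaves to the reader.
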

\begin{proof}
	Assume for the sake of contradiction that there exists a strongly $\mathcal{H}$-free graph $G$ with an induced $\mathcal{H}$-free-split subgraph $J$. Consequently, there is an edge $e$ in $J$ such that $J/e$ induces a graph in $\mathcal{H}$. As a result, $G/e$ is $\mathcal{H}$-exist, which contradicts the fact that $G$ is strongly $\mathcal{H}$-free. 
	
	In contrast, if $G$ is an $\mathcal{H}$-free but not a strongly $\mathcal{H}$-free, then there is a set $U \subseteq V(G)$ such that there is an edge $e \in E(G[U])$ where $G/e$ is $\mathcal{H}$-exist. Let $U$ be a minimum set with such a property. Thus $G[U]$ is $\mathcal{H}$-free-split.
\end{proof}

From Propositions \ref{Theore: forbidden} and \ref{Theorem: key}, we deduce the following.
\begin{theorem}{\label{Theorem: characeterization}}
	Let $\mathcal{H}$ be a set of graphs and $G$ be a $\fs(\mathcal{H})$-free graph where $G$ is not critically $\mathcal{H}$-exist. The graph $G$ is $\mathcal{H}$-free if and only if any $G$-contraction is $\mathcal{H}$-free.
\end{theorem}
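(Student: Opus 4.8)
The plan is to deduce the theorem directly from Propositions~\ref{Theore: forbidden} and~\ref{Theorem: key} by checking that the stated hypotheses force the hypotheses of Proposition~\ref{Theore: forbidden}. Recall that Proposition~\ref{Theore: forbidden} already delivers exactly the desired equivalence for every graph that is neither critically $\mathcal{H}$-exist nor (simultaneously $\mathcal{H}$-free and not strongly $\mathcal{H}$-free). Since the present theorem assumes that $G$ is not critically $\mathcal{H}$-exist, one of those two excluded configurations is ruled out by hypothesis, and the whole task reduces to excluding the other one, namely the possibility that $G$ is $\mathcal{H}$-free yet fails to be strongly $\mathcal{H}$-free.

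To carry this out I would split on whether $G$ is $\mathcal{H}$-free. If $G$ is $\mathcal{H}$-exist, then it is vacuously not ``$\mathcal{H}$-free but not strongly $\mathcal{H}$-free,'' so both conditions of Proposition~\ref{Theore: forbidden} hold. The substantive case is when $G$ is $\mathcal{H}$-free: here I would invoke Proposition~\ref{Theorem: key}. Because $G$ is assumed to be $\fs(\mathcal{H})$-free and is now $\mathcal{H}$-free, Proposition~\ref{Theorem: key} upgrades this to the conclusion that $G$ is strongly $\mathcal{H}$-free. Consequently $G$ is not ``$\mathcal{H}$-free but not strongly $\mathcal{H}$-free,'' and again both hypotheses of Proposition~\ref{Theore: forbidden} are satisfied. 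In either case, applying Proposition~\ref{Theore: forbidden} yields that $G$ is $\mathcal{H}$-free if and only if every $G$-contraction is $\mathcal{H}$-free, which is the claim.

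I do not expect a genuine combinatorial obstacle, since all of the graph-theoretic substance is already absorbed into the two cited results; the $\fs(\mathcal{H})$-freeness assumption is used precisely to let Proposition~\ref{Theorem: key} convert $\mathcal{H}$-freeness into strong $\mathcal{H}$-freeness. The only real care required is bookkeeping: correctly reading the compound negation in the hypothesis of Proposition~\ref{Theore: forbidden} as the conjunction ``not critically $\mathcal{H}$-exist'' and ``not ($\mathcal{H}$-free but not strongly $\mathcal{H}$-free),'' and verifying that exactly these two clauses are what the theorem's hypotheses guarantee.
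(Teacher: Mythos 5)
Your proposal is correct and follows exactly the route the paper takes: the paper's entire proof is the remark that the theorem follows from Propositions~\ref{Theore: forbidden} and~\ref{Theorem: key}, and your case split (using Proposition~\ref{Theorem: key} to upgrade $\mathcal{H}$-freeness to strong $\mathcal{H}$-freeness when $G$ is $\mathcal{H}$-free, and vacuity otherwise) is precisely the intended deduction. Your careful parsing of the compound negation in Proposition~\ref{Theore: forbidden} is the only nontrivial bookkeeping, and you handle it correctly.
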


Theorem \ref{Theorem: characeterization} provides a sufficient and necessary condition that answers the question we investigate in this paper, however, it translates this problem to a new one, namely determining characterizations for critically $\mathcal{H}$-exist and $\mathcal{H}$-free-split graphs for a set of graphs $\mathcal{H}$. In Sections \ref{Section: CC(H)} and \ref{Section: EC(H)}, we present some properties for these families of graphs.

\subsection{The \texorpdfstring{$\mathcal{H}$}{H}-Split Graphs}{\label{Section: CC(H)}}
Let $H$ be a graph with $v \in V(H)$ and $N_{H}(v) = U \cup W$. The \emph{$splitting(H,v,U,W)$} is the graph obtained from $H$ by removing $v$ and adding two vertices $u$ and $w$ where $ N_{H}(u)= U \cup \{w\}$ and $ N_{H}(w)= W \cup \{u\}$. Furthermore, \emph{$splitting(H,v)$} is the set of all graphs for any possible $U$ and $W$. Moreover, \emph{$splitting(H)$} is the union of the $splitting(H,v)$ for any vertex $v \in V(H)$. Given a set of graphs $\mathcal{H}$, \emph{$splitting(\mathcal{H})$} is the union of the splittings of every graph in $\mathcal{H}$.

\begin{theorem}{\label{Theorem: CC=H[]}}
	For a graph $G$ and a set of graphs $\mathcal{H}$, $G$ is an $\mathcal{H}$-split if and only if $G \in splitting(\mathcal{H})$.
\end{theorem}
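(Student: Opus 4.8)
The plan is to prove both directions by carefully unwinding the definitions of $\mathcal{H}$-split and of the $splitting$ operation, exhibiting an explicit correspondence between an edge $e$ whose contraction produces a copy of some $H \in \mathcal{H}$ and a vertex $v \in V(H)$ together with a bipartition of its neighborhood. The central observation is that contracting an edge $e = xy$ and splitting a vertex $v$ are inverse operations: if $G/e \cong H$, then the contracted vertex $w$ in $H$ is exactly the vertex that ``remembers'' the merged endpoints $x$ and $y$, and its neighborhood $N_H(w)$ is the union of the two sets into which we must split.

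For the forward direction, I would assume $G$ is $\mathcal{H}$-split, so there is an edge $e = xy \in E(G)$ with $G/e \cong H$ for some $H \in \mathcal{H}$, via an isomorphism $f$. Let $v := f(w)$ be the image of the merged vertex $w$ of $G/e$. The key step is to show $G \cong splitting(H, v, U, W)$ for the choice $U := f(N_G(x) \setminus \{y\})$ and $W := f(N_G(y) \setminus \{x\})$. I would verify this by checking that the vertex $x$ plays the role of the new vertex $u$ (adjacent to $U \cup \{w\}$, i.e.\ to $N_G(x)$) and $y$ plays the role of $w$ (adjacent to $W \cup \{u\}$). The adjacencies among all other vertices are preserved by contraction, so $f$ restricted to $V(G) \setminus \{x,y\}$ already gives the needed isomorphism on the untouched part, and the two added vertices match by construction. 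Hence $G \in splitting(H,v) \subseteq splitting(\mathcal{H})$.

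For the reverse direction, I would assume $G \cong splitting(H, v, U, W)$ for some $H \in \mathcal{H}$, vertex $v \in V(H)$, and bipartition $N_H(v) = U \cup W$; here $G$ has the two special vertices $u$ and $w$ with $N(u) = U \cup \{w\}$ and $N(w) = W \cup \{u\}$. The claim is that contracting the edge $uw$ recovers $H$. Contracting $uw$ merges $u$ and $w$ into a single vertex whose neighborhood becomes $(N(u) \cup N(w)) \setminus \{u,w\} = U \cup W = N_H(v)$, with all other adjacencies unchanged; this is precisely the adjacency structure of $v$ in $H$. So $G/uw \cong H \in \mathcal{H}$, witnessing that $G$ is $\mathcal{H}$-split.

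The main obstacle will be bookkeeping of the corner cases in the edge $uw$ itself and the possibility that $U$ and $W$ overlap in the intended sense: since the paper works with simple graphs, I must confirm that the splitting always produces a well-defined simple graph (in particular that $u$ and $w$ are joined by exactly one edge and that the resulting graph has no multi-edges) and, conversely, that contraction of $uw$ cannot create a multi-edge that would spoil the isomorphism to $H$. A common neighbor $z$ of both $u$ and $w$ would be a candidate for such a problem, but in a simple graph contraction merely identifies the two parallel edges into one, so $z$ remains a single neighbor of the merged vertex; I would note this explicitly so that the adjacency computation $N_H(v) = U \cup W$ goes through even when $U \cap W \neq \emptyset$. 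Once this is handled, the two directions close the proof.
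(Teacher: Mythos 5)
Your proposal is correct and follows essentially the same route as the paper's proof: both directions rest on the observation that contracting the edge $uw$ and splitting the vertex $v$ are inverse operations, with the bipartition $U \cup W$ of $N_H(v)$ read off from the two endpoints' neighborhoods. Your write-up is simply a more detailed version of the paper's argument (explicitly tracking the isomorphism and the simple-graph bookkeeping that the paper leaves implicit), so no further changes are needed.
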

\begin{proof}
	Let $G$ be an $\mathcal{H}$-split. Hence there is a graph $H \in \mathcal{H}$ such that $G$ is $H$-split.
	Thus, there are two vertices $u,w \in V(G)$ such that $G/uw$ is isomorphic to $H$. Let $x := V(G/uw)-V(G)$, then $N_{G/uw}(x) = (N_{G}(u) \cup N_{G}(w)) - \{u,w\}$. As a result, $G \in splitting(H,x)$. Consequently, $G \in splitting(\mathcal{H})$.

	Conversely, let $G \in splitting(\mathcal{H})$. Hence there is a graph $H \in \mathcal{H}$ such that $G \in splitting(H)$.
	Thus, there are two adjacent vertices $u,w \in V(G)$ such that $G/uw \cong H$. Thus, $G$ is $\mathcal{H}$-split.
\end{proof}

For a set of graphs $\mathcal{H}$ and using Theorem \ref{Theorem: CC=H[]}, we can use $splitting(\mathcal{H})$ to construct all $\mathcal{H}$-split graphs, consequently $\mathcal{H}$-free-split graphs.

\begin{proposition}
	In a graph $G$, let $u,v \in V(G)$. If $u$ is similar to $v$, then $splitting(G,u) = splitting(G,v)$.
\end{proposition}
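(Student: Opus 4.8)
The plan is to use the defining automorphism directly. Let $\phi\in\mathrm{Aut}(G)$ be an automorphism with $\phi(u)=v$, which exists precisely because $u$ and $v$ are similar. The first fact I would record is that $\phi$ restricts to a bijection from $N_G(u)$ onto $N_G(v)$: for any vertex $x$ we have $x\in N_G(u)$ iff $ux\in E(G)$ iff $\phi(u)\phi(x)=v\phi(x)\in E(G)$ iff $\phi(x)\in N_G(v)$. I would also note that, since $\phi$ is a bijection carrying $u$ to $v$, it maps $V(G)\setminus\{u\}$ bijectively onto $V(G)\setminus\{v\}$.

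Since $\phi^{-1}$ is an automorphism carrying $v$ to $u$, the two inclusions are symmetric, so it suffices to show $splitting(G,u)\subseteq splitting(G,v)$. I would take an arbitrary member $S=splitting(G,u,U,W)$, where $N_G(u)=U\cup W$, and set $U':=\phi(U)$ and $W':=\phi(W)$. By the first fact, $N_G(v)=\phi(N_G(u))=\phi(U)\cup\phi(W)=U'\cup W'$, so $(U',W')$ is an admissible pair for splitting $v$, and $T:=splitting(G,v,U',W')$ is a well-defined member of $splitting(G,v)$. The goal then reduces to showing $S\cong T$.

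The core step is to build the isomorphism explicitly. Write the two new vertices of $S$ as $u_1,u_2$ (with $u_1$ adjacent to $U\cup\{u_2\}$ and $u_2$ adjacent to $W\cup\{u_1\}$) and those of $T$ as $v_1,v_2$ (with $v_1$ adjacent to $U'\cup\{v_2\}$ and $v_2$ adjacent to $W'\cup\{v_1\}$). I would define $\psi$ by $\psi(x)=\phi(x)$ for $x\in V(G)\setminus\{u\}$, together with $\psi(u_1)=v_1$ and $\psi(u_2)=v_2$; this is a bijection because $\phi$ sends $V(G)\setminus\{u\}$ onto $V(G)\setminus\{v\}$ and the new vertices are matched up. Then I would verify edge preservation by cases: (i) for $x,y\in V(G)\setminus\{u\}$, both $S$ and $T$ agree with $G$ on such pairs, so $\phi$ being an automorphism settles it; (ii) $u_1u_2$ is an edge of $S$ and $v_1v_2=\psi(u_1)\psi(u_2)$ is an edge of $T$; (iii) for $x\in V(G)\setminus\{u\}$, the pair $u_1x$ is an edge of $S$ exactly when $x\in U$, which by definition of $U'$ happens exactly when $\phi(x)\in U'$, i.e. exactly when $v_1\phi(x)$ is an edge of $T$; case (iv) for $u_2$ is symmetric via $W,W'$. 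This casework never assumes $U\cap W=\emptyset$, so it covers the general reading of $N_G(u)=U\cup W$.

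The main point to be careful about is not a deep obstacle but a convention: the sets $splitting(G,u)$ and $splitting(G,v)$ must be read up to isomorphism (as they are wherever $splitting$ appears, e.g. through $G/uw\cong H$ in Theorem \ref{Theorem: CC=H[]}), since the underlying vertex sets of $S$ and $T$ literally differ, one retaining $v$ and the other $u$. With that convention $\psi$ witnesses $S\in splitting(G,v)$, giving $splitting(G,u)\subseteq splitting(G,v)$, and applying the same argument to $\phi^{-1}$ yields the reverse inclusion and hence equality. The one place in the verification that genuinely needs the full strength of the hypothesis is the reflection of non-edges at $u_1,u_2$ (a vertex outside $U$ must map outside $U'$), which works only because $\phi$ restricts to a \emph{bijection} of $N_G(u)$ onto $N_G(v)$ rather than merely a map into it.
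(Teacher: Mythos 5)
Your proof is correct. Note that the paper offers no proof of this proposition at all (it is one of the statements the authors treat as apparent), and your argument---carrying the pair $(U,W)$ to $(\phi(U),\phi(W))$ along an automorphism $\phi$ with $\phi(u)=v$ and extending $\phi$ to an explicit isomorphism between the two splittings---is exactly the natural argument being left implicit. Your two added observations are also the right ones to make explicit: the equality $splitting(G,u)=splitting(G,v)$ only makes sense with graphs read up to isomorphism, and injectivity of $\phi$ is genuinely needed so that vertices outside $U$ map outside $\phi(U)$, which is what makes the non-edges at the two new vertices correspond.
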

By the previous proposition, for a graph $H$, the steps to construct the $H$-free-split graphs are:
\begin{itemize}
	\item Let $\pi$ be the partition of $V(H)$ induced by the orbits generated from $Aut(H)$;
	\item for every orbit $o \in \pi$, we choose a vertex $v \in o$; and
	\item construct $splitting(H,v)$.
\end{itemize}

\begin{proposition}\label{pro: split graphs that are not free}
	Let $G$ be a graph, $v$ a vertex in $V(G)$ where $N_{G}(v) = U \cup W$. If $U=N_{G}(v)$ or $W=N_{G}(v)$, then $splitting(G,v,U,W)$ is not $G$-free-split.
\end{proposition}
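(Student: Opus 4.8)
The plan is first to observe that $J := splitting(G,v,U,W)$ is automatically $\{G\}$-split, so the proposition reduces to showing that $J$ is \emph{not} $\{G\}$-free. Indeed, $J \in splitting(G) = splitting(\{G\})$ by definition, so Theorem \ref{Theorem: CC=H[]} yields that $J$ is $\{G\}$-split; alternatively one checks directly that contracting $uw$ produces a vertex adjacent to exactly $(N_J(u) \cup N_J(w)) - \{u,w\} = U \cup W = N_G(v)$, i.e. $J/uw \cong G$. Since the split clause holds unconditionally, $J$ fails to be $G$-free-split precisely when it contains an induced copy of $G$, and exhibiting such a copy is all that remains.

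By the symmetry $u \leftrightarrow w$, $U \leftrightarrow W$ in the definition of the splitting operation, I would assume without loss of generality that $U = N_G(v)$. The candidate induced copy of $G$ is the subgraph $J - w$ on the vertex set $(V(G) - \{v\}) \cup \{u\}$, and the candidate isomorphism $\varphi \colon J - w \to G$ sends $u \mapsto v$ and fixes every vertex of $V(G) - \{v\}$.

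To confirm that $\varphi$ is an isomorphism I would verify adjacencies in three parts. Edges with both endpoints in $V(G) - \{v\}$ are left untouched by the splitting, hence agree in $J - w$ and in $G$. Next, in $J - w$ the vertex $u$ has neighborhood $N_J(u) - \{w\} = U = N_G(v)$, matching the neighborhood of $v = \varphi(u)$ in $G$. Finally, I must make sure that deleting $w$ destroys no adjacency demanded by $G$: a vertex $a \in W$ loses its edge to $w$, but $W \subseteq U = N_G(v)$ forces $a \in U$, so $a$ remains adjacent to $u$ in $J - w$. Thus every neighbor of $v$ in $G$ survives as a neighbor of $u$, and no spurious neighbor is created.

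The delicate point is this last check, and it is exactly where the hypothesis $U = N_G(v)$ is indispensable: were some neighbor of $v$ to lie only in $W$ (the situation of a genuinely balanced split), that vertex would become nonadjacent to $u$ once $w$ is removed, and $J - w$ would be a proper subgraph of $G$ rather than an isomorphic copy --- so the argument would, and should, break down there. With the three checks in hand, $J - w \cong G$ supplies the required induced copy, witnessing that $J$ is $\{G\}$-exist and hence not $G$-free-split. The case $W = N_G(v)$ is identical after interchanging the roles of $u$ and $w$.
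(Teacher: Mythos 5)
Your proof is correct and coincides with the paper's intended reasoning: the paper states this proposition without proof (it is one of the claims deemed apparent), and the apparent argument is exactly yours --- when $U = N_{G}(v)$ we have $W \subseteq U$, so deleting $w$ from $splitting(G,v,U,W)$ leaves an induced subgraph isomorphic to $G$ with $u$ playing the role of $v$, whence the splitting is $G$-exist and thus not $G$-free-split. Your identification of where the hypothesis $U=N_{G}(v)$ is used (a neighbor lying only in $W$ would be lost) is also the right observation, and the case $W=N_{G}(v)$ follows by the symmetry of the construction.
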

\begin{proposition}\label{pro: split graphs that are not free with degree one}
	Let $G$ be a graph and $v$ a vertex in $V(G)$. If $deg(v)=1$, then $splitting(G,v)$ contains no $G$-free-split graph.
\end{proposition}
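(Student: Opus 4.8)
The plan is to obtain this as an immediate corollary of Proposition~\ref{pro: split graphs that are not free}. The whole content is the observation that, for a degree-one vertex, every admissible way of distributing its neighborhood between the two new vertices forces one of the two parts to equal the entire neighborhood, which is exactly the hypothesis of that proposition.

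First I would fix notation: since $deg(v)=1$, write $N_{G}(v)=\{a\}$ for the unique neighbor $a$ of $v$. Next I would recall that, by definition, every graph in $splitting(G,v)$ equals $splitting(G,v,U,W)$ for some pair $(U,W)$ satisfying $N_{G}(v)=U\cup W$; in particular $U,W\subseteq N_{G}(v)=\{a\}$.

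The key step is the set-theoretic observation that any such pair satisfies $U=N_{G}(v)$ or $W=N_{G}(v)$. Indeed, from $U\cup W=\{a\}$ the neighbor $a$ lies in $U$ or in $W$, and whichever of these two subsets of $\{a\}$ contains $a$ must equal $\{a\}=N_{G}(v)$. Applying Proposition~\ref{pro: split graphs that are not free} to this pair $(U,W)$ then shows that $splitting(G,v,U,W)$ is not $G$-free-split. Since the pair was arbitrary, no member of $splitting(G,v)$ is $G$-free-split, which is precisely the assertion.

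The only point requiring care---and the place where a careless argument might leave a gap---is the completeness of the case distinction: the definition of $splitting$ does not require $U$ and $W$ to partition $N_{G}(v)$, so in principle $a$ could belong to both. The singleton neighborhood nonetheless collapses all three possibilities $(\{a\},\emptyset)$, $(\emptyset,\{a\})$ and $(\{a\},\{a\})$ into cases already covered by the cited proposition, so there is in fact no genuine obstacle and the statement follows as a direct corollary.
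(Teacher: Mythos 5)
Your proof is correct and follows exactly the route the paper intends: the paper leaves this proposition unproved as an ``apparent'' consequence, and the evident argument is precisely yours---for a degree-one vertex every admissible pair $(U,W)$ with $U\cup W=N_{G}(v)$ has $U=N_{G}(v)$ or $W=N_{G}(v)$, so Proposition~\ref{pro: split graphs that are not free} applies to every member of $splitting(G,v)$. Your attention to the non-partition case $U=W=\{a\}$ is a worthwhile bit of care, but it changes nothing in substance.
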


\begin{proposition}\label{pro: no free-split for paths}
	If $G$ is a path, then $splitting(G)$ contains no $G$-free-split graph.
\end{proposition}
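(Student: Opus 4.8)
The plan is to reduce the claim to a statement about induced subgraphs. By Theorem~\ref{Theorem: CC=H[]} applied with $\mathcal{H} = \{G\}$, every graph $J \in splitting(G)$ is $G$-split; hence such a $J$ is $G$-free-split precisely when it is $G$-free. Therefore it suffices to show that each $J \in splitting(G)$ either falls under Proposition~\ref{pro: split graphs that are not free} or Proposition~\ref{pro: split graphs that are not free with degree one}, or else contains an induced copy of $G$, so that $J$ is $G$-exist. Write $G = P_n$ with vertices $v_1, v_2, \dots, v_n$ and edges $v_j v_{j+1}$, and consider an arbitrary member $J = splitting(G, v_i, U, W)$, where $U \cup W = N_{G}(v_i)$.

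First I would dispose of the endpoints. If $i \in \{1, n\}$, then $deg(v_i) = 1$, and Proposition~\ref{pro: split graphs that are not free with degree one} already guarantees that $splitting(G, v_i)$ contains no $G$-free-split graph. So I may assume $v_i$ is an internal vertex, whence $N_{G}(v_i) = \{v_{i-1}, v_{i+1}\}$ has exactly two elements.

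Next I would enumerate the choices of $U$ and $W$. Since $U \cup W = \{v_{i-1}, v_{i+1}\}$, either one of $U, W$ equals $N_{G}(v_i)$ --- in which case Proposition~\ref{pro: split graphs that are not free} shows $J$ is not $G$-free-split --- or else $\{U, W\} = \{\{v_{i-1}\}, \{v_{i+1}\}\}$. In this last, genuine, case the splitting replaces $v_i$ by two adjacent vertices $u, w$ with $N(u) = \{v_{i-1}, w\}$ and $N(w) = \{v_{i+1}, u\}$, so that $J$ is the path $v_1 - \cdots - v_{i-1} - u - w - v_{i+1} - \cdots - v_n$; that is, $J \cong P_{n+1}$. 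Deleting an endpoint of $P_{n+1}$ leaves an induced $P_n \cong G$, so $J$ is $G$-exist and thus not $G$-free-split. This exhausts all cases.

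The argument is essentially a finite case analysis, so the only real work is making the enumeration of $U, W$ exhaustive. In particular, I would confirm that, because an internal vertex of a path has degree exactly two, every choice with $U \neq N_{G}(v_i) \neq W$ is the single balanced split yielding $P_{n+1}$, and that any overlapping choice $U \cap W \neq \emptyset$ forces $U = N_{G}(v_i)$ or $W = N_{G}(v_i)$ and is therefore already covered. I expect this bookkeeping, together with the one-line observation that $P_{n+1}$ contains an induced $P_n$, to be the only place where care is needed; there is no substantial analytic obstacle beyond the case organization.
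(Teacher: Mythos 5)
Your proof is correct; the paper states this proposition without proof (treating it as apparent), and your argument --- endpoints via Proposition~\ref{pro: split graphs that are not free with degree one}, the choices where $U$ or $W$ is all of $N_G(v_i)$ via Proposition~\ref{pro: split graphs that are not free}, and the one genuinely remaining split of an internal vertex yielding $P_{n+1}$, which contains an induced $P_n$ --- is exactly the natural argument the paper intends, with the case enumeration of $U,W$ carried out correctly. The only microscopic loose end is the one-vertex path $P_1$ (whose sole vertex has degree $0$, so your endpoint case does not literally apply), but that graph is excluded by the paper's standing convention that graphs with isolated vertices are not considered.
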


\begin{proposition}\label{pro: only one free-split for cycles}
	If $G$ is a $C_{n}$ for an integer $n \geq 3$, then the $G$-free-split is $C_{n+1}$.
\end{proposition}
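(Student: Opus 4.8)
The plan is to combine Theorem~\ref{Theorem: CC=H[]} with the symmetry of the cycle to reduce the claim to a finite case analysis. Setting $\mathcal{H} = \{C_n\}$, the $C_n$-split graphs are exactly the members of $splitting(C_n)$, and the $C_n$-free-split graphs are those members that are additionally $C_n$-free. Since $Aut(C_n)$ is the dihedral group, which acts transitively on $V(C_n)$, all vertices of $C_n$ are similar; by the preceding proposition asserting that similar vertices yield identical splittings, it suffices to analyze $splitting(C_n,v)$ for a single fixed vertex $v$.

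Next I would enumerate the candidate splittings. The vertex $v$ has exactly two neighbors, say $a$ and $b$, so $N(v) = \{a,b\}$, and $splitting(C_n,v)$ consists of the graphs $splitting(C_n,v,U,W)$ over all pairs with $U \cup W = \{a,b\}$. By Proposition~\ref{pro: split graphs that are not free}, any pair with $U = N(v)$ or $W = N(v)$ produces a graph that is not $C_n$-free-split, so these can be discarded. A short check shows this eliminates every pair with $U \cap W \neq \emptyset$: if $a \in U \cap W$, then forcing $U \cup W = \{a,b\}$ requires $b$ to lie in $U$ or in $W$, making one of them equal to $\{a,b\} = N(v)$. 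Hence the only surviving possibilities are $\{U,W\} = \{\{a\},\{b\}\}$.

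Finally I would identify the resulting graph and verify freeness. Splitting $v$ into $u$ and $w$ with $N(u) = \{a,w\}$ and $N(w) = \{b,u\}$ replaces the path $a\,v\,b$ in the cycle by $a\,u\,w\,b$, so the resulting graph is $C_{n+1}$; the two surviving pairs give isomorphic graphs. It then remains to confirm that $C_{n+1}$ is $C_n$-free: any induced subgraph on at most $n$ vertices arises by deleting at least one vertex from $C_{n+1}$, which yields a disjoint union of paths and hence cannot be the connected $2$-regular graph $C_n$. Thus $C_{n+1}$ is $C_n$-split and $C_n$-free, i.e.\ $C_n$-free-split, and it is the unique such graph. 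The only genuine obstacle is ensuring the enumeration over $(U,W)$ is exhaustive once overlaps are permitted, but Proposition~\ref{pro: split graphs that are not free} disposes of precisely those overlapping cases, so the analysis stays short.
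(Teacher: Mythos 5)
Your proof is correct and follows precisely the route the paper intends: the paper states this proposition without proof (treating it as apparent from the machinery just developed), and that machinery --- Theorem~\ref{Theorem: CC=H[]}, the proposition on similar vertices combined with the vertex-transitivity of $C_{n}$, and Proposition~\ref{pro: split graphs that are not free} to discard the decompositions with $U=N(v)$ or $W=N(v)$ --- is exactly what you invoke. Your enumeration of the pairs $(U,W)$ is exhaustive, and the final check that $C_{n+1}$ is $C_{n}$-free (every proper induced subgraph is a disjoint union of paths) closes the argument correctly.
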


\subsection{Critically \texorpdfstring{$\mathcal{H}$}{H}-Exist Graphs}{\label{Section: EC(H)}}

\begin{theorem}{\label{Theorem: critical H-exist: not S is independent}}
	Let $G$ be a graph and $\mathcal{H}$ be a set of graphs.
	If $G$ is a critically $\mathcal{H}$-exist, then for any $S \subseteq V(G)$ such that $G[S]$ is isomorphic to a graph in $\mathcal{H}$, the followings properties hold:
	\begin{enumerate}
		\item $V(G) \setminus S$ is independent and
		\item there is no corner in $V(G) \setminus S$ that is dominated by a vertex in $S$.
	\end{enumerate}
\end{theorem}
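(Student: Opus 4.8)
The plan is to argue by contradiction in both parts, exploiting the defining feature of a critically $\mathcal{H}$-exist graph: \emph{every} $G$-contraction is $\mathcal{H}$-free. Hence, whenever one of the properties fails, it suffices to exhibit a single edge $e \in E(G)$ for which $G/e$ still contains an induced subgraph isomorphic to $G[S] \cong H \in \mathcal{H}$; this makes $G/e$ an $\mathcal{H}$-exist contraction, the desired contradiction. Throughout, fix $S \subseteq V(G)$ with $G[S] \cong H \in \mathcal{H}$.

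For the first property, suppose $V(G) \setminus S$ is not independent, so there are adjacent vertices $a, b \in V(G) \setminus S$. I would contract $e = ab$. Since neither $a$ nor $b$ lies in $S$, contracting $ab$ touches no vertex of $S$ and alters no adjacency between two vertices of $S$; consequently $(G/ab)[S] = G[S] \cong H$. Thus $G/ab$ is $\mathcal{H}$-exist, contradicting that $G$ is critically $\mathcal{H}$-exist. Hence $V(G) \setminus S$ is independent.

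For the second property, suppose some $u \in V(G) \setminus S$ is a corner dominated by a vertex $v \in S$, that is, $N[u] \subseteq N[v]$. Since $u \in N[u] \subseteq N[v]$ and $u \neq v$, we get $u \in N(v)$, so $uv \in E(G)$ and I can contract it; write $w$ for the resulting vertex, whose neighborhood is $(N(u) \cup N(v)) \setminus \{u, v\}$. The containment $N(u) \subseteq N[v]$ collapses this to $N_{G/uv}(w) = N(v) \setminus \{u\}$. The crucial observation is then that $w$ mimics $v$ on $S$: because $u \notin S$, we have $N_{G/uv}(w) \cap S = N(v) \cap S$, while contraction leaves all adjacencies within $S \setminus \{v\}$ unchanged. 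Therefore the map fixing $S \setminus \{v\}$ and sending $v \mapsto w$ is an isomorphism from $G[S]$ onto $(G/uv)[(S \setminus \{v\}) \cup \{w\}]$, exhibiting an induced copy of $H$ in $G/uv$ and contradicting criticality.

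The routine first part aside, the only place demanding care is the neighborhood bookkeeping in the second part: one must verify precisely that $N_{G/uv}(w) \cap S = N(v) \cap S$ and that no adjacency inside $S \setminus \{v\}$ is created or destroyed by the contraction, which is exactly what guarantees that the substitution $v \mapsto w$ yields an \emph{induced} copy of $H$ rather than merely a subgraph. This is where the corner hypothesis $N[u] \subseteq N[v]$ is used in full strength.
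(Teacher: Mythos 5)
Your proof is correct and follows essentially the same route as the paper: in both parts you contradict criticality by contracting a single edge whose contraction preserves an induced copy of $H$ (the edge inside $V(G)\setminus S$ for independence, and the edge $uv$ guaranteed by $N[u]\subseteq N[v]$ for the corner case). Your write-up is in fact more careful than the paper's, which merely asserts $G/uv[S]\cong H$ without spelling out that $u\in N(v)$ or that the substitution $v\mapsto w$ is an isomorphism, but the underlying argument is identical.
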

\begin{proof}
	\begin{enumerate}
		\item For the sake of contradiction, assume there is a $S \subseteq V(G)$ such that $G[S]$ is isomorphic to a graph $H \in \mathcal{H}$  but $V(G) \setminus S$ is not independent. Hence, there are two vertices $u,v \in V(G) \setminus S$ where $u$ and $v$ are adjacent. Consequently, $G/uv[S]$ is isomorphic to $H$, which contradicts the fact that $G$ is a critically $\mathcal{H}$-exist.

		\item Since $V(G) \setminus S$ is independent, the neighborhood of any vertex in $V(G) \setminus S$ is a subset of $S$. For the sake of contradiction, assume that there is a corner $u \in V(G) \setminus S$ that is dominated by $v \in S$. However, $G/uv[S]$ is isomorphic to a graph $H \in \mathcal{H}$, which contradicts the fact that $G$ is a critically $\mathcal{H}$-exist.
	\end{enumerate}
\end{proof}

\begin{corollary}\label{coro: no vertex adajcet to 1 or 2 or 3 or max degree in critical graph}
	Let $G$ be a critically $\mathcal{H}$-exist graph for a set of graphs $\mathcal{H}$. If $S$ is a vertex set that induces a graph in $\mathcal{H}$, then no vertex in $V(G) \setminus S$ is adjacent to exactly one vertex, two adjacent vertices, three vertices that induce either $P_{3}$ or $C_{3}$, or a vertex with degree $|V(G)|-1$.
\end{corollary}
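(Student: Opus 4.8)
The plan is to derive Corollary~\ref{coro: no vertex adajcet to 1 or 2 or 3 or max degree in critical graph} as a direct consequence of Theorem~\ref{Theorem: critical H-exist: not S is independent}, treating each of the four forbidden adjacency patterns as a special case of the two structural properties just established. First I would fix a critically $\mathcal{H}$-exist graph $G$ and a set $S \subseteq V(G)$ with $G[S]$ isomorphic to a graph in $\mathcal{H}$. By property~(1) of the theorem, $V(G)\setminus S$ is independent, so every neighbor of a vertex $u \in V(G)\setminus S$ lies in $S$; I would keep this in mind throughout, since it means the "adjacent to exactly $k$ vertices" hypotheses refer to subsets of $S$.

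The main idea is that each forbidden pattern forces the existence of a corner in $V(G)\setminus S$ dominated by a vertex of $S$, contradicting property~(2). I would handle the cases in order. If $u \in V(G)\setminus S$ is adjacent to exactly one vertex $v \in S$, then $N[u] = \{u,v\} \subseteq N[v]$, so $u$ is a corner dominated by $v$. If $u$ is adjacent to exactly two adjacent vertices $v,v' \in S$, then since $v,v'$ are adjacent we have $N[u] = \{u,v,v'\} \subseteq N[v']$ (as $v' \in N[v']$, $v \in N(v')$, and $u \in N(v')$), so again $u$ is a corner dominated by $v'$. For the third case, if $u$ is adjacent to exactly three vertices inducing $P_3$ or $C_3$, I would pick the center of the $P_3$ (respectively any vertex of the $C_3$), call it $v$: its closed neighborhood contains the other two vertices of $S$ plus $u$, yielding $N[u] \subseteq N[v]$.

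The fourth case is slightly different in flavor. If $u \in V(G)\setminus S$ is adjacent to a vertex $v \in S$ with $deg(v) = |V(G)|-1$, then $v$ is adjacent to every other vertex of $G$, so $N[v] = V(G) \supseteq N[u]$, making $u$ a corner dominated by $v$. In every case the conclusion is the same contradiction with property~(2) of Theorem~\ref{Theorem: critical H-exist: not S is independent}, so no such vertex $u$ can exist.

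I do not expect any serious obstacle here, since the corollary is essentially a repackaging of property~(2) into concrete local configurations; the only point requiring care is verifying the corner (closed-neighborhood containment) condition in each case, particularly confirming that in the two- and three-vertex cases the chosen dominating vertex's closed neighborhood genuinely swallows $N[u]$, which relies on the adjacencies among the vertices of $S$ specified in the hypothesis. A secondary point worth stating explicitly is that these patterns are exhaustive only in the sense claimed, and that the independence of $V(G)\setminus S$ is what guarantees $N(u) \subseteq S$ so that "adjacent to exactly" these vertices fully describes $N[u]$.
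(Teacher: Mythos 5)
Your proof is correct and follows exactly the route the paper intends: the corollary is stated without proof as a direct consequence of Theorem~\ref{Theorem: critical H-exist: not S is independent}, with each forbidden pattern realized as a corner in $V(G)\setminus S$ dominated by a vertex of $S$ (using property~(1) to ensure all neighbors, including the full-degree vertex, lie in $S$). Nothing is missing; your case analysis verifying $N[u] \subseteq N[v]$ for the chosen dominating vertex $v$ is precisely the omitted argument.
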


Let $G$ be a graph with adjacent vertices $u,v$, and $\{w\} := V(G/uv) - V(G)$.
We define the mapping $f:2^{V(G)} \to 2^{V(G/uv)}$ as follows:
\[
f(S) = 
\begin{cases}
	S & \text{if }u,v \notin S,\\
	(S \cup \{w\}) - \{u,v\} & \text{otherwise.} \\ 
\end{cases}
\]
Let $S$ be a vertex set such that $G[S]$ is isomorphic to a given graph $H$. We call an edge $uv$, $H$-critical for $S$ if $G/uv[f(S)]$ is non-isomorphic to $H$. Furthermore, we call the edge $uv$ $H$-critical in $G$ if for any vertex subset $S$ that induces $H$, $uv$ is $H$-critical for $S$.

\begin{theorem}{\label{Theorem: constructive edge characterization}}
	Let $G$ be a graph and $S \subseteq V(G)$ where $H$ is the graph induced by $S$ in $G$. For any edge $uv \in E(G)$, $uv$ is $H$-critical for $S$ if and only if 
	\begin{enumerate}
		\item $	u,v \in S$ or
		\item $u \in V(G) \setminus S$, $v \in S$, and $u$ is not a corner dominated by $v$ in the subgraph $G[S\cup \{u\}]$.
	\end{enumerate}
\end{theorem}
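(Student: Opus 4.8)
The plan is to unwind the definition: by definition $uv$ is $H$-critical for $S$ exactly when $G/uv[f(S)]\not\cong H$, where $H=G[S]$. I would organize the argument by how many endpoints of $uv$ lie in $S$, since $f$ behaves differently in each case, and verify in every case that $G/uv[f(S)]\not\cong H$ holds precisely when condition~(1) or~(2) is satisfied.

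First I would dispose of the two easy cases. If $u,v\notin S$, then $f(S)=S$ and, because contraction only alters edges incident with $u$ or $v$, the induced subgraph $G/uv[S]$ coincides with $G[S]=H$; hence $uv$ is not $H$-critical, matching the fact that neither~(1) nor~(2) holds. If $u,v\in S$ (condition~(1)), then $f(S)=(S\cup\{w\})-\{u,v\}$ has $|S|-1$ vertices while $H$ has $|S|$, so they cannot be isomorphic and $uv$ is $H$-critical. By the symmetry of contraction in its two endpoints, the remaining case---exactly one endpoint in $S$---may be labelled so that $v\in S$ and $u\in V(G)\setminus S$, which is the situation of condition~(2).

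For this main case I would first record that $G/uv[f(S)]$ is the same graph as $G[S\cup\{u\}]/uv$, whereas $H=G[S]=G[S\cup\{u\}]-u$; thus the comparison is between \emph{contracting} $uv$ and \emph{deleting} $u$ in $H^{+}:=G[S\cup\{u\}]$. Both resulting graphs share the vertex set $S-\{v\}$ with identical adjacencies there, differing only at the distinguished vertex ($w$ after contraction, $v$ after deletion). A short computation gives $N_{f(S)}(w)=N_{S-\{v\}}(u)\cup N_{S}(v)$ and $N_{H}(v)=N_{S}(v)$. If $u$ is corner dominated by $v$ in $H^{+}$, i.e. $N_{H^{+}}[u]\subseteq N_{H^{+}}[v]$, this reduces to $N_{S-\{v\}}(u)\subseteq N_{S}(v)$, so the two neighborhoods agree and the map fixing $S-\{v\}$ and sending $w\mapsto v$ is an isomorphism; hence $uv$ is not $H$-critical, exactly as condition~(2) predicts.

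The one step that needs more than the natural bijection is the converse: when $u$ is \emph{not} corner dominated by $v$, I must rule out \emph{every} isomorphism, not merely the obvious one. Here I would use an isomorphism invariant rather than chase maps. Non-domination gives a vertex of $S-\{v\}$ adjacent to $u$ but not to $v$, so $N_{S-\{v\}}(u)\not\subseteq N_{S}(v)$; counting edges then yields $|E(G/uv[f(S)])|-|E(H)|=|N_{S-\{v\}}(u)\setminus N_{S}(v)|>0$. Since $G/uv[f(S)]$ and $H$ have the same number of vertices but different numbers of edges, they are non-isomorphic, so $uv$ is $H$-critical. This edge-count observation is the crux of the argument and the place I expect to be most delicate to state cleanly; everything else is bookkeeping about how $f$ and contraction act in the three cases.
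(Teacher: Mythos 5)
Your proposal is correct and follows essentially the same route as the paper's proof: vertex count rules out the case $u,v\in S$, the natural map fixing $S-\{v\}$ handles the non-critical cases, and the edge-count comparison (the paper's ``size'' argument, using a witness $w\in N_{S}(u)\setminus N_{S}(v)$ guaranteed by non-domination) settles the case where $u$ is not a corner dominated by $v$. Your write-up is somewhat more explicit than the paper's—particularly in exhibiting the isomorphism $w\mapsto v$ for the converse, which the paper merely asserts—but the decomposition and the key invariant are identical.
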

\begin{proof}
	\begin{enumerate}
		\item If $u,v \in S$, then $|f(S)| < |S|$. Thus, $G/uv[f(S)]$ is non-isomorphic to $H$.
		\item Let $u \in V(G) \setminus S$, $v \in S$, and $u$ is not a corner dominated by $v$ in the subgraph $G[S\cup \{u\}]$. Additionally, let $w \in N_{S}(u)$ but $w \notin N_{S}(v)$. In $G/uv$, let $x := V(G/uv) - V(G)$. Clearly, $x$ is adjacent to any vertex in $N_{S}(v) \cup \{w\}$. Hence, the size of $G/uv[f(S)]$ is larger than that of $G[S]$. Thus, $G/uv[f(S)]$ is non-isomorphic to $H$.
	\end{enumerate}
	
	Conversely, if none of the conditions in the theorem hold, then one of the following holds:
	\begin{enumerate}
		\item both $u$ and $v$ are not in $S$, or
		\item $u \in V(G) \setminus S$, $v \in S$, and $u$ is a corner dominated by $v$ in the subgraph $G[S\cup \{u\}]$.
	\end{enumerate}
	In both cases, $G[S] \cong G/uv[f(S)] \cong H$. Consequently, $uv$ is not $H$-critical for $S$.
\end{proof}

In the following section, we demonstrate how to use Theorem \ref{Theorem: characeterization} by using it to characterize claw-free graphs and line graphs.

\section{Special graphs}{\label{Section: application}}
\begin{proposition}{\label{Lemma: Cycle contraction}}
	If a graph $G$ is $C_{n}$-exist, where $n \geq 4$, then there is a $G$-contraction that is $C_{n-1}$-exist. 
\end{proposition}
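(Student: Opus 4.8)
The plan is to work directly with a witnessing induced cycle and contract one of its edges. Since $G$ is $C_{n}$-exist, I fix a vertex set $S \subseteq V(G)$ with $G[S] \cong C_{n}$, and I label its vertices $v_{1}, v_{2}, \ldots, v_{n}$ in cyclic order, so that the edges of $G[S]$ are exactly $v_{1}v_{2}, v_{2}v_{3}, \ldots, v_{n-1}v_{n}, v_{n}v_{1}$ and no others (this is the force of the cycle being \emph{induced}). I then contract the single cycle edge $v_{1}v_{2}$, and I set $H := G/v_{1}v_{2}$ with $\{w\} := V(H) - V(G)$ the merged vertex. The candidate witness for $H$ being $C_{n-1}$-exist is the set $T := \{w, v_{3}, v_{4}, \ldots, v_{n}\}$, which has exactly $n-1$ vertices.

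The core of the argument is to verify that $H[T] \cong C_{n-1}$. First I would observe that the adjacencies among $v_{3}, \ldots, v_{n}$ are untouched by the contraction, so $H[\{v_{3}, \ldots, v_{n}\}] = G[\{v_{3}, \ldots, v_{n}\}]$ is precisely the induced path $v_{3} - v_{4} - \cdots - v_{n}$ inherited from the induced cycle. Next I would determine which vertices of $\{v_{3}, \ldots, v_{n}\}$ the new vertex $w$ is adjacent to. By definition of contraction, $N_{H}(w)$ restricted to $S$ equals $\bigl(N_{G}(v_{1}) \cup N_{G}(v_{2})\bigr) \cap \{v_{3}, \ldots, v_{n}\}$. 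Because $G[S]$ is an induced $C_{n}$, the neighbors of $v_{1}$ inside $S$ are exactly $v_{2}$ and $v_{n}$, and those of $v_{2}$ are exactly $v_{1}$ and $v_{3}$; hence among $\{v_{3}, \ldots, v_{n}\}$ the vertex $w$ is adjacent to $v_{n}$ and $v_{3}$ and to nothing else. Combining these two facts, $H[T]$ is the path $v_{3} - \cdots - v_{n}$ together with $w$ joined to both of its endpoints $v_{3}$ and $v_{n}$, which is the cycle $C_{n-1}$. The hypothesis $n \geq 4$ is exactly what guarantees $n-1 \geq 3$, so $C_{n-1}$ is a genuine cycle; for the boundary case $n = 4$ the set $T = \{w, v_{3}, v_{4}\}$ induces a triangle, as expected.

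The step I expect to require the most care is ruling out \emph{spurious} edges in $H[T]$, since after contraction $w$ inherits all external neighbors of $v_{1}$ and $v_{2}$ as well. The point to make precise is that those external neighbors lie outside $S$ and therefore cannot be any of $v_{3}, \ldots, v_{n}$, so they do not affect the induced subgraph on $T$; the only adjacencies of $w$ within $T$ come from the cycle edges $v_{1}v_{n}$ and $v_{2}v_{3}$. With that observation in place the verification is routine, and I would conclude that $H = G/v_{1}v_{2}$ is a $G$-contraction that is $C_{n-1}$-exist, completing the proof.
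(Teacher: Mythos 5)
Your proposal is correct, and it is precisely the ``apparent'' argument the paper leaves unwritten: the paper states this proposition without proof, and the intended justification is exactly yours --- contract one edge of an induced $C_{n}$, note that the merged vertex $w$ is adjacent within the cycle's vertex set only to the two vertices $v_{3}$ and $v_{n}$ (by induced-ness), and conclude that $\{w, v_{3}, \ldots, v_{n}\}$ induces $C_{n-1}$. Your care in ruling out spurious edges from external neighbors is exactly the right point to check, and the argument is complete.
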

\begin{proposition}{\label{Lemma: EC(C3)}}
	The only critical $C_{3}$-exist graph is $C_{3}$.
\end{proposition}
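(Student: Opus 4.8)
The plan is to prove that the only critically $C_3$-exist graph is $C_3$ itself by combining the structural constraints from Theorem~\ref{Theorem: critical H-exist: not S is independent} (and its Corollary~\ref{coro: no vertex adajcet to 1 or 2 or 3 or max degree in critical graph}) with the fact that $C_3$ is a very small and rigid graph. First I would verify that $C_3$ is indeed critically $C_3$-exist: $C_3$ is $C_3$-exist trivially, and contracting any of its three edges yields a graph on two vertices (a single edge), which is $C_3$-free. So $C_3$ qualifies, and the real content is uniqueness.

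For uniqueness, suppose $G$ is critically $C_3$-exist and let $S \subseteq V(G)$ be a vertex set with $G[S] \cong C_3$; write $S = \{a,b,c\}$. By part~(1) of Theorem~\ref{Theorem: critical H-exist: not S is independent}, the set $V(G) \setminus S$ is independent. My strategy is to show $V(G) \setminus S = \emptyset$, which forces $G = G[S] \cong C_3$. The key observation is that any vertex $u \in V(G) \setminus S$ must have all its neighbors inside $S$ (since the complement of $S$ is independent), so $N(u) \subseteq \{a,b,c\}$ and thus $\deg(u) \in \{0,1,2,3\}$. I would then rule out each possibility using the forbidden adjacency patterns of Corollary~\ref{coro: no vertex adajcet to 1 or 2 or 3 or max degree in critical graph}. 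Having excluded isolated vertices already, $\deg(u)=0$ is out; $\deg(u)=1$ is forbidden (no vertex adjacent to exactly one vertex); $\deg(u)=2$ would make $u$ adjacent to two vertices of $S$ that are necessarily adjacent inside $C_3$, which is forbidden (no vertex adjacent to two adjacent vertices); and $\deg(u)=3$ means $u$ is adjacent to all of $\{a,b,c\}$, which induce a $C_3 = P_3$-containing triangle, so $u$ is adjacent to three vertices inducing $C_3$, again forbidden.

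Hence no vertex can lie outside $S$, giving $V(G) = S$ and $G \cong C_3$. The main obstacle I anticipate is purely bookkeeping: making sure the corollary's enumerated cases genuinely cover every admissible degree of an external vertex once independence of $V(G)\setminus S$ pins its neighborhood inside the triangle. Since in $C_3$ every pair of the three vertices is adjacent, the "two nonadjacent vertices" escape hatch never arises, so the degree-$2$ case is cleanly forbidden; and the degree-$3$ case is covered by the "three vertices inducing $P_3$ or $C_3$" clause. I would present the argument as a short case analysis on $\deg(u)$, citing the corollary at each step, and conclude that $G$ has no vertices outside any triangle it contains, whence $G \cong C_3$.
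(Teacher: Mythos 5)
Your proof is correct, and it is essentially the argument the paper intends: the paper states this proposition without proof (under its stated convention of omitting proofs that are apparent), and the apparent proof is exactly your route --- independence of $V(G)\setminus S$ from Theorem~\ref{Theorem: critical H-exist: not S is independent}, then ruling out external vertices of degree $1$, $2$, or $3$ via Corollary~\ref{coro: no vertex adajcet to 1 or 2 or 3 or max degree in critical graph} (every pair in a triangle being adjacent, so every case is forbidden), with degree $0$ excluded by the paper's no-isolated-vertices convention. Your case analysis and the verification that $C_{3}$ itself is critically $C_{3}$-exist are both sound.
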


\subsection{Claw-Free Graphs}
A \emph{claw} is a star $S_{3}$. There are several graph families that are subfamilies of claw-free graphs, for instance, line graphs and complements of triangle-free graphs. For more graph families and results about claw-free graphs, please consult \cite{faudree1997claw}. Additionally, for more structural results about claw-free graphs, please consult \cite{chudnovsky2008claw,chudnovsky2005structure}. In the following, we call the graph $H_{5}$ in Figure \ref{Figure: the graphs in CC(claw)} bull.

\begin{proposition}{\label{Proposition: CC(claw)}}
	The graphs in Figure \ref{Figure: the graphs in CC(claw)} are the only claw-split graphs.
\end{proposition}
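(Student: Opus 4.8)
The plan is to invoke Theorem~\ref{Theorem: CC=H[]}, which collapses the statement to a finite enumeration: $G$ is claw-split if and only if $G \in splitting(S_3)$, so it suffices to list, up to isomorphism, every graph obtained by splitting a vertex of the claw. To cut down the work, I would first use the orbit structure of $\mathrm{Aut}(S_3)$. The claw has automorphism group the symmetric group acting on its three leaves, so its vertex set has exactly two orbits: the center $c$ and the three (mutually similar) leaves. By the proposition that similar vertices have equal splittings, together with the three-step construction following it, it is enough to compute $splitting(S_3,c)$ and $splitting(S_3,\ell)$ for a single leaf $\ell$.

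Next I would enumerate $splitting(S_3,c)$. Splitting the center replaces $c$ by two adjacent vertices $u,w$ and assigns each leaf either to $U$ only, to $W$ only, or to both, making it respectively a pendant of $u$, a pendant of $w$, or a common neighbour forming a triangle with $u$ and $w$. Since the three leaves are interchangeable, a splitting is determined by the counts $(n_U,n_W,n_B)$ with $n_U+n_W+n_B=3$, and the extra symmetry $u \leftrightarrow w$ swaps $n_U \leftrightarrow n_W$; quotienting by both symmetries leaves exactly the six classes $(3,0,0),(2,1,0),(2,0,1),(1,1,1),(1,0,2),(0,0,3)$. I would then name the corresponding graphs---$K_{1,4}$, the chair, the cricket, the bull (the graph $H_5$), the kite, and the triangular book $B_3$---and certify that they are pairwise non-isomorphic by exhibiting their distinct degree sequences $(4,1,1,1,1),(3,2,1,1,1),(4,2,2,1,1),(3,3,2,1,1),(4,3,2,2,1),(4,4,2,2,2)$.

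Finally I would dispose of the leaf. Because $deg(\ell)=1$, its neighbourhood is the single vertex $c$, so the only splittings with $U\cup W=\{c\}$ are $U=\{c\},\,W=\varnothing$ (up to the swap), which reproduces the chair, and $U=W=\{c\}$, which reproduces the cricket; both already occur in the center list. Hence $splitting(S_3)$ consists of precisely these six graphs, which by Theorem~\ref{Theorem: CC=H[]} are exactly the claw-split graphs of Figure~\ref{Figure: the graphs in CC(claw)}. The one delicate point, and the place where an enumeration can go wrong, is the bookkeeping: one must remember that $U$ and $W$ may overlap (common neighbours) and may be empty (pendants), and one must quotient correctly by \emph{both} the leaf-permutation symmetry and the $u\leftrightarrow w$ symmetry, so that no splitting is dropped and no two isomorphic ones are counted twice; the degree-sequence check is what makes the final list provably irredundant.
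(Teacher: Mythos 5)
Your proof is correct and follows exactly the route the paper intends: the paper states this proposition without an explicit proof, relying on Theorem~\ref{Theorem: CC=H[]} and the orbit-based construction described immediately before it, which is precisely your enumeration of $splitting(S_3,c)$ and $splitting(S_3,\ell)$. Your six classes $(3,0,0),(2,1,0),(2,0,1),(1,1,1),(1,0,2),(0,0,3)$ match $H_3,H_4,H_2,H_5,H_1,H_6$ respectively, and the degree-sequence check plus the observation that leaf splittings only reproduce the chair and cricket makes the list complete and irredundant.
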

Identifying the claw-free graphs  among the graphs presented in Figure \ref{Figure: the graphs in CC(claw)}, we obtain the following.
\begin{corollary}{\label{Corollary: FCC(claw)}}
	Bull is the only claw-free-split graph.
\end{corollary}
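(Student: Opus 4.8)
The plan is to reduce the statement to a finite inspection. By definition a graph belongs to $\fs(S_3)$, i.e.\ is claw-free-split, precisely when it is simultaneously claw-split and claw-free, so the claw-free-split graphs form the subfamily of the claw-split graphs consisting of those with no induced $S_3$. Proposition~\ref{Proposition: CC(claw)} already pins the claw-split graphs down completely: up to isomorphism they are exactly the graphs $H_1,\dots,H_5$ of Figure~\ref{Figure: the graphs in CC(claw)}. Hence the entire task is to decide, for each of these five graphs, whether it contains an induced claw, and to conclude that the bull $H_5$ is the only one that does not.

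First I would dispatch the four non-bull graphs $H_1,\dots,H_4$. For each such $H_i$ I would exhibit a concrete induced claw, namely a vertex $v$ of degree at least three together with three pairwise nonadjacent vertices of $N(v)$; the subgraph induced by $v$ and these three neighbours is then isomorphic to $S_3$, so $H_i$ is claw-exist and therefore not claw-free. This is a direct reading-off from Figure~\ref{Figure: the graphs in CC(claw)}: in each of $H_1,\dots,H_4$ one of the two vertices created by the splitting, or the original center, remains adjacent to an independent triple, which is exactly the configuration a claw requires at its center.

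It then remains to verify that the bull $H_5$ is genuinely claw-free. The key structural observation is that the bull is a triangle with a pendant edge attached at two of its three corners, so its only vertices of degree three are those two corners, and the neighbourhood of each of them contains the other two triangle-vertices, which are adjacent. Consequently no vertex of the bull has three pairwise nonadjacent neighbours, and since a claw demands precisely such a configuration at its center, the bull contains no induced $S_3$. Combining this with the previous paragraph, $H_5$ is the unique graph among $H_1,\dots,H_5$ that is both claw-split and claw-free, which is the assertion.

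I expect no genuine obstacle: once Proposition~\ref{Proposition: CC(claw)} is in hand, the argument is a bounded case check over five small graphs. The only point deserving care is making the claw-freeness of the bull airtight rather than merely visually plausible, i.e.\ checking all vertices of degree $\ge 3$ (there are just two) and confirming that each has an adjacent pair among its neighbours, and, symmetrically, making sure the induced claws claimed in $H_1,\dots,H_4$ really are \emph{induced}, with no extra edge among the three chosen neighbours. Both are immediate from the explicit neighbourhoods displayed in the figure, so the corollary follows.
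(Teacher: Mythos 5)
Your proposal takes exactly the paper's route: the paper proves Corollary \ref{Corollary: FCC(claw)} precisely by combining the enumeration of claw-split graphs in Proposition \ref{Proposition: CC(claw)} with a direct inspection of which graph in Figure \ref{Figure: the graphs in CC(claw)} is claw-free, which is what you do. The one slip is the count: the figure lists \emph{six} claw-split graphs $H_{1},\dots,H_{6}$, not five, so your finite check must also handle $H_{6}$; this is harmless for your method, since each degree-$4$ vertex of $H_{6}$ has three pairwise nonadjacent neighbours, so $H_{6}$ contains an induced claw and is excluded just like $H_{1},\dots,H_{4}$. With that identical extra case added, your argument coincides with the paper's.
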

\begin{figure}[ht!]
	\centering
	\begin{tikzpicture}[hhh/.style={draw=black,circle,inner sep=2pt,minimum size=0.2cm}]
		\begin{scope}[shift={(-3,0)},scale=1.5]
			\node 	   (h) at (0,0) 	{$H_{1}$};
			\node[hhh] (e) at (0,0.3) 	{};
			\node[hhh] (a) at (0,0.8) 	{};
			\node[hhh] (b) at (0.4,1) 	{};
			\node[hhh] (c) at (0,1.3) 	{};
			\node[hhh] (d) at (-0.4,1) 	{};
			
			\draw (c) -- (a) -- (b)-- (c) -- (d) -- (a) --(e) ;
		\end{scope}
		
		\begin{scope}[shift={(0,0)},scale=1.5]
			\node 	(h) at (0,0) 	{$H_{2}$};
			\node[hhh] 	(e) at (-0.4,0.3) 	{};
			\node[hhh] 	(d) at (0.4,0.3) 	{};
			\node[hhh] 	(a) at (0,0.8) 	{};
			\node[hhh]  (b) at (-0.4,1.3) 	{};
			\node[hhh] 	(c) at (0.4,1.3) 	{};
			
			\draw (a) -- (c) -- (b)-- (a) -- (d)  (e) --(a);
		\end{scope}
		
		\begin{scope}[shift={(3,0)},scale=1.5]
			\node 	(h) at (0,0) 	{$H_{3}$};
			\node[hhh] (d) at (-0.4,0.3) 	{};
			\node[hhh] (e) at (0.4,0.3) 	{};
			\node[hhh] 	(a) at (0,0.8) 	{};
			\node[hhh] (b) at (-0.4,1.3) 	{};
			\node[hhh] (c) at (0.4,1.3) 	{};
			
			\draw (c) -- (a) -- (b)  (d) -- (a) --(e);
		\end{scope}
		
		\begin{scope}[shift={(-3,-3)},scale=1.3]
			\node 	(h) at (0,0) 	{$H_{4}$};
			\node[hhh] 	(a) at (0,0.8) 	{};
			\node[hhh]  (b) at (0,1.3) 	{};
			\node[hhh] 	(c) at (0,1.8) 	{};
			\node[hhh] 	(d) at (-0.4,0.3) 	{};
			\node[hhh] 	(e) at (0.4,0.3) 	{};
			
			\draw (c) -- (b)-- (a) -- (d)  (e) --(a);
		\end{scope}
		
		\begin{scope}[shift={(0,-3)},scale=1.3]
			\node 	(h) at (0,0) 	 {$H_{5}$};
			\node[hhh] 	(a) at (0,1.5) 		{};
			\node[hhh]  (b) at (-0.5,1) 	{};
			\node[hhh] 	(c) at (-0.5,0.3) 	{};
			\node[hhh] 	(d) at (0.5,1) 		{};
			\node[hhh] 	(e) at (0.5,0.3) 	{};
			
			\draw (a) -- (b) --(d) --(a)   (c) -- (b) (d)--(e);
		\end{scope}
		
		\begin{scope}[shift={(3,-3)},scale=1.3]
			\node 	(h) at (0,0) 	{$H_{6}$};
			\node[hhh] 	(a) at (0,1.5) 	{};
			\node[hhh]  (b) at (-0.5,1) 	{};
			\node[hhh] 	(c) at (-0.5,0.3) 	{};
			\node[hhh] 	(d) at (0.5,1) 	{};
			\node[hhh] 	(e) at (0.5,0.3) 	{};
			
			\draw (a) -- (b) --(d) --(a)  (d) -- (c) -- (b) (d) -- (e) -- (b);
		\end{scope}
	\end{tikzpicture}	
	\caption{The claw-split graphs}
	\label{Figure: the graphs in CC(claw)}
\end{figure}
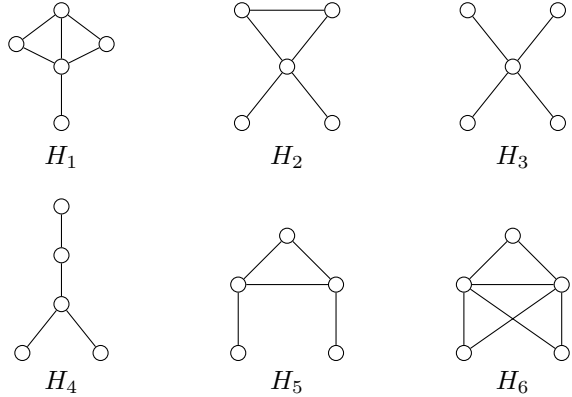

\begin{proposition}{\label{Proposition: EC(claw)}}
	The graphs in Figure \ref{Figure: the graphs in EC(claw)} are the only critically claw-exist graphs.
\end{proposition}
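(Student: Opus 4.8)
The plan is to characterize critically claw-exist graphs by applying Theorem~\ref{Theorem: critical H-exist: not S is independent} and Corollary~\ref{coro: no vertex adajcet to 1 or 2 or 3 or max degree in critical graph} to the specific case $\mathcal{H} = \{S_3\}$ (the claw), and then verify that the finitely many resulting graphs coincide with those drawn in Figure~\ref{Figure: the graphs in EC(claw)}. The key structural observation is that a critically claw-exist graph $G$ must contain some induced claw on a vertex set $S$ with center $c$ and three independent leaves; let me fix such an $S$. By part~(1) of Theorem~\ref{Theorem: critical H-exist: not S is independent}, $V(G)\setminus S$ is independent, and by part~(2) no vertex of $V(G)\setminus S$ is a corner dominated by a vertex of $S$. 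The heavy lifting is done by Corollary~\ref{coro: no vertex adajcet to 1 or 2 or 3 or max degree in critical graph}, which forbids any vertex outside $S$ from being adjacent to exactly one vertex of $S$, to exactly two adjacent vertices, or to three vertices inducing $P_3$ or $C_3$.

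First I would bound the size of $G$. Since $|S| = 4$ and $V(G)\setminus S$ is independent, every vertex $x \in V(G)\setminus S$ has $N(x) \subseteq S$, so its possible neighborhoods are subsets of the four-element set $S$. The corollary eliminates the singletons, the adjacent pairs, and the triples inducing $P_3$ or $C_3$; combined with the corner-domination constraint from Theorem~\ref{Theorem: critical H-exist: not S is independent}(2) and the requirement that $G$ itself be claw-exist (so attaching $x$ must not destroy the criticality), only a short list of admissible neighborhood patterns survives. Enumerating these admissible patterns for a single external vertex, and then checking which combinations of several external vertices remain mutually consistent with the independence of $V(G)\setminus S$, reduces the whole problem to a finite case analysis yielding a bounded number of candidate graphs.

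The main obstacle I anticipate is the combinatorial bookkeeping of the external-vertex neighborhoods: although each individual vertex has only finitely many admissible neighborhood types in $S$, one must verify that each resulting graph $G$ is genuinely \emph{critically} claw-exist rather than merely claw-exist. That is, for each candidate I must check that \emph{every} $G$-contraction is claw-free, not just contractions of one chosen edge. To organize this, I would use the machinery of $H$-critical edges from Theorem~\ref{Theorem: constructive edge characterization}: an edge $uv$ fails to destroy a claw on $S$ exactly when it is not $\mathrm{claw}$-critical for $S$, so I would check that for each candidate $G$ there is no edge whose contraction preserves an induced claw. Equivalently, I would confirm that every induced claw in $G$ is destroyed by every contraction, which by symmetry (using the orbits of $\mathrm{Aut}(G)$, as in the construction recipe following Proposition~5) cuts down the number of edges that need separate inspection.

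Finally, I would confront the converse direction: showing that each of the graphs actually drawn in Figure~\ref{Figure: the graphs in EC(claw)} is indeed critically claw-exist. This amounts to verifying, for each depicted graph, that it contains an induced claw (so it is claw-exist) and that contracting any edge yields a claw-free graph. Here I expect a direct verification to suffice, again exploiting automorphisms to reduce the number of edge-orbits one must contract and inspect. The symmetry between the structural constraints enumerated above and the explicit figures is what closes the characterization, so the proof is complete once the enumeration from the forward direction is shown to produce exactly the figure's graph list and no others.
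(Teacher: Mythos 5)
Your overall skeleton matches the paper's proof: fix an induced claw $S=\{r,s,t,u\}$ with center $u$, use Theorem~\ref{Theorem: critical H-exist: not S is independent} and Corollary~\ref{coro: no vertex adajcet to 1 or 2 or 3 or max degree in critical graph} to show every external vertex is nonadjacent to $u$ and has at least two neighbors among the leaves, enumerate, then verify the graphs in Figure~\ref{Figure: the graphs in EC(claw)}. But there is a genuine gap at the step where you claim this yields ``a bounded number of candidate graphs.'' The constraints you invoke restrict only the neighborhood of each external vertex \emph{individually} (to a $2$-subset or the full $3$-subset of the leaves); they impose no constraint at all on how many external vertices may coexist, because independence of $V(G)\setminus S$ is automatically satisfied by any collection of external vertices --- ``mutual consistency with independence'' never eliminates a combination. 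Concretely, the claw together with $n$ external vertices each adjacent to all three leaves (the graph $K_{n+1,3}$) passes every test on your list for every $n$, yet it is critically claw-exist only for $n\le 2$. What makes the paper's list finite is a set of additional, bespoke contraction arguments that your plan never supplies: if two externals of degree $2$ have the same neighborhood, contracting $tu$ leaves a claw on $\{r,u,v,w\}$; if two externals of degree $2$ (with distinct neighborhoods) coexist with any third external $x$, contracting an edge from $x$ into $S$ leaves a claw; and if two externals of degree $3$ coexist with any third external $x$, contracting $sx$ leaves a claw. These arguments are what force $|V(G)\setminus S|\le 2$ and pin down exactly the six graphs $H_1,\dots,H_6$; without them there is no finite case analysis to carry out.

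A second, smaller flaw lies in your verification plan. You assert that ``there is no edge whose contraction preserves an induced claw'' is equivalent to ``every induced claw in $G$ is destroyed by every contraction,'' and propose to check the latter via Theorem~\ref{Theorem: constructive edge characterization}. That equivalence is false: contraction can \emph{create} a claw on a vertex set that induced no claw in $G$ --- this is exactly the phenomenon behind Corollary~\ref{Corollary: FCC(claw)}, where the bull is claw-free yet one of its contractions contains a claw. Theorem~\ref{Theorem: constructive edge characterization} only decides when the image $f(S)$ of a fixed set $S$ stops inducing $H$; it says nothing about new claws appearing elsewhere in $G/uv$. So for each candidate you must check claw-freeness of every contraction outright (the paper does this by direct inspection, using order and independence-number arguments), not merely the survival of the claws already present in $G$.
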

\begin{figure}[ht!]
	\centering
	\begin{tikzpicture}[hhh/.style={draw=black,circle,inner sep=2pt,minimum size=0.2cm}]
		\begin{scope}[shift={(-3.5,0)},scale=2]
			\node 	   (a) at (-0.25,0) 		{$H_{1}$};
			\node[hhh] (b) at (-0.5,0.6)	{};
			\node[hhh] (c1) at (0,0.3) 		{};
			\node[hhh] (c2) at (0,0.6) 		{};
			\node[hhh] (c3) at (0,0.9) 		{};
			
			\draw (c1) -- (b) (c2) -- (b)  (c3) -- (b);
		\end{scope}
		
		\begin{scope}[shift={(-1.5,0)},scale=2]
			\node 	   (a) at (-0.25,0) 		{$H_{2}$};
			\node[hhh] (b1) at (-0.5,0.75)	{};
			\node[hhh] (b2) at (-0.5,0.45)	{};
			\node[hhh] (c1) at (0,0.3) 		{};
			\node[hhh] (c2) at (0,0.6) 		{};
			\node[hhh] (c3) at (0,0.9) 		{};
			
			\draw (c1) -- (b1) (c2) -- (b1)  (c3) -- (b1)
			(c1) -- (b2) (c2) -- (b2)  (c3) -- (b2);
		\end{scope}
		
		\begin{scope}[shift={(0.5,0)},scale=2]
			\node 	   (a) at (-0.25,0) 		{$H_{3}$};
			\node[hhh] (b1) at (-0.5,0.3)	{};
			\node[hhh] (b2) at (-0.5,0.6)	{};
			\node[hhh] (b3) at (-0.5,0.9)	{};
			\node[hhh] (c1) at (0,0.3) 		{};
			\node[hhh] (c2) at (0,0.6) 		{};
			\node[hhh] (c3) at (0,0.9) 		{};
			
			\draw (c1) -- (b1) (c2) -- (b1)  (c3) -- (b1)
			(c1) -- (b2) (c2) -- (b2)  (c3) -- (b2)
			(c1) -- (b3) (c2) -- (b3)  (c3) -- (b3);
		\end{scope}
		
		\begin{scope}[shift={(2.5,0)},scale=2]
			\node 	   (a) at (0,0) 		{$H_{4}$};
			\node[hhh] (b) at (-0.5,0.6)	{};
			\node[hhh] (c1) at (0,0.3) 		{};
			\node[hhh] (c2) at (0,0.6) 		{};
			\node[hhh] (c3) at (0,0.9) 		{};
			\node[hhh] (d) at (0.5,0.75) 	{};
			
			\draw (c1) -- (b) (c2) -- (b)  (c3) -- (b) (c2)--(d)--(c3);
		\end{scope}
		
		\begin{scope}[shift={(-2,-2.5)},scale=2]
			\node 	   (a) at (0,0) 		{$H_{5}$};
			\node[hhh] (b) at (-0.5,0.6)	{};
			\node[hhh] (c1) at (0,0.3) 		{};
			\node[hhh] (c2) at (0,0.6) 		{};
			\node[hhh] (c3) at (0,0.9) 		{};
			\node[hhh] (d1) at (0.5,0.75) 	{};
			\node[hhh] (d2) at (0.5,0.45) 	{};
			
			\draw (c1) -- (b) (c2) -- (b)  (c3) -- (b) 
			(c2)--(d1)--(c3) (c1)--(d2)--(c2);
		\end{scope}
		
		\begin{scope}[shift={(1,-2.5)},scale=2]
			\node 	   (a) at (0,0) 		{$H_{6}$};
			\node[hhh] (b1) at (-0.5,0.75)	{};
			\node[hhh] (b2) at (-0.5,0.45) 	{};
			\node[hhh] (c1) at (0,0.3) 		{};
			\node[hhh] (c2) at (0,0.6) 		{};
			\node[hhh] (c3) at (0,0.9) 		{};
			\node[hhh] (d) at (0.5,0.75) 	{};

			\draw (c1) -- (b1) (c2) -- (b1)  (c3) -- (b1) 
			(c1) -- (b2) (c2) -- (b2)  (c3) -- (b2) 
			(c2)--(d)--(c3);
		\end{scope}
	\end{tikzpicture}	
	\caption{The critically claw-exist graphs}
	\label{Figure: the graphs in EC(claw)}
\end{figure}
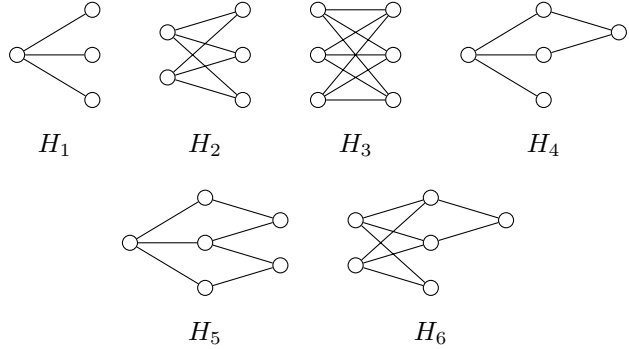

\begin{proof}
	\renewcommand{\qedsymbol}{\claimqed}
	Through this proof, we assume that $G$ is a critically claw-exist graph with $S := \{r,s,t,u\}$, where $G[S]$ is isomorphic to a claw and $u$ is its center. By Theorem \ref{Theorem: critical H-exist: not S is independent}, $V(G) \setminus S$ is independent. Thus, any vertex in $V(G) \setminus S$ is adjacent to vertices only in $S$. By Corollary \ref{coro: no vertex adajcet to 1 or 2 or 3 or max degree in critical graph}, if $v \in V(G) \setminus S$, then neither $|N(v)| = 1$ nor $v$ is adjacent to $u$.
	
	Let $v,w \in V(G) \setminus S$ such that $N(v) = N(w)$ where $|N(v)|=2$.
	W.l.o.g., assume that $N(v) =\{r,s\}$, however, $G/tu[\{r,u,v,w\}]$ induces a claw, which contradicts the fact that $G$ is a critically claw-exist. Thus, if $v,w \in V(G) \setminus S$ such that $|N(v)| = |N(w)|=2$, then $N(v) \not= N(w)$.
	
	Let $v,w,x \in V(G) \setminus S$ such that $|N(v)| = |N(w)| = 2$.
	No two vertices of $v,w$, and $x$ (if $|N(x)| =2$) are adjacent to the same vertices in $S$. W.l.o.g., assume that $N(v) =\{r,s\}$, $N(w) =\{r,t\}$, and $ \{s,t\} \subseteq N(x)$. The graph $G/tx[\{r,u,v,w\}]$ induces a claw, which contradicts the fact that $G$ is a critically claw-exist. Thus, if $v,w \in V(G) \setminus S$ such that $|N(v)| = |N(w)|=2$, then $G$ is isomorphic to $H_{5}$ in Figure \ref{Figure: the graphs in EC(claw)}.

	Let $v,w,x \in V(G) \setminus S$ such that $|N(v)| = |N(w)| = 3$. 
	W.l.o.g., assume that $s$ is adjacent to $x$. The graph $G/sx[\{r,u,v,w\}]$ induces a claw, which contradicts the fact that $G$ is a critically claw-exist. Thus, if $v,w \in V(G) \setminus S$ such that $N(v) = N(w)$ and $|N(v)|=3$, then $G$ is isomorphic to $H_{3}$ in Figure \ref{Figure: the graphs in EC(claw)}.

	Consequently, the possible critically claw-exist graphs are those presented in Figure \ref{Figure: the graphs in EC(claw)}. To complete the proof, we have to show that all these graphs are critically claw-exist, which is straightforward in each case.
	\renewcommand{\qedsymbol}{$\square$}
\end{proof}

By Theorem \ref{Theorem: characeterization}, Corollary \ref{Corollary: FCC(claw)}, and Proposition \ref{Proposition: EC(claw)}, we obtain the following result.
\begin{theorem}{\label{Theorem: claw-free characterization}}
	Let $G$ be a bull-free graph that is non-isomorphic to any graph in Figure \ref{Figure: the graphs in EC(claw)}. The graph $G$ is claw-free if and only if any $G$-contraction is claw-free.
\end{theorem}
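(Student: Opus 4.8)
The plan is to apply Theorem \ref{Theorem: characeterization} to the single-graph family $\mathcal{H} = \{\text{claw}\}$ and to translate its two abstract hypotheses into the concrete conditions in the statement. Since $\mathcal{H}$ contains only the claw, being $\mathcal{H}$-free is synonymous with being claw-free, so the biconditional supplied by Theorem \ref{Theorem: characeterization}---namely that $G$ is $\mathcal{H}$-free if and only if every $G$-contraction is $\mathcal{H}$-free---is exactly the conclusion we want. Everything therefore reduces to checking that the two hypotheses of Theorem \ref{Theorem: characeterization} hold under the assumptions that $G$ is bull-free and not isomorphic to any graph in Figure \ref{Figure: the graphs in EC(claw)}.

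First I would settle the $\fs(\mathcal{H})$-freeness requirement. By Corollary \ref{Corollary: FCC(claw)}, the bull is the only claw-free-split graph, so $\fs(\{\text{claw}\}) = \{\text{bull}\}$. Hence a graph is $\fs(\mathcal{H})$-free precisely when it has no induced bull, i.e.\ when it is bull-free; this is the first assumption placed on $G$.

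Next I would handle the requirement that $G$ is not critically $\mathcal{H}$-exist. By Proposition \ref{Proposition: EC(claw)}, the graphs displayed in Figure \ref{Figure: the graphs in EC(claw)} are the only critically claw-exist graphs. Consequently $G$ fails to be critically claw-exist exactly when it is non-isomorphic to every graph in that figure, which is the second assumption on $G$. Both hypotheses of Theorem \ref{Theorem: characeterization} are thus met, and the theorem applies verbatim to give the claimed equivalence.

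I do not expect a genuine obstacle at this stage: all the substantive work lies in the two classifications cited above, and the remaining argument is purely a matter of matching the abstract vocabulary ($\fs(\mathcal{H})$-free, not critically $\mathcal{H}$-exist) to the explicit one (bull-free, avoiding the graphs of Figure \ref{Figure: the graphs in EC(claw)}). The only point demanding attention is that this dictionary be exact in both directions, which is precisely what Corollary \ref{Corollary: FCC(claw)} and Proposition \ref{Proposition: EC(claw)} guarantee.
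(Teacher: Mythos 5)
Your proposal is correct and is exactly the paper's own argument: the paper derives this theorem directly by combining Theorem \ref{Theorem: characeterization} (with $\mathcal{H}=\{\text{claw}\}$), Corollary \ref{Corollary: FCC(claw)} (so $\fs(\mathcal{H})$-free means bull-free), and Proposition \ref{Proposition: EC(claw)} (so not critically claw-exist means avoiding the graphs of Figure \ref{Figure: the graphs in EC(claw)}). Your write-up just spells out the same dictionary the paper leaves implicit.
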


\subsection{Line Graphs}
The \emph{line} graph of a graph $G$, denoted by $L(G)$, is the graph whose vertex set is the edge set of $G$ and two vertices in $L(G)$ are adjacent if and only if the corresponding edges in $G$ intersect in a vertex. Line graphs were characterized as follows.
\begin{theorem}[\cite{krausz1943demonstration,harary}]\label{lineCharcaterization by harary}
	A graph $G$ is a line graph if and only if its edges can be partitioned into cliques where every vertex of $G$ belong to at most two of such cliques.
\end{theorem}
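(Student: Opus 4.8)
The plan is to prove both implications directly, since the statement is an equivalence. Throughout I write $L(H)$ for the line graph of $H$ and use the defining fact that the vertices of $L(H)$ are the edges of $H$, two of them being adjacent exactly when the corresponding edges of $H$ share an endpoint.

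For the forward direction, suppose $G = L(H)$. For each vertex $v \in V(H)$ let $C_v$ be the set of edges of $H$ incident with $v$; since any two edges sharing $v$ are adjacent in $L(H)$, the set $C_v$ induces a clique in $G$. Because $H$ is simple, two distinct edges of $H$ meet in at most one vertex, so each edge of $G$ (a pair of edges of $H$ with a common endpoint $v$) lies in exactly one $C_v$. Hence the family $\{C_v\}$ partitions $E(G)$ into cliques, where singleton $C_v$ coming from degree-one vertices contribute no edges and may be discarded. Finally, a vertex of $G$ is an edge $xy$ of $H$, and it lies only in $C_x$ and $C_y$, that is, in at most two of the cliques. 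This direction is essentially bookkeeping.

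The substance is the reverse direction. Given a partition of $E(G)$ into cliques $C_1,\dots,C_k$ with every vertex of $G$ lying in at most two of them, I will construct $H$ with $L(H)\cong G$ by reversing the argument above: introduce one vertex $c_i$ of $H$ for each clique $C_i$, and let edges of $H$ encode vertices of $G$. Concretely, for a vertex $x$ of $G$ lying in two cliques $C_i,C_j$ I add the edge $c_i c_j$; for $x$ lying in a single clique $C_i$ I add a pendant edge $c_i d_x$ at a fresh vertex $d_x$; and an isolated vertex of $G$ (lying in no clique) becomes an isolated edge of $H$. I then claim the map sending each $x$ to its associated edge of $H$ is an isomorphism $G \to L(H)$.

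The main obstacle, and where the care lies, is verifying that this construction is well defined and that the claimed map really is an isomorphism. First I must check that $H$ is simple: two vertices of $G$ cannot lie in two common cliques, for their joining edge would then belong to two parts of the partition, so distinct vertices of $G$ yield distinct edges of $H$ and no loops arise. Then the core equivalence is that two vertices $x,y$ of $G$ are adjacent if and only if the edge $xy$ lies in some common clique $C_i$ if and only if the corresponding edges of $H$ share the vertex $c_i$, i.e. are adjacent in $L(H)$; the hypothesis that each vertex lies in \emph{at most two} cliques is precisely what makes the correspondence a bijection onto $E(H)$ and rules out multi-edges. The remaining points — the pendant and isolated cases, and the observation that the clique partition need not be unique (a triangle may be taken as one $3$-clique or as three $2$-cliques, matching $L(K_{1,3}) = L(K_3) = K_3$) — are routine once this equivalence is established.
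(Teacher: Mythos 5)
The paper offers no proof of this statement at all: it is quoted as a known characterization, with citations to Krausz and to Harary, and is then used as a black box (notably in the proof that the graphs $L_{14},\dots,L_{34}$ are line graphs). So there is no internal argument to compare yours against; what you have written is the classical Krausz-style proof, and it is correct. The forward direction is fine: the stars $C_v$ of $H$ are cliques in $L(H)$, simplicity of $H$ guarantees every edge of $L(H)$ lies in exactly one star, and a vertex of $L(H)$, being an edge $xy$ of $H$, lies only in $C_x$ and $C_y$. The reverse construction (one vertex $c_i$ per clique, each vertex of $G$ becoming an edge of $H$, with pendant or isolated edges for vertices lying in fewer than two cliques) is also sound, and you check the two essential points: simplicity of $H$, via the observation that two vertices sharing two cliques would place their joining edge in two parts of the partition, and the adjacency equivalence $xy \in E(G) \Leftrightarrow$ the associated edges of $H$ meet in some $c_i$. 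One small slip in the prose: at the end you credit the ``at most two cliques'' hypothesis with ruling out multi-edges, but multi-edges are excluded by the partition property (exactly the argument you give earlier); what the ``at most two'' hypothesis buys is that each vertex of $G$ can be encoded as an edge of $H$ at all, an edge having only two endpoints. This misattribution does not affect the correctness of the argument.
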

\begin{theorem}[\cite{beineke1970characterizations}]
	A graph $G$ is a line graph if and only if $G$ is $\{L_{1},\dots,L_{9}\}$-free.
\end{theorem}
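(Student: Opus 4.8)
The plan is to establish the two implications separately, using the Krausz--Harary criterion of Theorem~\ref{lineCharcaterization by harary} as the working definition of a line graph throughout.

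For the forward implication --- every line graph is $\{L_{1},\dots,L_{9}\}$-free --- I would first record that the class of line graphs is hereditary: if $G = L(F)$ and $S \subseteq V(G)$, then $G[S]$ is the line graph of the subgraph of $F$ spanned by the edges indexed by $S$. Hence it is enough to verify that none of the nine graphs $L_{1},\dots,L_{9}$ is itself a line graph, since any $G$ containing one of them as an induced subgraph would inherit a non-line-graph induced subgraph. For each $L_{i}$ I would apply Theorem~\ref{lineCharcaterization by harary} directly and show that no partition of its edges into cliques with every vertex in at most two cliques exists. The cleanest instance is $L_{1} = K_{1,3}$: the three edges at the center pairwise share only that center, so each must lie in its own clique, putting the center into three cliques. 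The remaining eight cases are a finite, routine check of the same kind.

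For the reverse implication I would construct a Krausz partition of an arbitrary $\{L_{1},\dots,L_{9}\}$-free graph $G$ and then invoke Theorem~\ref{lineCharcaterization by harary}. Since $L_{1}=K_{1,3}$, the graph $G$ is claw-free, so the neighborhood of every vertex is covered by at most two cliques; this is the local seed of the global partition. Following the van Rooij--Wilf mechanism, I would classify each triangle of $G$ as \emph{odd} or \emph{even} according to whether some vertex outside it is adjacent to an odd or an even number of its three vertices, and then use the absence of $L_{2},\dots,L_{9}$ to prove the key gluing statement: whenever two odd triangles share an edge, their four vertices induce $K_{4}$. Granting this, the maximal cliques arising from the triangle-incidence relation assemble into an edge partition in which every vertex belongs to at most two cliques, so $G$ is a line graph.

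The hard part will be the reverse direction, specifically deriving the global clique structure from a purely local forbidden-subgraph hypothesis. The delicate point is controlling how triangles overlap: I must rule out odd triangles that meet along an edge without completing to a $K_{4}$, and I must prevent any vertex from being forced into a third clique. Each such obstruction should correspond to exactly one of $L_{2},\dots,L_{9}$, so the core of the proof is an enumeration arguing that these eight graphs, together with the claw, are precisely the minimal configurations that violate the Krausz property. Carrying out that enumeration cleanly, rather than as an unstructured case split, is where the real work lies.
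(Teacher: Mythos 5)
First, a point of comparison: the paper does not prove this statement at all. It is Beineke's classical characterization, quoted with a citation to \cite{beineke1970characterizations}, and the paper's own machinery (Theorem~\ref{Theorem: characeterization} and the subsequent lemmas on critically non-line and line-split graphs) takes it as a black box. So there is no in-paper proof to measure your attempt against; your proposal has to stand on its own as a reconstruction of the classical argument, which is indeed the route you chose (Krausz partitions plus the van Rooij--Wilf odd-triangle condition).

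On those terms, your forward direction is sound: line graphs form a hereditary class, since the induced subgraph on a set of edge-vertices is the line graph of the corresponding edge-subgraph, so it suffices to check that none of $L_{1},\dots,L_{9}$ admits a partition as in Theorem~\ref{lineCharcaterization by harary}, and your $K_{1,3}$ computation is the right template for the remaining eight finite checks. The reverse direction, however, has a genuine gap: everything that makes the theorem true lies in the two steps you defer. You would need to prove (i) that an $\{L_{1},\dots,L_{9}\}$-free graph satisfies the van Rooij--Wilf condition (claw-free, and any two odd triangles sharing an edge span a $K_{4}$), and (ii) that this condition forces a Krausz partition; your sentence ``the maximal cliques arising from the triangle-incidence relation assemble into an edge partition'' is not an observation but the full strength of the van Rooij--Wilf theorem, whose proof is itself a nontrivial structural induction. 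As written, (i) is announced as an enumeration ``where the real work lies'' and (ii) is granted outright, so the proposal is a plan for the known proof rather than a proof. A smaller slip: a triangle is \emph{even} when every outside vertex is adjacent to an even number (zero or two) of its vertices, not when ``some vertex'' is; with your wording a triangle could be classified both odd and even, so the dichotomy driving the gluing lemma is not well defined.
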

We call a graph that is $\{L_{1},\dots,L_{9}\}$-exist \emph{non-line}.

\begin{figure}[ht!]
	\centering
	\begin{tikzpicture}[hhh/.style={draw=black,circle,inner sep=2pt,minimum size=0.2cm},scale=0.8]
		\begin{scope}[shift={(0,0)}]
			\node 	(graph) at (0:0cm) 	{$L_{1}$};
			\node[hhh]  (a) at (0,1) 		{};
			\node[hhh] 	(b) at (0,2) 	{};
			\node[hhh] 	(c) at (-1,0.5) 	{};
			\node[hhh] 	(d) at (1,0.5) {};
			
			\draw (b)--(a)--(c) (a)--(d)  ;
		\end{scope}	
		
		\begin{scope}[shift={(3,0)}]
			\node 	(graph) at (0:0cm) 	{$L_{2}$};
			\node[hhh]  (a) at (0,2.5) 	{};
			\node[hhh] 	(b) at (0,1.5) 	{};
			\node[hhh] 	(c) at (0,0.5)	{};
			\node[hhh] 	(d) at (-1,1) {};
			\node[hhh] 	(e) at (1,1) {};
			
			\draw (e)--(a)--(d) (e)--(b)--(d) (e)--(c)--(d) (c)--(b);
		\end{scope}
		
		\begin{scope}[shift={(6,0)}]
			\node 	(graph) at (0:0cm) 	{$L_{3}$};
			\node[hhh]  (a) at (-1,2.2) 	{};
			\node[hhh]  (f) at (1,2.2) 	{};
			\node[hhh] 	(b) at (0,1.5) 	{};
			\node[hhh] 	(c) at (0,0.5)	{};
			\node[hhh] 	(d) at (-1,1) {};
			\node[hhh] 	(e) at (1,1) {};
			
			\draw (e)--(f)--(a)--(d) (e)--(b)--(d) (e)--(c)--(d) (c)--(b);
		\end{scope}
		
		\begin{scope}[shift={(9,0)}]
			\node 	(graph) at (0:0cm) 	{$L_{4}$};
			\node[hhh]  (a) at (-1,2) {};
			\node[hhh]  (f) at (1,2) 	{};
			\node[hhh] 	(b) at (0,1.5) 	{};
			\node[hhh] 	(c) at (0,0.5)	{};
			\node[hhh] 	(d) at (-1,1) {};
			\node[hhh] 	(e) at (1,1) {};
			
			\draw (e)--(f) (a)--(d) (e)--(b)--(d) (e)--(c)--(d) (c)--(b);
		\end{scope}
		
		\begin{scope}[shift={(12,0)}]
			\node 	(graph) at (0:0cm) 	{$L_{5}$};
			\node[hhh]  (a) at (0,3.1) 	{};
			\node[hhh] 	(b) at (0,2.3) 	{};
			\node[hhh] 	(c) at (-1,1.5)	{};
			\node[hhh] 	(d) at (1,1.5) {};
			\node[hhh] 	(e) at (1,0.5) {};
			\node[hhh]  (f) at (-1,0.5) 	{};

			\draw (a)--(b)--(c)--(d)--(b) (d)--(e)--(f)--(c)--(e) (d)--(f);
		\end{scope}
		
		\begin{scope}[shift={(1.5,-4)}]
			\node 	(graph) at (0:0cm) 	{$L_{6}$};
			\node[hhh]  (a) at (1,2.5) 	{};
			\node[hhh] 	(b) at (-1,2.5) {};
			\node[hhh] 	(c) at (-1,1.5)	{};
			\node[hhh] 	(d) at (1,1.5) {};
			\node[hhh] 	(e) at (1,0.5) {};
			\node[hhh]  (f) at (-1,0.5) {};

			\draw (a)--(b)--(c)--(d)--(a)--(c) (d)--(b) (d)--(e)--(f)--(c)--(e) (d)--(f);
		\end{scope}
		
		\begin{scope}[shift={(4.5,-4)}]
			\node 	(graph) at (0:0cm) 	{$L_{7}$};
			\node[hhh]  (a) at (1,2.5) 	{};
			\node[hhh] 	(b) at (-1,2.5) {};
			\node[hhh] 	(c) at (-1,1.5)	{};
			\node[hhh] 	(d) at (1,1.5) {};
			\node[hhh] 	(e) at (1,0.5) {};
			\node[hhh]  (f) at (-1,0.5) {};

			\draw (a)--(b)--(c)--(d)--(a)--(c) (d)--(e)--(f) (d)--(f)--(c);
		\end{scope}
		
		\begin{scope}[shift={(7.5,-4)}]
			\node 	(graph) at (0:0cm) 	{$L_{8}$};
			\node[hhh]  (a) at (0,2) 	{};
			\node[hhh] 	(b) at (-1,0.5) {};
			\node[hhh] 	(c) at (1,0.5) 	{};
			\node[hhh] 	(d) at (0,1.1) 	{};
			\node[hhh] 	(e) at (0,3) 	{};
			
			\draw (a)--(b)--(c)--(a)--(d)--(b) (d)--(c)--(e)--(a)--(e)--(b);
		\end{scope}
		
		\begin{scope}[shift={(10.5,-4)}]
			\node 	(graph) at (0:0cm) 	{$L_{9}$};
			\node[hhh]  (a) at (0,1.5) 	{};
			\node[hhh] 	(b) at (0,2.5) 	{};
			\node[hhh] 	(c) at (-1,1.7)	{};
			\node[hhh] 	(d) at (-0.7,0.5) {};
			\node[hhh] 	(e) at (0.7,0.5) {};
			\node[hhh]  (f) at (1,1.7) 	{};

			\draw (b)--(c)--(d)--(e)--(f)--(b)--(a)--(c) (d)--(a)--(e) (a)--(f);
		\end{scope}
	\end{tikzpicture}	
	\caption{The forbidden subgraphs in line graphs}
	\label{fig: Forbidden graphs in line graphs}
\end{figure}
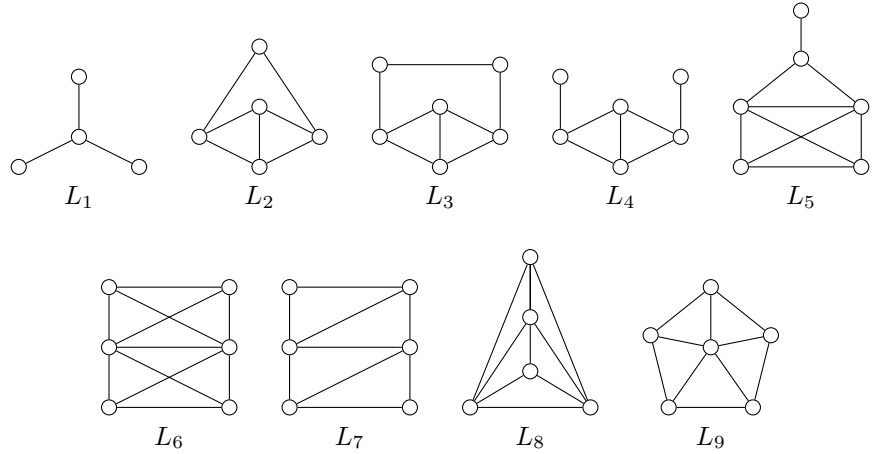

\begin{figure}[ht!]
	\centering
	\begin{tikzpicture}[hhh/.style={draw=black,circle,inner sep=2pt,minimum size=0.2cm}]
		
		\begin{scope}[shift={(0,0)}]
			\node 	   (a) at (0,0) 		{$L_{10}$};
			\node[hhh] (b1) at (-0.5,0.7)	{};
			\node[hhh] (b2) at (-0.5,1.7)	{};
			\node[hhh] (c1) at (0.5,0.5) 		{};
			\node[hhh] (c2) at (0.5,1.2) 		{};
			\node[hhh] (c3) at (0.5,2) 		{};
			
			\draw (c1) -- (b1) (c2) -- (b1)  (c3) -- (b1)
			(c1) -- (b2) (c2) -- (b2)  (c3) -- (b2);
		\end{scope}
		
		\begin{scope}[shift={(2,0)}]
			\node 	   (a) at (0,0) 		{$L_{11}$};
			\node[hhh] (b1) at (-0.5,0.5)	{};
			\node[hhh] (b2) at (-0.5,1.2)	{};
			\node[hhh] (b3) at (-0.5,2)	{};
			\node[hhh] (c1) at (0.5,0.5) 		{};
			\node[hhh] (c2) at (0.5,1.2) 		{};
			\node[hhh] (c3) at (0.5,2) 		{};
			
			\draw (c1) -- (b1) (c2) -- (b1)  (c3) -- (b1)
			(c1) -- (b2) (c2) -- (b2)  (c3) -- (b2)
			(c1) -- (b3) (c2) -- (b3)  (c3) -- (b3);
		\end{scope}
		
		\begin{scope}[shift={(4.5,0)}]
			\node 	   (a) at (0,0) 		{$L_{12}$};
			\node[hhh] (b) at (-1,1.2)	{};
			\node[hhh] (c1) at (0,0.5) 		{};
			\node[hhh] (c2) at (0,1.2) 		{};
			\node[hhh] (c3) at (0,1.9) 		{};
			\node[hhh] (d) at (1,1.5) 	{};
			
			\draw (c1) -- (b) (c2) -- (b)  (c3) -- (b) (c2)--(d)--(c3);
		\end{scope}
		
		\begin{scope}[shift={(7.5,0)}]
			\node 	   (a) at (0,0) 		{$L_{13}$};
			\node[hhh] (b) at (-1,1.2)	{};
			\node[hhh] (c1) at (0,0.5) 		{};
			\node[hhh] (c2) at (0,1.2) 		{};
			\node[hhh] (c3) at (0,1.9) 		{};
			\node[hhh] (d1) at (1,1.5) 	{};
			\node[hhh] (d2) at (1,0.8) 	{};
			
			\draw (c1) -- (b) (c2) -- (b)  (c3) -- (b) 
			(c2)--(d1)--(c3) (c1)--(d2)--(c2);
		\end{scope}
		
	\end{tikzpicture}	
	\caption{Critically non-line graphs}
	\label{fig: Extra critical line-exist graphs}
\end{figure}
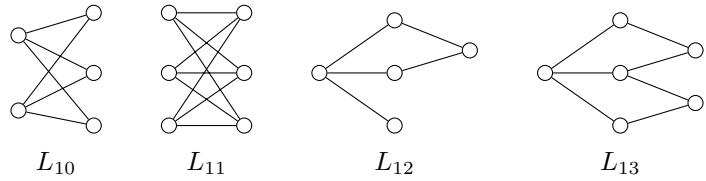


\subsubsection{Critically Non-Line Graphs}

We present the following proposition, which will be useful later.
\begin{proposition}\label{pro: claw adn a vertex outside with adajcent vertices inside}
	Let $G$ be a critically non-line graph where $\{r,s,t,u,v\} \subset V(G)$ such that $\{r,s,t\}$ induces $P_{3}$ in $G$ and $rs,st,uv \in E(G)$. If $w \in V(G) \setminus \{r,s,t,u,v\}$ is adjacent to $s$, then $w$ is adjacent to $r$ or $t$.
\end{proposition}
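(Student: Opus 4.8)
The plan is to argue by contradiction and to exhibit a single edge whose contraction preserves an induced claw, which then collides with the criticality of $G$. So I would begin by assuming that $w$ is adjacent to $s$ but to neither $r$ nor $t$, and aim to derive a contradiction with the fact that every $G$-contraction is a line graph.

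The first step is purely to read off the adjacencies inside $\{r,s,t,w\}$. Since $\{r,s,t\}$ induces a $P_{3}$, its center is $s$ and the two leaves $r,t$ are nonadjacent, while $rs,st\in E(G)$. By hypothesis $ws\in E(G)$, and under the contradiction assumption $wr,wt\notin E(G)$. Hence $s$ is adjacent to each of $r,t,w$, and $r,t,w$ are pairwise nonadjacent, so $G[\{r,s,t,w\}]$ is a claw, i.e.\ a copy of $L_{1}$ from Figure~\ref{fig: Forbidden graphs in line graphs}.

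The second step is to use the given edge $uv$ to produce a contraction that leaves this claw untouched. Because $w\in V(G)\setminus\{r,s,t,u,v\}$ and the five vertices $r,s,t,u,v$ are distinct, the set $\{r,s,t,w\}$ is disjoint from $\{u,v\}$. Contracting $uv$ only alters edges incident with $u$ or $v$, so the subgraph induced by $\{r,s,t,w\}$ is unchanged; that is, $(G/uv)[\{r,s,t,w\}]$ is again a claw. Therefore $G/uv$ is $L_{1}$-exist, hence $\{L_{1},\dots,L_{9}\}$-exist, i.e.\ non-line. This contradicts the assumption that $G$ is critically non-line, since every $G$-contraction must be a line graph. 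Consequently $w$ must be adjacent to $r$ or to $t$.

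The argument is short and there is no deep obstacle; the only point demanding genuine care is the disjointness bookkeeping in the second step. One must verify that the contracted edge $uv$ is vertex-disjoint from the four vertices of the claw, so that the forbidden subgraph truly survives the contraction—this is precisely what the hypothesis supplies by positing an edge $uv$ lying outside $\{r,s,t,w\}$, and it is the reason the edge $uv$ (rather than, say, $rs$ or $st$, whose contraction would destroy the claw) appears in the statement at all.
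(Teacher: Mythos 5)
Your proof is correct and follows essentially the same route as the paper: assume $w$ is adjacent to $s$ but to neither leaf, observe that $\{r,s,t,w\}$ then induces a claw disjoint from the edge $uv$, and contradict the criticality of $G$. The only cosmetic difference is that the paper cites Theorem~\ref{Theorem: critical H-exist: not S is independent} (independence of $V(G)\setminus S$ for any set $S$ inducing a forbidden subgraph) for the final step, whereas you inline the proof of that theorem's first part by contracting $uv$ explicitly and noting that the claw survives in $G/uv$.
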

\begin{proof}
	For the sake of contradiction, assume that $w \in V(G) \setminus \{r,s,t,u,v\}$ is adjacent to $s$ but neither $r$ nor $t$. However, $\{r,s,t,w\}$ is a claw while $u,v$ are two adjacent vertices outside the claw, which is a contradiction to Theorem \ref{Theorem: critical H-exist: not S is independent}.
\end{proof}

\subsubsection{The critically non-line graphs that are \texorpdfstring{$L_{1}$}{L1}-exist}
\begin{lemma}\label{lem: L1 critical line-exist}
	The graphs $L_{1},L_{10},\dots,L_{13}$ are the only critically non-line graphs that are $L_{1}$-exist.
\end{lemma}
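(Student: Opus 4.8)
The plan is to reduce the statement to the classification of critically claw-exist graphs in Proposition~\ref{Proposition: EC(claw)} and then sieve that six-element list with an elementary vertex-count argument. The key reduction is this: since $L_{1}$ is exactly the claw, a critically non-line graph $G$ that is $L_{1}$-exist contains an induced claw, so $G$ is claw-exist. On the other hand, because $G$ is critically non-line, every $G$-contraction is $\{L_{1},\dots,L_{9}\}$-free, hence a line graph, and every line graph is claw-free; thus every $G$-contraction is claw-free. Therefore $G$ is claw-exist while all of its contractions are claw-free, i.e. $G$ is critically claw-exist, and Proposition~\ref{Proposition: EC(claw)} forces $G$ to be one of the six graphs $H_{1},\dots,H_{6}$ of Figure~\ref{Figure: the graphs in EC(claw)}. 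Identifying these, $H_{1}=L_{1}$ (the claw), $H_{2}=L_{10}=K_{2,3}$, $H_{3}=L_{11}=K_{3,3}$, $H_{4}=L_{12}$, $H_{5}=L_{13}$, and $H_{6}$ is the remaining graph; it only remains to decide which of them are actually critically non-line.

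The second step makes this decision finite and light. Each candidate has at most six vertices, so each of its contractions has at most five vertices, and each such contraction is claw-free (being a line graph by the reduction above). Among $L_{2},\dots,L_{9}$ only $L_{2}$ and $L_{8}$ have five vertices, the rest having six; hence a claw-free graph on at most five vertices is non-line if and only if it is isomorphic to $L_{2}$ or to $L_{8}$. For $H_{1}$, $H_{2}$, $H_{4}$ every contraction has at most four vertices and is therefore automatically a line graph, so each of these is critically non-line. For $H_{3}=K_{3,3}$ every contraction is the wheel on five vertices, which has eight edges, and for $H_{5}=L_{13}$ a direct check shows every contraction has six edges; since $L_{2}$ has seven edges and $L_{8}$ has nine, none of these contractions is $L_{2}$ or $L_{8}$, so $H_{3}$ and $H_{5}$ are critically non-line as well. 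This establishes that $L_{1},L_{10},L_{11},L_{12},L_{13}$ all belong to the list.

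The final step rules out $H_{6}$ by producing a single bad contraction. Contracting the edge $dc_{2}$ identifies $d$ and $c_{2}$ into a new vertex $w$ adjacent to $c_{3},b_{1},b_{2}$; the resulting five-vertex graph has the nonadjacent pair $\{b_{1},b_{2}\}$ each joined to all of $c_{1},w,c_{3}$, together with the single edge $wc_{3}$. This is precisely $K_{2,3}$ with one extra edge inside its three-element side, i.e. $L_{2}$. As $L_{2}$ is non-line, $H_{6}$ has a non-line contraction and so is not critically non-line; hence it is excluded. Combining the three steps yields exactly $L_{1},L_{10},\dots,L_{13}$.

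I expect the only genuine obstacle to be the explicit identification of the five-vertex contractions, in particular pinning down $H_{6}/dc_{2}$ as $L_{2}$ (rather than a line graph) while confirming that no contraction of $K_{3,3}$ or $L_{13}$ accidentally realizes $L_{2}$ or $L_{8}$. I would keep these checks routine by comparing edge counts and degree sequences rather than searching for isomorphisms by hand, which is exactly what the five-versus-six vertex gap among the forbidden graphs makes possible.
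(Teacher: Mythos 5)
Your proposal takes essentially the same route as the paper's proof: reduce to the six critically claw-exist graphs of Proposition~\ref{Proposition: EC(claw)}, verify that $H_{1},\dots,H_{5}$ (that is, $L_{1},L_{10},\dots,L_{13}$) are critically non-line via small-order and counting checks, and exclude $H_{6}$ by exhibiting a contraction isomorphic to $L_{2}$ --- with your edge-count arguments merely standing in for the paper's independence-number and degree arguments. One justification should be repaired: the contractions of the candidates are claw-free because the candidates are critically claw-exist (by definition), not ``being a line graph by the reduction above,'' which is circular since whether those contractions are line graphs is precisely what is being decided; with that fix the argument is correct.
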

\begin{proof}
	By Proposition \ref{Proposition: EC(claw)}, it was proved that the graphs in Figure \ref{Figure: the graphs in EC(claw)} are the graphs that are critically claw-exist. Hence, we identify which of these graphs are critically non-line.
	The graph $L_{1}$ is trivially critically non-line. Any $L_{10}$-contraction and $L_{12}$-contraction has independence number $2$ and order $4$. Thus, any $L_{10}$-contraction and $L_{12}$-contraction is claw-free. Thus, $L_{10}$ and $L_{12}$ are critically non-line. Any $L_{11}$-contraction has independence number $2$, order $5$, and maximum degree $4$. Thus, any $L_{11}$-contraction is claw-free and $L_{2}$-free. Thus, $L_{11}$ is a critically non-line. Any $L_{13}$-contraction has independence number $2$ and, order $5$, and either maximum degree $4$ or three vertices of degree $2$. Thus, any $L_{13}$-contraction is $L_{1}$-free and $L_{2}$-free. Thus, $L_{13}$ is a critically non-line. Finally, we note that contracting an edge incident to the vertex of degree $2$ in graph $H_{6}$ in Figure \ref{Figure: the graphs in EC(claw)} construct a graph that is isomorphic to $L_{2}$. Thus, that graph is not critically non-line.
\end{proof}

\subsubsection{The critically non-line graphs that are $L_{2}$-exist}
\begin{lemma}\label{lem: L2 critical line-exist}
	The graph $L_{2}$ is the only critically non-line graph that is $L_{2}$-exist.
\end{lemma}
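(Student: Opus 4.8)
The plan is to prove the two directions separately: show that a critically non-line graph $G$ that is $L_2$-exist can contain no vertex outside a fixed induced copy of $L_2$, and then record the (finite) verification that $L_2$ itself is critically non-line. Fix $S=\{a,b,c,d,e\}\subseteq V(G)$ with $G[S]\cong L_2$, labelled as in Figure \ref{fig: Forbidden graphs in line graphs} so that $\{d,e\}$ and $\{a,b,c\}$ are the two sides of the underlying $K_{2,3}$, the extra edge is $bc$, and $a$ is the vertex of degree two. By Theorem \ref{Theorem: critical H-exist: not S is independent}, $V(G)\setminus S$ is independent, so every $w\in V(G)\setminus S$ has $N(w)\subseteq S$, and no such $w$ is corner dominated by a vertex of $S$. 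I would assume toward a contradiction that some $w\in V(G)\setminus S$ exists and analyse its trace $X:=N_S(w)$.

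First I would translate the two structural restrictions into constraints on $X$. Since $w$ would be corner dominated by $s$ whenever $X\subseteq\{s\}\cup N_S(s)$, the corner-freeness clause yields $b\in X\Rightarrow a\in X$, $c\in X\Rightarrow a\in X$, $d\in X\Leftrightarrow e\in X$, and $a\in X\Rightarrow(b\in X\text{ or }c\in X)$, while Corollary \ref{coro: no vertex adajcet to 1 or 2 or 3 or max degree in critical graph} gives $|X|\ge 2$. Next, $L_2$ decomposes in several ways as an induced $P_3$ together with a disjoint edge (e.g.\ the path on $a,d,b$ with disjoint edge $ce$; the path on $a,d,c$ with disjoint edge $be$; the two symmetric configurations using $e$; and the path on $d,a,e$ with disjoint edge $bc$), so Proposition \ref{pro: claw adn a vertex outside with adajcent vertices inside} forces $a\in X\Rightarrow(d\in X\text{ or }e\in X)$, $d\in X\Rightarrow(a\in X\text{ or }b\in X)$, $d\in X\Rightarrow(a\in X\text{ or }c\in X)$, and the symmetric statements for $e$. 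A short case split on whether $a\in X$ then shows that the only surviving neighbourhoods are $X=\{a,b,d,e\}$, $X=\{a,c,d,e\}$, and $X=\{a,b,c,d,e\}$: if $a\notin X$ the corner constraints force $X\subseteq\{d,e\}$, and the candidate $\{d,e\}$ is killed by the $P_3$-plus-edge constraint $d\in X\Rightarrow(a\in X\text{ or }b\in X)$; if $a\in X$ then $b$ or $c$ lies in $X$ (corner) and, since $a$ forces one of $d,e$ and hence both, $X$ must be one of the three listed sets.

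To eliminate the three survivors I would exhibit, in each case, an edge whose contraction produces the Beineke graph $L_8$, which is $K_5$ minus an edge, contradicting that every $G$-contraction is a line graph. For $X=\{a,b,d,e\}$, and symmetrically (under the automorphism swapping $b$ and $c$) for $X=\{a,c,d,e\}$, contracting $ad$ merges $a$ and $d$ into a vertex adjacent to all of $b,c,e,w$; among $\{b,c,e,w\}$ every pair is adjacent except $cw$ (respectively $bw$), so these five vertices induce $K_5$ minus one edge, i.e.\ $L_8$. For $X=\{a,b,c,d,e\}$, contracting $wa$ merges $w$ and $a$ into a vertex adjacent to all of $b,c,d,e$, and since $\{b,c,d,e\}$ already induces $K_4$ minus its single non-edge $de$, the result is again $K_5$ minus the edge $de$, i.e.\ $L_8$. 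Because each induced copy of $L_8$ uses only the listed vertices, it persists regardless of the rest of $G$, so every case contradicts critical non-lineness; hence no $w$ exists and $G=L_2$.

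Finally I would settle the converse by checking the three edge-orbits of $L_2$ under $\mathrm{Aut}(L_2)$, namely $\{ad,ae\}$, $\{bc\}$, and $\{bd,be,cd,ce\}$: contracting a representative of each yields $K_4$, $C_4$, and $K_4$ minus an edge, all of which are line graphs by Theorem \ref{lineCharcaterization by harary}, so $L_2$ is indeed critically non-line. I expect the main obstacle to be precisely the three ``dense'' neighbourhoods above: they satisfy every necessary condition coming from corner domination and from the claw/$P_3$ arguments, and in fact remain claw-free after many contractions, so ruling them out genuinely requires locating the single contraction that exposes the denser forbidden graph $L_8$ rather than a claw.
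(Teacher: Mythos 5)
Your proof is correct and takes essentially the same route as the paper's: both use Theorem \ref{Theorem: critical H-exist: not S is independent}, Corollary \ref{coro: no vertex adajcet to 1 or 2 or 3 or max degree in critical graph}, and Proposition \ref{pro: claw adn a vertex outside with adajcent vertices inside} to reduce the possible neighborhoods of an outside vertex to the same three dense sets, and both then eliminate those survivors by exhibiting an induced $L_{8}$ in a suitable contraction, finishing with a verification that $L_{2}$ itself is critically non-line. The only differences are cosmetic: the paper's single contraction (of the edge $cd$ in your labelling) handles all three surviving cases at once where you use $ad$ and $wa$, and your explicit edge-orbit check of the converse replaces the paper's counting argument (every $L_{2}$-contraction has order $4$ and independence number at most $2$, hence is claw-free and thus line).
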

\begin{proof}
	For the sake of contradiction, we assume that there is a critically non-line $G$ that is non-isomorphic to $L_{2}$ where $G$ has a vertex set $S:=\{r,s,t,u,v\}$ that induces $L_{2}$ as in the following figure on right.

	\begin{minipage}{0.75\textwidth}
		Let $w \in V(G) \setminus S$. By Theorem \ref{Theorem: critical H-exist: not S is independent}, we note that $V(G) \setminus S$ is independent. By Corollary \ref{coro: no vertex adajcet to 1 or 2 or 3 or max degree in critical graph}, the neighborhood of $w$ is neither one vertex nor a set of two adjacent vertices. If the neighborhood of $w$ is two nonadjacent vertices, then $w$ is adjacent to a non leaf vertex in a $P_{3}$, but not to a leaf of the path, which contradicts Proposition \ref{pro: claw adn a vertex outside with adajcent vertices inside}. 	
	\end{minipage}\hfill
	\begin{minipage}{0.2\textwidth}
		
		\centering
		\begin{tikzpicture}[hhh/.style={draw=black,circle,inner sep=1pt,minimum size=0.1cm},scale=0.7]
			\node[hhh]  (r) at (0,2.5) 	{$r$};
			\node[hhh] 	(s) at (0,1.5) 	{$s$};
			\node[hhh] 	(t) at (0,0.5)	{$t$};
			\node[hhh] 	(u) at (-1,1) {$u$};
			\node[hhh] 	(v) at (1,1) {$v$};
			
			\node 	(g) at (0,-0.2) {$G[S]$};
			
			\draw (v)--(r)--(u) (v)--(s)--(u) (v)--(t)--(u) (t)--(s);
		\end{tikzpicture}
	\end{minipage}
	By Corollary \ref{coro: no vertex adajcet to 1 or 2 or 3 or max degree in critical graph}, the neighborhood of $w$ is not three vertices that induce either $P_{3}$ or $C_{3}$. If $N(w)=\{r,s,t\}$, then $w$ is adjacent to a nonleaf vertex $r$ while $\{r,u,v\}$ induces $P_{3}$ and $s,t$ are two adjacent vertices, which contradicts Proposition \ref{pro: claw adn a vertex outside with adajcent vertices inside}. If $N(w)=S \setminus \{u\}$, $N(w)=S \setminus \{v\}$, or $N(w)=S \setminus \{r\}$, then $w$ is a corner dominated by a vertex in $S$ while $S$ induces $L_{2}$, which is a contradiction to Theorem \ref{Theorem: critical H-exist: not S is independent}. If $\{r,s,u,v\} \subseteq N(w)$ (or similarly $\{r,t,u,v\} \subseteq N(w)$), then in $G/tu$ the set $S \cup\{w\}$ induces $L_{8}$, which is a contradiction to the assumption that $G$ is a critically non-line graph. Finally, we prove that $L_{2}$ is a critically non-line graph to complete the proof. Any $L_{2}$-contraction has independence number at most $2$ and order $4$. Thus, any $L_{2}$-contraction is claw-free. Thus, $L_{2}$ is a critically non-line graph.
\end{proof}

\subsubsection{The critically non-line graphs that are $L_{3}$-exist}
\begin{lemma}\label{lem: L3 is not critical line-exist}
	There is no critically non-line graph that is $L_{3}$-exist.
\end{lemma}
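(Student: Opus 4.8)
The plan is a short proof by contradiction that exploits the structural fact that $L_3$ is exactly $L_2$ with one edge subdivided. Labelling the vertices of an induced $L_3$ as $S=\{a,b,c,d,e,f\}$ following Figure~\ref{fig: Forbidden graphs in line graphs}, the vertex $f$ is the degree-two vertex whose only neighbours in $S$ are $e$ and $a$, so that the path $e$–$f$–$a$ plays the role of the single edge $ea$ of $L_2$; collapsing this path should return $L_2$. So I would assume toward a contradiction that $G$ is a critically non-line graph having an induced $L_3$ on such a set $S$.

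First I would contract the edge $ef$, writing $x:=V(G/ef)\setminus V(G)$ for the new vertex, and examine the induced subgraph of $G/ef$ on the image $(S\cup\{x\})\setminus\{e,f\}=\{a,b,c,d,x\}$ of $S$ under the contraction map defined before Theorem~\ref{Theorem: constructive edge characterization}. Because these adjacencies are determined entirely by the edges of $G[S]\cong L_3$, I can read them off directly: among $\{a,b,c,d\}$ the edges are $ad,bd,cd,cb$, and $x$ inherits $fa$ together with $eb,ec$, so $x$ is adjacent to exactly $a,b,c$ and not to $d$. A one-line check then identifies $G/ef[\{a,b,c,d,x\}]$ with $L_2$: both $d$ and $x$ are adjacent to all of $a,b,c$, and $bc$ is the only edge inside that triple.

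Since $L_2$ is one of the forbidden subgraphs $L_1,\dots,L_9$, the graph $G/ef$ is non-line, contradicting the hypothesis that every $G$-contraction is a line graph; hence no critically non-line graph can be $L_3$-exist (in particular $L_3$ itself fails to be critically non-line). The argument carries no real obstacle: the only point requiring care is that the induced subgraph on $\{a,b,c,d,x\}$ is unaffected by any vertices of $G$ lying outside $S$, which is immediate from the definitions of edge contraction and of an induced subgraph. One could just as well contract $fa$ in place of $ef$, since either choice collapses the subdivided path back to the defining edge of $L_2$.
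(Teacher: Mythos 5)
Your proof is correct and takes essentially the same approach as the paper: contract one edge of the subdivided path in the induced $L_{3}$ so that the image of $S$ induces $L_{2}$ in the contraction, contradicting the assumption that every $G$-contraction is line. The only (immaterial) difference is the choice of edge: the paper contracts the edge joining the two degree-$2$ vertices of $L_{3}$ (your $fa$), while you contract $ef$; as you yourself observe, either collapse recovers $L_{2}$.
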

\begin{proof}
	For the sake of contradiction assume that there is a critically non-line graph $G$ that is $L_{3}$-exist. Let $S$ be a vertex set in $G$ that induces a graph isomorphic to $L_{3}$ where $u,v \in S$ such that $deg_{S}(u)= deg_{S}(v)=2$. However, $G/uv[S]$ induces a graph isomorphic to $L_{2}$, which contradicts the assumption that $G$ is a critically non-line.
\end{proof}

\subsubsection{The critically non-line graphs that are $L_{4}$-exist}
\begin{lemma}\label{lem: L4 critical line-exist}
	The graph $L_{4}$ is the only critically non-line graph that is $L_{4}$-exist.
\end{lemma}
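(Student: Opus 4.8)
The plan is to fix an induced copy of $L_4$ on a set $S=\{a,b,c,d,e,f\}$, where $\{b,c,d,e\}$ induces the diamond (the two triangles $bcd$ and $bce$ sharing the edge $bc$, with $d,e$ the non-adjacent tips) and $a,f$ are pendant vertices attached to $d$ and $e$ respectively. Assuming $G\not\cong L_4$, there is a vertex $w\in V(G)\setminus S$, and by Theorem \ref{Theorem: critical H-exist: not S is independent} the set $V(G)\setminus S$ is independent, so $N(w)\subseteq S$. Since $\mathrm{Aut}(L_4)\cong\mathbb{Z}_2\times\mathbb{Z}_2$ acts with orbits $\{a,f\}$, $\{b,c\}$, $\{d,e\}$, it suffices to rule out one representative of each orbit of admissible neighbourhoods $N(w)$. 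Two families of constraints drive the enumeration. First, Corollary \ref{coro: no vertex adajcet to 1 or 2 or 3 or max degree in critical graph} together with the no-corner clause of Theorem \ref{Theorem: critical H-exist: not S is independent} forces $|N(w)|\ge 2$ and $G[N(w)]$ to have no universal vertex (a universal vertex would make $w$ a corner dominated by it). Second, Proposition \ref{pro: claw adn a vertex outside with adajcent vertices inside}, applied to the paths $a\!-\!d\!-\!b$ and $a\!-\!d\!-\!c$ (each admitting a disjoint edge inside $S$, namely $ce$ and $be$) and their mirror images through $e$, yields the implications: if $w\sim d$ then $w\sim a$ or $w$ is adjacent to both of $b,c$; symmetrically, if $w\sim e$ then $w\sim f$ or $w$ is adjacent to both of $b,c$.

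For small neighbourhoods these constraints largely suffice. Whenever $N(w)$ contains one of the two independent triples $\{a,b,f\}$, $\{a,c,f\}$, the vertex $w$ is the centre of a claw while $S$ retains an edge outside it ($cd$, resp. $bd$), contradicting Theorem \ref{Theorem: critical H-exist: not S is independent}; this kills many cases outright. The remaining neighbourhoods of size $2$ and $3$ are eliminated either directly by the two implications above (for example $N(w)=\{d,e\}$ and $N(w)=\{a,e\}$ both violate them) or by a single contraction: e.g.\ for $N(w)=\{a,f\}$ contracting the pendant edge $ad$ turns $G[S\cup\{w\}]$ into an induced $L_3$, while for $N(w)=\{a,b\}$ or $\{a,b,c\}$ contracting $be$ produces an induced claw. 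In each such case the forbidden $L_i$ appears in $G/e'$, contradicting that $G$ is critically non-line.

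The bulk of the work, and the main obstacle, is the case $|N(w)|=4$, where the constraints no longer determine a contradiction and one must choose, case by case, the right edge to contract. After pruning by the claw argument and the two implications, the survivors fall (up to symmetry) into four types, and the contraction exposing a forbidden subgraph differs for each: contracting $bc$ when $N(w)=\{b,c,d,e\}$ reproduces an induced $L_4$; contracting the pendant edge $ad$ when $N(w)=\{a,d,e,f\}$ produces an induced $L_2$; contracting $wf$ when $N(w)=\{b,c,d,f\}$ produces an induced $L_8$ (the graph $K_5$ minus an edge); and contracting $wa$ when $N(w)=\{a,b,c,d\}$ produces an induced $L_5$. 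Checking these isomorphisms is the delicate part, precisely because $L_4$ is itself claw-free and ``barely'' non-line, so most contractions stay line-like and the forbidden configuration surfaces only for the correct choice. Finally, the neighbourhoods of size $5$ and $6$ are handled uniformly: there $\{w,b,c,d,e\}$ already induces $L_8$, and one pendant ($a$ or $f$) is then a corner dominated by its tip ($d$ or $e$), again contradicting the no-corner clause of Theorem \ref{Theorem: critical H-exist: not S is independent}.

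It remains to confirm that $L_4$ itself is critically non-line, i.e.\ that every contraction is a line graph. Its edges form three orbits under $\mathrm{Aut}(L_4)$: the pendant edges $\{ad,ef\}$, the tip edges $\{bd,cd,be,ce\}$, and the central edge $bc$. Contracting $bc$ yields the path $P_5$; contracting $bd$ yields a triangle with two pendants attached at distinct vertices; and contracting $ad$ yields a diamond with a single pendant on a tip. Each of these admits a Krausz edge-clique partition with every vertex in at most two cliques, hence is a line graph by Theorem \ref{lineCharcaterization by harary}, so $L_4$ is critically non-line and the proof is complete.
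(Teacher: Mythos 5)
Your strategy coincides with the paper's: fix the induced $L_{4}$ on $S$, use Theorem~\ref{Theorem: critical H-exist: not S is independent} to force $N(w)\subseteq S$ for an outside vertex $w$, run a case analysis on $N(w)$ up to $\mathrm{Aut}(L_{4})$ using the no-corner condition, Corollary~\ref{coro: no vertex adajcet to 1 or 2 or 3 or max degree in critical graph}, Proposition~\ref{pro: claw adn a vertex outside with adajcent vertices inside} and ad hoc contractions, and finally verify that every $L_{4}$-contraction is a line graph. Your $|N(w)|=4$ analysis is complete: the four orbit representatives you list are exactly the $4$-subsets surviving your pruning rules, and all four exhibited contractions do produce the claimed forbidden subgraphs (though $\{b,c,d,e\}$ and $\{a,b,c,d\}$ need no contraction at all, since $b$, respectively $d$, is universal in $G[N(w)]$ and the no-corner rule already kills them). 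The closing verification that $L_{4}$ is critically non-line is also sound and matches the paper's.

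There is, however, a genuine gap in the $|N(w)|=3$ enumeration: the orbit $N(w)=\{a,d,f\}$ (equivalently $\{a,e,f\}$), i.e.\ $w$ adjacent to both pendants and one tip. This case survives every tool you state: Corollary~\ref{coro: no vertex adajcet to 1 or 2 or 3 or max degree in critical graph} does not apply ($\{a,d,f\}$ induces one edge plus an isolated vertex, not $P_{3}$ or $C_{3}$); $G[N(w)]$ has no universal vertex; both of your implications hold because $w\sim a$ and $w\sim f$; and no claw through $w$ exists anywhere, since $a\sim d$ and the only candidate centers $a,f$ have degree $2$, so neither your claw rule nor Proposition~\ref{pro: claw adn a vertex outside with adajcent vertices inside} can be invoked. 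It is also not among the cases you dispatch by contraction, so your proof simply never eliminates it. It can be eliminated --- note that $\{w\}\cup S\setminus\{a\}$ induces $L_{3}$ (the diamond $\{b,c,d,e\}$ plus the path $d\!-\!w\!-\!f\!-\!e$ joining its tips), so Lemma~\ref{lem: L3 is not critical line-exist} applies; equivalently, contracting $wf$ creates a vertex adjacent to $d$ and $e$ but not to $b,c$, giving an induced $L_{2}$ on the diamond plus the merged vertex --- and this is precisely how the paper handles it. A smaller slip of the same kind: your ``uniform'' treatment of $|N(w)|=5$ fails as stated when the missing vertex is $b$ or $c$ (then $\{w,b,c,d,e\}$ is not $L_{8}$); those cases only die because they contain the independent triple $\{a,c,f\}$ or $\{a,b,f\}$, so they must be routed through your claw rule rather than the $L_{8}$-plus-corner argument.
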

\begin{proof}
	For the sake of contradiction, we assume that there is a critically non-line graph $G$ that is non-isomorphic to $L_{4}$ where $G$ has a vertex set $S$ that induce $L_{4}$.

	\begin{minipage}{0.75\textwidth}
		Assume that $S:=\{r,s,t,u,v,w\}$ as presented in the following figure on right. Let $x \in V(G) \setminus S$. By Theorem \ref{Theorem: critical H-exist: not S is independent}, we note that $V(G) \setminus S$ is independent. By Corollary \ref{coro: no vertex adajcet to 1 or 2 or 3 or max degree in critical graph}, the neighborhood of $x$ is not one vertex in $S$. According to the cardinality of the neighborhood of $x$ we partition the proof to cases.
	\end{minipage}\hfill
	\begin{minipage}{0.2\textwidth}
		
		\centering
		\begin{tikzpicture}[hhh/.style={draw=black,circle,inner sep=1pt,minimum size=0.2cm},scale=0.7]
			\node[hhh]  (r) at (-1,2) {$r$};
			\node[hhh]  (w) at (1,2) 	{$w$};
			\node[hhh] 	(s) at (0,1.5) 	{$s$};
			\node[hhh] 	(t) at (0,0.5)	{$t$};
			\node[hhh] 	(u) at (-1,1) {$u$};
			\node[hhh] 	(v) at (1,1) {$v$};
			
			\node 	(g) at (0,-0.2) {$G[S]$};
			
			\draw (v)--(w) (r)--(u) (v)--(s)--(u) (v)--(t)--(u) (t)--(s);
		\end{tikzpicture}
	\end{minipage}
	
	Case $1$: The cardinality of the neighborhood of $x$ is $2$. By Corollary \ref{coro: no vertex adajcet to 1 or 2 or 3 or max degree in critical graph}, $x$ is nonadjacent to two adjacent vertices. If $N(x)=\{r,w\}$, then $G/rx[S \cup \{x\}]$ induces $L_{3}$, which is a contradiction to the fact that $G$ is a critically non-line graph. If $x$ is adjacent to $u$ (or similarly to $v$), then $\{r,s,u,x\}$ induces a claw while $t$ is a corner dominated by $s$ which is a contradiction to Theorem \ref{Theorem: critical H-exist: not S is independent}. Otherwise, $x$ is adjacent to $s$ (or similarly $t$) in which $\{s,u,v,x\}$ induces a claw while $t$ is a corner dominated by $s$ which is a contradiction to Theorem \ref{Theorem: critical H-exist: not S is independent}. Thus, $|N(x)| \not= 2$.

	Case 2:	The cardinality of the neighborhood of $x$ is $3$. By Corollary \ref{coro: no vertex adajcet to 1 or 2 or 3 or max degree in critical graph}, $x$ is nonadjacent to three vertices that induce either $P_{3}$ or $C_{3}$. If $N(x)=\{r,s,w\}$ (or similarly $\{r,t,w\}$), then $\{r,s,w,x\}$ induces a claw while $t$ is a corner dominated by $s$, which is a contradiction to Theorem \ref{Theorem: critical H-exist: not S is independent}. If $N(x)=\{r,u,w\}$ (or similarly $N(x)=\{r,v,w\}$), then $\{x\} \cup S \setminus \{r\}$ induces $L_{3}$ which is a contradiction to Lemma \ref{lem: L3 is not critical line-exist}. If $N(x)=\{r,u,v\}$ (or similarly $N(x)=\{u,v,w\}$), then $\{s,v,w,x\}$ induces a claw while $t$ is a corner dominated by $s$, which is a contradiction to Theorem \ref{Theorem: critical H-exist: not S is independent}. if $N(x)=\{r,s,v\}$ (or similarly $\{r,t,v\},\{s,u,w\},\{t,u,w\}$), then $G/ru[\{t,v,w,x\}]$ induces a claw, which is a contradiction to the fact that $G$ is a critically non-line graph. 
	Finally, if $N(x)=\{r,s,t\}$ (or similarly $N(x)=\{s,t,w\}$), then in $G/vw$, $\{t,u,v,x\}$ induces a claw, which is a contradiction to the assumption that $G$ is a critically non-line. Thus, $|N(x)| \not= 3$.

	Case 3:	The cardinality of the neighborhood of $x$ is at least $4$.
	The neighborhood of $x$ is neither $\{r,s,t,u\}$ nor $\{s,t,v,w\}$, otherwise $x$ is a corner dominated by $u$ or $v$ respectively, which contradicts Theorem \ref{Theorem: critical H-exist: not S is independent}. If $\{r,s,w\} \subseteq N(x)$ (or similarly $\{r,t,w\} \subseteq N(x)$), then $\{r,s,w,x\}$ induces a claw while $t$ is a corner dominated by $s$, which is a contradiction to Theorem \ref{Theorem: critical H-exist: not S is independent}. If $N(x)=\{r,u,v,w\}$, then $\{x\} \cup S \setminus \{r,w\}$) induces $L_{2}$ while $r$ is a corner dominated by $x$, which is a contradiction to Theorem \ref{Theorem: critical H-exist: not S is independent}. If $N(x)=\{r,s,u,v\}$ (similarly $N(x)=\{r,t,u,v\}$, $N(x)=\{s,u,v,w\}$, or $N(x)=\{t,u,v,w\}$), then $\{t,v,w,x\}$ induces a claw while $s$ is a corner dominated by $v$, which is a contradiction to Theorem \ref{Theorem: critical H-exist: not S is independent}. If $N(x)=\{r,s,t,v\}$ (or similarly $N(x)=\{s,t,u,w\}$), then in $G/vx$, $\{r,s,t,u,x\}$ induces $L_{2}$, which is a contradiction to the assumption that $G$ is a critically non-line graph.
	If $S \setminus \{r,w\} \subseteq N(x)$, then $\{x\} \cup S \setminus \{r,w\}$ induces $L_{8}$ while $r$ is a corner dominated by $u$, which is a contradiction to Theorem \ref{Theorem: critical H-exist: not S is independent}. Consequently, $|N(x)| \not\geq 4$.
	
	Finally, we prove that $L_{4}$ is a critically non-line to complete the proof. Any $L_{4}$-contraction has a vertex with degree $1$, its order is $5$, independence number at most $3$ and no vertex with degree three has more than two independent vertices in its neighborhood. Hence, any $L_{4}$-contraction is $\{L_{1},L_{2}\}$-free. Thus, $L_{4}$ is a critically non-line graph.
\end{proof}

\subsubsection{The critically non-line graphs that are $L_{5}$-exist}
\begin{lemma}\label{lem: L5 critical line-exist}
	The graph $L_{5}$ is the only critically non-line graph that is $L_{5}$-exist.
\end{lemma}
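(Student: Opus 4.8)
The plan is to run the neighborhood analysis already used for $L_1,\dots,L_4$. Suppose for contradiction that $G$ is critically non-line, $G\not\cong L_5$, and that some $S\subseteq V(G)$ induces $L_5$. Fix notation so that $S=\{a,b,c,d,e,f\}$, where $\{c,d,e,f\}$ induces a $K_4$, the vertex $b$ is adjacent to exactly $c$ and $d$, and $a$ is a pendant neighbor of $b$. A direct check gives $\mathrm{Aut}(L_5)=\langle(c\,d),(e\,f)\rangle$, so the orbits on $S$ are $\{a\},\{b\},\{c,d\},\{e,f\}$, and it suffices to treat one representative of each candidate set $N(x)$. By Theorem~\ref{Theorem: critical H-exist: not S is independent} the set $V(G)\setminus S$ is independent, so every $x\in V(G)\setminus S$ has $N(x)\subseteq S$; the aim is to show no such $x$ exists, forcing $G=G[S]\cong L_5$.

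First I would apply the cheap filters. Corollary~\ref{coro: no vertex adajcet to 1 or 2 or 3 or max degree in critical graph} forbids $|N(x)|=1$, forces a two-element $N(x)$ to be nonadjacent, and forbids a three-element $N(x)$ from inducing $P_3$ or $C_3$; since $\alpha(L_5)=2$, no three vertices of $S$ are independent, so an admissible triple has exactly one edge. The decisive filter is Proposition~\ref{pro: claw adn a vertex outside with adajcent vertices inside}, applied to an induced path of $L_5$ together with a disjoint edge of the $K_4$. The paths $\{a,b,c\},\{a,b,d\}$ give ``$x\sim b$ and $x\not\sim a$ imply $x\sim c$ and $x\sim d$'', while $\{b,c,e\},\{b,c,f\}$ and their $d$-analogues give ``$x\sim c$ and $x\not\sim b$ imply $x\sim e,f$'' (symmetrically for $d$). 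These implications already eliminate most candidate sets, for instance every $N(x)$ containing $c$ but none of $b,e,f$.

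For the survivors I would use two reductions, both kept inside $S\cup\{x\}$ so that unknown vertices of $G$ cannot interfere. The first is corner domination of the outside vertex: because $N(x)$ is fully known, whenever $N[x]\subseteq N[v]$ for some $v\in S$ we contradict part~(2) of Theorem~\ref{Theorem: critical H-exist: not S is independent}; this removes, for instance, $N(x)\in\{\,\{a,b,c,d\},\{b,c,d,e\},\{c,d,e,f\},\{b,c,d,e,f\}\,\}$. The second exhibits a member of $\{L_1,\dots,L_9\}$ on a determined vertex set: either already present (when $x$ meets three vertices of the $K_4$ but not the fourth, $\{c,d,e,f,x\}$ induces $L_8=K_5-e$ and the leftover pair $a,b$ is adjacent, contradicting part~(1) of Theorem~\ref{Theorem: critical H-exist: not S is independent}), or produced by contracting one edge of $S\cup\{x\}$ onto an induced $L_2$ or $L_8$: e.g.\ $N(x)=\{a,e\}$ and $N(x)=\{a,b,e,f\}$ give $L_2$ after contracting $ab$ and $ax$ respectively, $N(x)=\{a,b,c,e\}$ yields a second induced $L_5$ on $\{a,x,c,d,e,f\}$ whose center $x$ is adjacent to $b$ so that contracting $bx$ gives $L_8$, and $N(x)=S$ gives $L_8$ after contracting $ef$. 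In each contraction case $G/e'$ then contains a forbidden subgraph, contradicting that $G$ is critically non-line.

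The main obstacle, and most of the work, is $|N(x)|=4$: there are fifteen candidate sets (a handful up to symmetry), where the Proposition or corner domination kills most, but each survivor needs its own contraction or its own already-present $L_2$ or $L_8$, with a verification in every instance that the exhibited subgraph is genuinely induced and confined to $S\cup\{x\}$. Once all sizes $2\le|N(x)|\le 6$ are excluded, $G\cong L_5$. It then remains to prove that $L_5$ is itself critically non-line. Its edges form the five $\mathrm{Aut}(L_5)$-orbits $\{ab\}$, $\{bc,bd\}$, $\{cd\}$, $\{ce,cf,de,df\}$, $\{ef\}$, and contracting a representative of each gives a five-vertex graph ($K_4$ with a vertex joined to one of its edges, $K_4$ with a pendant, a triangle with a pendant path, or a diamond, i.e.\ $K_4$ minus an edge, with a pendant), each of which is a line graph by the clique-partition criterion of Theorem~\ref{lineCharcaterization by harary}; hence every $L_5$-contraction is a line graph, completing the proof.
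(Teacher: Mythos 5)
Your framework is the same as the paper's: independence of $V(G)\setminus S$ from Theorem~\ref{Theorem: critical H-exist: not S is independent}, the degree filters of Corollary~\ref{coro: no vertex adajcet to 1 or 2 or 3 or max degree in critical graph}, the two implications extracted from Proposition~\ref{pro: claw adn a vertex outside with adajcent vertices inside}, corner domination, exhibiting an induced $L_2$ or $L_8$ (possibly after one contraction), and a final check that $L_5$ itself is critically non-line (your orbit-by-orbit contraction plus Krausz-partition check is correct, and even a bit cleaner than the paper's invariant argument). Every case you actually work out is right. The problem is that the case analysis is not finished, and the unfinished part is not where you locate it. Running all candidate neighborhoods through your stated tools: all fifteen sets of size $4$ are in fact disposed of by what you wrote (corner domination, the ``three vertices of the $K_4$'' observation giving an induced $L_8$ with the adjacent pair $a,b$ outside it, and your explicit contractions together with $\mathrm{Aut}(L_5)$-symmetry). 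What survives untreated are the size-$3$ sets $N(x)=\{a,b,e\}$ (and its mirror $\{a,b,f\}$) and $N(x)=\{a,e,f\}$, and the size-$5$ set $N(x)=S\setminus\{b\}=\{a,c,d,e,f\}$: none of these is hit by the Corollary, by your two Proposition implications (which require $x\not\sim a$, respectively $x\not\sim b$), by corner domination of $x$, or by the three-of-the-$K_4$ argument, and none appears among your worked examples. As written, the proof has a genuine hole at exactly these neighborhoods, and the claim that only $|N(x)|=4$ remains ``the main obstacle'' is the reverse of the true situation.

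The hole is fillable with your own toolkit, which is why this is incompleteness rather than a wrong approach: in all four leftover cases contract $ab$ and let $y$ be the merged vertex, so $N(y)=\{c,d,x\}$; then $\{y,c,d,e,x\}$ induces $L_2$ when $N(x)$ is $\{a,b,e\}$ or $\{a,e,f\}$, and induces $L_8$ when $N(x)=S\setminus\{b\}$ (there $x$ is additionally adjacent to $c$ and $d$), contradicting that $G/ab$ is a line graph. For comparison, the paper kills these same cases without contracting: for $\{a,e\}$-type and $\{a,e,f\}$-type neighborhoods it exhibits an induced $L_3$ and invokes Lemma~\ref{lem: L3 is not critical line-exist}, and for $\{a,b,e\}$ and $S\setminus\{b\}$ it exhibits an induced $L_2$ together with a vertex outside it that is a corner dominated by a vertex of it. You should also state explicitly the fact you use silently throughout, namely that edges in the same $\mathrm{Aut}(L_5)$-orbit (and neighborhoods related by an automorphism of $G[S]$) may be treated by a single representative.
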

\begin{proof}
	For the sake of contradiction, we assume that there is a critically non-line graph $G$ that is non-isomorphic to $L_{5}$ where $G$ has a vertex set $S$ that induces $L_{5}$. 
	
	\begin{minipage}{0.75\textwidth}
		Assume that $S:=\{r,s,t,u,v,w\}$ that induces $L_{5}$ as in the following figure on right and $x \in V(G) \setminus S$. By Theorem \ref{Theorem: critical H-exist: not S is independent}, we note that $V(G) \setminus S$ is independent. By Corollary \ref{coro: no vertex adajcet to 1 or 2 or 3 or max degree in critical graph}, the neighborhood of $x$ is not one vertex in $S$. According to the cardinality of the neighborhood of $x$ we partition the proof to cases.	
	\end{minipage}\hfill
	\begin{minipage}{0.2\textwidth}
		
		\centering
		\begin{tikzpicture}[hhh/.style={draw=black,circle,inner sep=1pt,minimum size=0.2cm},scale=0.7]
			\node[hhh]  (r) at (0,3.1) 	{$r$};
			\node[hhh] 	(s) at (0,2.3) 	{$s$};
			\node[hhh] 	(t) at (-1,1.5)	{$t$};
			\node[hhh] 	(u) at (1,1.5) {$u$};
			\node[hhh] 	(v) at (1,0.5) {$v$};
			\node[hhh]  (w) at (-1,0.5) 	{$w$};
			
			\node 	(g) at (0,-0.2) {$G[S]$};
			\draw (r)--(s)--(t)--(u)--(s) (u)--(v)--(w)--(t)--(v) (u)--(w);
		\end{tikzpicture}
	\end{minipage}
	
	Case $1$: The cardinality of the neighborhood of $x$ is $2$. By Corollary \ref{coro: no vertex adajcet to 1 or 2 or 3 or max degree in critical graph}, $x$ is nonadjacent to two adjacent vertices. 
	If $N(w)=\{r,w\}$ (or similarly $N(w)=\{r,v\}$), then $\{x\}\cup S \setminus \{v\}$ induces $L_{3}$ which is a contradiction to Lemma \ref{lem: L3 is not critical line-exist}. 
	Otherwise, $x$ is adjacent to a vertex with two independent neighbors where $x$ is adjacent to neither, which contradicts Proposition \ref{pro: claw adn a vertex outside with adajcent vertices inside}. Consequently, $|N(x)| \not= 2$.
	
	Case $2$: The cardinality of the neighborhood of $x$ is $3$.
	If $N(x)=\{r,s,w\}$ (or similarly $\{r,s,v\}$), then $\{x\}\cup S \setminus \{r,v\}$ induces $L_{2}$ while $v$ is a corner dominated by $t$ which is a contradiction to Theorem \ref{Theorem: critical H-exist: not S is independent}. 
	If $N(x)=\{r,t,u\}$, then $\{s,t,w,x\}$) induces a claw while $u$ is a corner dominated by $t$, which is a contradiction to Theorem \ref{Theorem: critical H-exist: not S is independent}. 
	If $N(x)=\{r,t,v\}$ (similarly $N(x)=\{r,t,w\}$, $N(x)=\{r,u,v\}$, or $N(x)=\{r,u,w\}$), then $\{s,t,w,x\}$ induces a claw while $u$ is a corner dominated by $t$, which is a contradiction to Theorem \ref{Theorem: critical H-exist: not S is independent}. 
	If $N(x)=\{r,v,w\}$, then $\{x\}\cup S \setminus \{w\}$ induces $L_{3}$ which is a contradiction to Lemma \ref{lem: L3 is not critical line-exist}. 
	Finally, if $N(x)=\{s,v,w\}$, then $\{x\}\cup S \setminus \{r,w\}$ induces $L_{2}$ while $w$ is a corner dominated by $u$ which is a contradiction to Theorem \ref{Theorem: critical H-exist: not S is independent}. Consequently, $|N(x)| \not= 3$.
	
	Case $3$: The cardinality of the neighborhood of $x$ is $4$.
	If $N(x)=\{r,s,t,u\}$ (similarly $\{s,t,u,v\}$, $\{s,t,u,w\}$, $\{s,t,v,w\}$, $\{s,u,v,w\}$, or $\{t,u,v,w\}$), then $x$ is a corner dominated by a vertex in its neighborhood, which contradicts Theorem \ref{Theorem: critical H-exist: not S is independent}. 
	If $N(x)=\{r,s,t,w\}$ (similarly $\{r,s,t,v\}$, $\{r,s,u,w\}$, or $\{r,s,u,v\}$), then $\{x\} \cup S \setminus \{u\}$ induces $L_{7}$ while $u$ is a corner dominated by $t$, which is a contradiction to Theorem \ref{Theorem: critical H-exist: not S is independent}. 
	If $N(x)=\{r,t,u,v\}$ (or similarly $N(x)=\{r,t,u,w\}$), then in $G/xv$ the set $\{x\} \cup S \setminus \{w\}$ induces $L_{2}$ which is a contradiction to the assumption that $G$ is a critically non-line graph. if $N(x)=\{r,s,v,w\}$, then in $G/rs$ the set $\{x\} \cup S \setminus \{w\}$ induces $L_{2}$ which is a contradiction to the assumption that $G$ is a critically non-line.
	Finally, if $N(x)=\{r,u,v,w\}$ (or similarly $\{r,t,v,w\}$), then in $G/xv$ the set $\{x\} \cup S \setminus \{w\}$ induces $L_{2}$ which is a contradiction to the assumption that $G$ is a critically non-line. Consequently, $|N(x)| \not= 4$.
	
	Case $4$: The cardinality of the neighborhood of $x$ is at least $5$. 
	If $N(x)=S \setminus \{r\}$, then $S$ induces $L_{5}$ while $x$ is a corner dominated by $t$, which is a contradiction to Theorem \ref{Theorem: critical H-exist: not S is independent}. 
	If $x$ is nonadjacent to $s$, then $\{x\} \cup S \setminus \{v,w\}$ induces $L_{2}$ while $w$ is a corner dominated by $x$, which is a contradiction to Theorem \ref{Theorem: critical H-exist: not S is independent}. 
	If $\{r,s\} \subseteq N(x)$ and $|N(x)| = 5$, then $\{x\} \cup S \setminus \{r,s\}$ induces $L_{8}$ while $r$ is a corner dominated by $x$, which is a contradiction to Theorem \ref{Theorem: critical H-exist: not S is independent}. Otherwise, $\{x\} \cup S \setminus \{r,v\}$ induces $L_{8}$ while $r$ is a corner dominated by $x$, which is a contradiction to Theorem \ref{Theorem: critical H-exist: not S is independent}.
	Consequently, $|N(x)| \not\geq 5$.
	
	Finally, we prove that $L_{5}$ is a critically non-line to complete the proof. Any $L_{5}$-contraction has independence number at most $2$ and a vertex with degree $1$ or an induced subgraph isomorphic to $K_{4}$. Hence, any $L_{5}$-contraction is $\{L_{1},L_{2}\}$-free. Thus, $L_{5}$ is a critically non-line graph. 
\end{proof}

\subsubsection{The critically non-line graphs that are $L_{6}$-exist}
\begin{lemma}\label{lem: L6 critical line-exist}
	The graph $L_{6}$ is the only critically non-line graph that is $L_{6}$-exist.
\end{lemma}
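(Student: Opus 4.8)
The plan is to argue by contradiction, in the same spirit as the proofs of Lemmas \ref{lem: L4 critical line-exist} and \ref{lem: L5 critical line-exist}. I would assume there is a critically non-line graph $G \not\cong L_6$ with a vertex set $S = \{a,b,c,d,e,f\}$ inducing $L_6$, and show that no vertex can lie outside $S$. First I would fix a convenient description of $L_6$: it is two copies of $K_4$ glued along an edge, so $S$ splits into two \emph{hub} vertices $c,d$ of degree $5$ (each adjacent to every other vertex of $S$) and four \emph{rim} vertices $a,b,e,f$ of degree $3$, where $ab$ and $ef$ are the two non-hub edges while each of $a,b$ is non-adjacent to each of $e,f$. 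Since $G\ne L_6$ there is some $x\in V(G)\setminus S$, and by Theorem \ref{Theorem: critical H-exist: not S is independent} the set $V(G)\setminus S$ is independent, so $N(x)\subseteq S$.

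The key step, and the one that keeps the argument short, is the observation that $x$ cannot be adjacent to either hub. Indeed, if $x\sim c$ then, because $c$ is adjacent to all of $S\setminus\{c\}$, we get $N[x]=\{x\}\cup N(x)\subseteq S\cup\{x\}\subseteq N[c]$, so $x$ is a corner dominated by $c\in S$, contradicting the second part of Theorem \ref{Theorem: critical H-exist: not S is independent}; the same holds for $d$. Hence $N(x)\subseteq\{a,b,e,f\}$, and since $G[\{a,b,e,f\}]$ consists only of the two disjoint edges $ab$ and $ef$, Corollary \ref{coro: no vertex adajcet to 1 or 2 or 3 or max degree in critical graph} leaves exactly three shapes for $N(x)$ up to the symmetries of $L_6$: a non-adjacent pair such as $\{a,e\}$, a one-edge triple such as $\{a,b,e\}$, or the whole set $\{a,b,e,f\}$.

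I would then dispatch these three cases directly. For $N(x)=\{a,e\}$ the set $\{x,a,c,d,e\}$ induces $L_2$ (the parts $\{a,e\}$ and $\{x,c,d\}$, with the single internal edge $cd$), and for $N(x)=\{a,b,e,f\}$ the set $\{x,a,b,c,e\}$ likewise induces $L_2$ (parts $\{x,c\}$ and $\{a,b,e\}$, single edge $ab$); in both cases Lemma \ref{lem: L2 critical line-exist} would force $G\cong L_2$, which is impossible since $|V(G)|\ge |S|=6>5$. For $N(x)=\{a,b,e\}$ I would instead contract: in $G/xa$ the merged vertex is adjacent to $\{b,c,d,e\}$, and together with it this set induces $K_5$ minus the edge $be$, i.e. $L_8$, so $G/xa$ is non-line, contradicting that $G$ is critically non-line. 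This exhausts all possibilities, so $V(G)=S$ and $G\cong L_6$.

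It remains to verify the converse, that $L_6$ is genuinely critically non-line. $L_6$ is non-line because it contains $L_6$. For the contractions, up to $\mathrm{Aut}(L_6)$ there are only three edge orbits, namely the hub edge $cd$, a rim edge such as $ab$, and a spoke such as $ac$, so I would check each contraction directly: $L_6/cd$ is the butterfly (two triangles sharing a vertex), while $L_6/ab$ and $L_6/ac$ are each $K_4$ with one extra vertex joined to a single edge; all three admit a clique-edge partition in which every vertex lies in at most two cliques, hence are line graphs by Theorem \ref{lineCharcaterization by harary}. I expect the main obstacle to be organizational rather than conceptual: one must be sure the corner-domination reduction really does eliminate every neighbourhood meeting $\{c,d\}$, since without it the proof degenerates into a large case analysis over neighbourhoods of size up to $6$. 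Once that reduction is secured, the two induced-$L_2$ identifications and the single contracted-$L_8$ identification are routine.
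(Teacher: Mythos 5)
Your proof is correct and follows the same skeleton as the paper's: suppose some $x\in V(G)\setminus S$ exists, use Theorem \ref{Theorem: critical H-exist: not S is independent} to confine $N(x)$ to $S$, eliminate the two hubs by corner domination (the paper gets this from Corollary \ref{coro: no vertex adajcet to 1 or 2 or 3 or max degree in critical graph}), and then exploit the fact that a vertex attached to two nonadjacent rim vertices yields an induced $L_2$ on the set formed by $x$, the two hubs, and those two rims. The difference lies in how that $L_2$ produces the contradiction, and in the decomposition. The paper finishes in a single stroke: for any $x$ adjacent to two independent rim vertices it exhibits the induced $L_2$ and asserts that the leftover rim vertex is a corner dominated by a vertex of that $L_2$, contradicting part 2 of the theorem applied to the new set; you instead split into three neighbourhood shapes, close two of them by citing Lemma \ref{lem: L2 critical line-exist} (forcing $G\cong L_2$, impossible since $|V(G)|\ge 6$), and close the one-edge-triple case by contracting $xa$ to create an induced $L_8$. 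Your mechanism buys some robustness: the paper's corner-domination assertion tacitly needs the rim vertex $w$ to have no neighbours outside $S$ (otherwise $N[w]\subseteq N[v]$ can fail), a point the paper leaves unaddressed, whereas quoting the already-established minimality of $L_2$ sidesteps this entirely. Conversely, your third case is redundant: when $N(x)=\{a,b,e\}$, the set $\{x,a,c,d,e\}$ from your first case still induces $L_2$ (since $b$ does not belong to it), so the contraction-to-$L_8$ argument, though correct, is unnecessary. Finally, your verification that $L_6$ is critically non-line (edge orbits plus Krausz partitions) is more thorough than the paper's invariant count, which as stated does not rule out $L_8$ (it too has order $5$, independence number $2$, and a degree-$4$ vertex).
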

\begin{proof}
	For the sake of contradiction, we assume that there is a critically non-line graph $G$ that is non-isomorphic to $L_{6}$ where $G$ has a vertex set $S:=\{r,s,t,u,v,w\}$ that induces $L_{6}$.

	\begin{minipage}{0.75\textwidth}
		Assume that $G[S]$ is labeled as in the following figure on right. By Theorem \ref{Theorem: critical H-exist: not S is independent}, we note that $V(G) \setminus S$ is independent. By Corollary \ref{coro: no vertex adajcet to 1 or 2 or 3 or max degree in critical graph}, there is no vertex $ x \in V(G) \setminus S$ whose neighborhood is one vertex in $S$, or two adjacent vertices in $S$. Moreover, $x$ is nonadjacent to $t$ or $u$.		
	\end{minipage}\hfill
	\begin{minipage}{0.2\textwidth}
		
		\centering
		\begin{tikzpicture}[hhh/.style={draw=black,circle,inner sep=1pt,minimum size=0.2cm},scale=0.6]
			\node[hhh]  (r) at (1,2.5) 	{$r$};
			\node[hhh] 	(s) at (-1,2.5) {$s$};
			\node[hhh] 	(t) at (-1,1.5)	{$t$};
			\node[hhh] 	(u) at (1,1.5) {$u$};
			\node[hhh] 	(v) at (1,0.5) {$v$};
			\node[hhh]  (w) at (-1,0.5) {$w$};
			
			\node 	(g) at (0,-0.2) {$G[S]$};
			\draw (r)--(s)--(t)--(u)--(r)--(t) (u)--(s) (u)--(v)--(w)--(t)--(v) (u)--(w);
		\end{tikzpicture}
	\end{minipage}
	If $x$ is adjacent to two independent vertices say $r,v$, then $\{r,t,u,v,x\}$ induces a graph isomorphic to $L_{2}$ while $w$ is a corner dominated by $v$, which contradicts Theorem \ref{Theorem: critical H-exist: not S is independent}.
	We complete the proof by showing that $L_{6}$ is a critically non-line.
	Any $L_{6}$-contraction has independence number $2$, order $5$, and a vertex with degree $4$. Hence any $L_{6}$-contraction is $\{L_{1},L_{2}\}$-free. Thus, $L_{6}$ is a critically non-line.
\end{proof}

\subsubsection{The critically non-line graphs that are $L_{7}$-exist}
\begin{lemma}\label{lem: L7 critical line-exist}
	The graph $L_{7}$ is the only critically non-line graph that is $L_{7}$-exist.
\end{lemma}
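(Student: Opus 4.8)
The plan is to imitate the case analyses of Lemmas~\ref{lem: L2 critical line-exist}--\ref{lem: L6 critical line-exist}. Suppose, for contradiction, that $G$ is a critically non-line graph not isomorphic to $L_{7}$ and that $S=\{r,s,t,u,v,w\}\subseteq V(G)$ induces $L_{7}$, labelled as in Figure~\ref{fig: Forbidden graphs in line graphs} so that $t,u$ are the two vertices of degree $4$, $r,w$ the two of degree $3$, and $s,v$ the two of degree $2$. By Theorem~\ref{Theorem: critical H-exist: not S is independent} the set $V(G)\setminus S$ is independent, so every $x\in V(G)\setminus S$ has $N(x)\subseteq S$; and by Corollary~\ref{coro: no vertex adajcet to 1 or 2 or 3 or max degree in critical graph}, $N(x)$ is neither a single vertex, nor two adjacent vertices, nor three vertices inducing $P_{3}$ or $C_{3}$. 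A central simplification is that $L_{7}$ admits the fixed-point-free automorphism $(r\,w)(s\,v)(t\,u)$; since it fixes the induced subgraph, neighborhoods $N(x)$ come in pairs and the number of configurations to examine is roughly halved.

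I would then split according to $|N(x)|\in\{2,3,4,5,6\}$ and reach a contradiction in each case by one of two recurring mechanisms. First, I would try to locate inside $G[S\cup\{x\}]$ a four-set inducing a claw $L_{1}$ and then exhibit either two adjacent vertices or a dominated corner lying outside it, contradicting Theorem~\ref{Theorem: critical H-exist: not S is independent} (invoking Proposition~\ref{pro: claw adn a vertex outside with adajcent vertices inside} when $x$ attaches to the middle of an induced $P_{3}$). Because $\alpha(L_{7})=2$, such claws are easy to produce: any vertex adjacent to three pairwise non-adjacent vertices yields one, and the two remaining high-degree vertices of $S$ are typically adjacent and lie outside it, violating the independence assertion. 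Second, when no such configuration is available, I would contract a carefully chosen edge $e$ and show that $G/e$ contains an induced $L_{2}$, $L_{3}$ or $L_{8}$; since $G$ is critically non-line, $G/e$ must be a line graph, again a contradiction (the $L_{3}$ route appeals to Lemma~\ref{lem: L3 is not critical line-exist}). After the automorphism reduction there are only four inequivalent pairs for $|N(x)|=2$ and a short list for each larger cardinality.

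For the converse I would verify directly that $L_{7}$ is critically non-line. The key point is that $\alpha(L_{7})=2$ and that contraction never increases the independence number, so every $L_{7}$-contraction $H$ has $\alpha(H)\le 2$ and hence contains no induced claw $L_{1}$. Moreover $H$ has exactly five vertices, so it contains none of the six-vertex forbidden subgraphs $L_{3},\dots,L_{7},L_{9}$; the only five-vertex members of the Beineke list are $L_{2}$, of degree sequence $(3,3,3,3,2)$, and $L_{8}=K_{5}-e$, of degree sequence $(4,4,4,3,3)$. Running through the five edge-orbits of $L_{7}$ (contracting one representative of each, the others following by the automorphism) gives contraction degree sequences $(4,3,3,2,2)$, $(3,3,3,2,1)$ and $(4,2,2,2,2)$, none equal to those of $L_{2}$ or $L_{8}$. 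Hence every $L_{7}$-contraction is a line graph, so $L_{7}$ is critically non-line.

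The main obstacle is bookkeeping rather than any single deep step: one must argue that the enumeration of admissible neighborhoods $N(x)$ is exhaustive and that each yields a contradiction, and the cases $|N(x)|\in\{4,5\}$ require choosing the right contracted edge to expose an induced $L_{8}$ or a dominated corner. The automorphism symmetry and the bound $\alpha(L_{7})=2$ keep the analysis finite and tractable, but guaranteeing completeness of the case split is the part demanding the most care.
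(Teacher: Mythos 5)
Your proposal follows essentially the same route as the paper's own proof: the same case split on $|N(x)|$ for a vertex $x$ outside the set $S$ inducing $L_{7}$, with each case closed either by exhibiting a claw together with an adjacent pair or dominated corner outside it (contradicting Theorem~\ref{Theorem: critical H-exist: not S is independent}) or by contracting a chosen edge to expose an induced $L_{2}$, $L_{3}$, or $L_{8}$, and your automorphism reduction via $(r\,w)(s\,v)(t\,u)$ is exactly what the paper does implicitly with its ``or similarly'' clauses. Your verification that $L_{7}$ is itself critically non-line (independence number $2$, order $5$, and the degree sequences $(4,3,3,2,2)$, $(3,3,3,2,1)$, $(4,2,2,2,2)$ of the contractions over the five edge orbits) is correct and in fact slightly more careful than the paper's, since it explicitly rules out $L_{8}$, which the paper leaves implicit.
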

\begin{proof}
	For the sake of contradiction, we assume that there is a critically non-line graph $G$ that is non-isomorphic to $L_{7}$ where $G$ has a vertex set $S$ that induce $L_{7}$.

	\begin{minipage}{0.75\textwidth}
		Assume that $G[S]$ is labeled as in the figure on the right and $x \in V(G) \setminus S$. By Theorem \ref{Theorem: critical H-exist: not S is independent}, we note that $V(G) \setminus S$ is independent. By Corollary \ref{coro: no vertex adajcet to 1 or 2 or 3 or max degree in critical graph}, the neighborhood of $x$ is not one vertex in $S$. According to the cardinality of the neighborhood of $x$ we consider four proof cases.		
	\end{minipage}\hfill
	\begin{minipage}{0.2\textwidth}
		
		\centering
		\begin{tikzpicture}[hhh/.style={draw=black,circle,inner sep=1pt,minimum size=0.2cm},scale=0.7]
			\node[hhh]  (r) at (1,2.5) 	{$r$};
			\node[hhh] 	(s) at (-1,2.5) {$s$};
			\node[hhh] 	(t) at (-1,1.5)	{$t$};
			\node[hhh] 	(u) at (1,1.5) {$u$};
			\node[hhh] 	(v) at (1,0.5) {$v$};
			\node[hhh]  (w) at (-1,0.5) {$w$};
			
			\node 	(g) at (0,-0.2) {$G[S]$};
			\draw (r)--(s)--(t)--(u)--(r)--(t) (u)--(v)--(w) (u)--(w)--(t);
		\end{tikzpicture}
	\end{minipage}
	
	Case $1$: The cardinality of the neighborhood of $x$ is $2$. By Corollary \ref{coro: no vertex adajcet to 1 or 2 or 3 or max degree in critical graph}, $x$ is nonadjacent to two adjacent vertices.
	If $N(w)=\{s,v\}$, then in $G/st$ $\{x\} \cup S \setminus \{r\}$ induces $L_{2}$, which contradicts the assumption that $G$ is a critically non-line graph. 
	Otherwise, $x$ is adjacent to a vertex that is adjacent to two independent vertices while $x$ is nonadjacent to any of them. Hence, the four vertices induces claw while there is two other adjacent vertices outside the claw, which is a contradiction to Theorem \ref{Theorem: critical H-exist: not S is independent}. Thus, $|N(x)| \not= 2$.
	
	Case $2$: The cardinality of the neighborhood of $x$ is $3$. By Corollary \ref{coro: no vertex adajcet to 1 or 2 or 3 or max degree in critical graph}, $x$ is nonadjacent to any three vertices that induce either $P_{3}$ or $C_{3}$. 
	If $N(w)=\{r,s,v\}$ (or similarly $N(w)=\{s,v,w\}$), then in $G/rt$ the set $\{x\} \cup S \setminus \{s\}$ induces $L_{3}$, which contradicts the assumption that $G$ is a critically non-line graph. 
	Otherwise, $x$ is adjacent to a vertex that is adjacent to two independent vertices while $x$ is nonadjacent to any of them. Hence, the four vertices induce claw while there are two other adjacent vertices outside the claw, which is a contradiction to Theorem \ref{Theorem: critical H-exist: not S is independent}. Thus, $|N(x)| \not= 3$.
	
	Case $3$: The cardinality of the neighborhood of $x$ is $4$.
	If $y \in N(x)$, then $(N(x) \setminus \{y\}) \not \subseteq N(y)$, otherwise $x$ is a corner dominated by $y$.
	If $N(x)=\{r,s,v,w\}$, then $G/rs[\{x\} \cup S \setminus \{v\}]$ induces $L_{2}$, which contradicts the assumption that $G$ is a critically non-line. 
	Furthermore, if $N(x)=\{r,s,t,v\}$ (similarly $N(x)=\{s,u,v,w\}$), however, $G/rt[\{x\} \cup S \setminus \{s\}]$ induces $L_{2}$, which contradicts the assumption that $G$ is a critically non-line graph.
	Moreover, if $N(x)=\{r,s,u,v\}$ (similarly $N(x)=\{s,t,v,w\}$), then in $G/tw$ the set $\{,u,v,w,x\}$ induces $L_{2}$, which contradicts the assumption that $G$ is a critically non-line graph.
	Otherwise, $x$ has a neighbor $y$ that is adjacent to two independent vertices while $x$ is nonadjacent to any of them. Hence, the four vertices induces claw while there are two other adjacent vertices outside the claw, which is a contradiction to Theorem \ref{Theorem: critical H-exist: not S is independent}. Thus, $|N(x)| \not= 4$.
	
	Case $4$: The cardinality of the neighborhood of $x$ is at least $5$.
	If $x$ is nonadjacent to $s$ (or $v$), then $x$ is a corner dominated by $t$ (or $u$), which is a contradiction to Theorem \ref{Theorem: critical H-exist: not S is independent}. 
	Furthermore, if $x$ is nonadjacent to $r$ (or similarly $x$ is nonadjacent to $w$), then $\{x\} \cup S \setminus \{r,v\}$ induces $L_{8}$ in $G/ru$, which contradicts the assumption that $G$ is a critically non-line graph. 
	Otherwise, $x$ is nonadjacent to $t$ (or $u$) or adjacent to all vertices in $S$, then $\{x\} \cup S \setminus \{v\}$ induces $L_{8}$ in $G/st$, yielding the same contradiction. Thus, $|N(x)| \not\geq 5$.
	
	Finally, to complete the proof, we prove that $L_{7}$ is a critically non-line. Any $L_{7}$-contraction has independence number $2$, order $5$, and a vertex with degree either $4$ or $1$. Hence any $L_{7}$-contraction is $\{L_{1},L_{2}\}$-free. Thus, $L_{7}$ is a critically non-line graph.
\end{proof}

\subsubsection{The critically non-line graphs that are $L_{8}$-exist}
\begin{lemma}\label{lem: L8 critical line-exist}
	The graph $L_{8}$ is the only critically non-line graph that is $L_{8}$-exist.
\end{lemma}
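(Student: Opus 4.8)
The plan is to follow exactly the template established in the preceding lemmas (for $L_4$, $L_5$, $L_7$): assume for contradiction that there is a critically non-line graph $G$ non-isomorphic to $L_8$ that contains a vertex set $S:=\{a,b,c,d,e\}$ inducing $L_8$, fix a labeling of $G[S]$, take an arbitrary $x \in V(G)\setminus S$, and argue that no neighborhood $N(x)\subseteq S$ is possible, so that $V(G)=S$ and hence $G\cong L_8$. First I would record the standing structural facts: by Theorem \ref{Theorem: critical H-exist: not S is independent} the set $V(G)\setminus S$ is independent, so $N(x)\subseteq S$; by Corollary \ref{coro: no vertex adajcet to 1 or 2 or 3 or max degree in critical graph}, $|N(x)|\neq 1$, $N(x)$ is not a pair of adjacent vertices, $N(x)$ is not a triple inducing $P_3$ or $C_3$, and $x$ is not adjacent to any vertex of degree $|V(G)|-1$. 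I would also note, as in the other lemmas, that $x$ cannot be a corner dominated by any vertex of $S$ in $G[S\cup\{x\}]$.

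Then I would split into cases by $|N(x)|\in\{2,3,4\}$ (and $\geq 5$ if the degrees in $L_8$ permit it), exactly mirroring the earlier proofs. The key recurring tools are: (i) Proposition \ref{pro: claw adn a vertex outside with adajcent vertices inside}, which kills most partial neighborhoods by forcing a forbidden claw-plus-edge configuration whenever $x$ attaches to a vertex having two independent neighbors without attaching to one of them; (ii) the corner-domination clause of Theorem \ref{Theorem: critical H-exist: not S is independent}, used to eliminate neighborhoods where $x$ becomes corner-dominated by a vertex of $S$; and (iii) the contraction trick, where for a carefully chosen edge I exhibit a contraction $G/e$ in which $\{x\}\cup(S\setminus\{\cdot\})$ induces one of $L_1,\dots,L_9$ (most often $L_2$ or $L_3$, or $L_8$ itself), contradicting that $G$ is critically non-line. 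For each candidate $N(x)$ I would pick whichever of these three levers applies, grouping symmetric cases via the automorphisms of $L_8$ to keep the casework short with ``or similarly'' phrasing.

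Finally, I would close by verifying the forward direction: that $L_8$ is itself critically non-line, i.e.\ that every $L_8$-contraction is $\{L_1,\dots,L_9\}$-free. As in the sibling lemmas this is done by a cheap invariant check — I would compute that each $L_8$-contraction has small order (four or five vertices) and independence number at most $2$, which already forces claw-freeness and $L_2$-freeness, and then rule out any remaining $L_i$ by order or degree-sequence considerations. Because $L_8$ is vertex-transitive-ish with a compact structure, there are only a few contraction types up to isomorphism, so this step is routine.

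The main obstacle I anticipate is the bookkeeping in the large-neighborhood cases. Because $L_8$ is fairly dense (it is $K_5$ minus a small set of edges, with several high-degree vertices), a vertex $x$ with $|N(x)|\geq 4$ has many possible attachment patterns, and unlike the sparser $L_i$ one cannot dispose of them all by a single corner-domination observation; I expect to need several distinct contractions, each producing a specific small forbidden subgraph, and the delicate part will be confirming in each contraction that the claimed induced subgraph really is isomorphic to the intended $L_i$ rather than to something line-graph-admissible. Getting the symmetry reductions right — so that genuinely distinct patterns are not accidentally merged — is where I would be most careful.
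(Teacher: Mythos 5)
Your proposal reproduces the template of the surrounding lemmas correctly, and your closing verification that $L_{8}$ is itself critically non-line is sound (every $L_{8}$-contraction has order $4$ and independence number $2$, hence is claw-free, and all other forbidden graphs have at least five vertices). But the heart of the lemma --- showing that no vertex can exist outside a set $S$ inducing $L_{8}$ --- is left as a schema: ``for each candidate $N(x)$ I would pick whichever of these three levers applies.'' No candidate neighborhood is actually eliminated, so as written this is a plan for a proof, not a proof. Worse, the plan points in the wrong direction: you anticipate heavy bookkeeping in the cases $|N(x)|\geq 4$, needing ``several distinct contractions,'' and you assert that for a dense graph like $L_{8}$ one cannot dispose of the attachment patterns by a single corner-domination observation. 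The opposite is true, and that single observation is exactly what the paper uses: writing $S=\{r,s,t,u,v\}$ with $u,v$ the nonadjacent pair, each of $r,s,t$ has degree $|S|-1$ in $G[S]$. Since $V(G)\setminus S$ is independent (Theorem \ref{Theorem: critical H-exist: not S is independent}), any external vertex $x$ has $N(x)\subseteq S$, so if $x$ were adjacent to one of $r,s,t$ it would be a corner dominated by that vertex, contradicting Theorem \ref{Theorem: critical H-exist: not S is independent} (equivalently, the last clause of Corollary \ref{coro: no vertex adajcet to 1 or 2 or 3 or max degree in critical graph}). This kills every neighborhood meeting $\{r,s,t\}$ at once.

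What remains after that observation is a single case, $N(x)=\{u,v\}$ (the case $|N(x)|=1$ being excluded by Corollary \ref{coro: no vertex adajcet to 1 or 2 or 3 or max degree in critical graph}), and it needs one contraction: in $G/rt$ the five vertices consisting of $s$, $u$, $v$, $x$, and the contracted vertex induce $L_{2}$, contradicting criticality. Your proposal never identifies this case as the only surviving one, and never exhibits the contraction that eliminates it; these two steps are the entire content of the lemma, so their absence is a genuine gap rather than a stylistic omission. Had you carried out your own plan literally --- enumerating all $N(x)$ with $|N(x)|\in\{2,3,4,5\}$ over a five-vertex dense graph --- you would eventually succeed, since the levers you list are sufficient, but the proof you submitted does not contain those eliminations, and your stated expectations about where the difficulty lies suggest the dominating-vertex shortcut would have been missed.
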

\begin{proof}
	For the sake of contradiction, we assume that there is a critically non-line graph $G$ that is non-isomorphic to $L_{8}$ where $G$ has a vertex set $S$ that induce $L_{8}$.

	\begin{minipage}{0.75\textwidth}
		Assume that $G[S]$ is labeled as in the figure at the right. By Theorem \ref{Theorem: critical H-exist: not S is independent}, we note that $V(G) \setminus S$ is independent. By Corollary \ref{coro: no vertex adajcet to 1 or 2 or 3 or max degree in critical graph}, we conclude that there is no vertex in $V(G) \setminus S$ whose neighborhood is one vertex in $S$. Furthermore, no vertex in $V(G) \setminus S$ exists that is adjacent to any vertex in $\{r,s,t\}$.
	\end{minipage}\hfill
	\begin{minipage}{0.2\textwidth}
		
		\centering
		\begin{tikzpicture}[hhh/.style={draw=black,circle,inner sep=1pt,minimum size=0.2cm},scale=0.7]
			\node[hhh]  (r) at (0,2) 	{$r$};
			\node[hhh] 	(s) at (-1,0.5) {$s$};
			\node[hhh] 	(t) at (1,0.5) 	{$t$};
			\node[hhh] 	(u) at (0,1.1) 	{$u$};
			\node[hhh] 	(v) at (0,3) 	{$v$};
			
			\node 	(g) at (0,-0.2) {$G[S]$};
			
			\draw (r)--(s)--(t)--(r)--(u)--(s) (u)--(t)--(v)--(r)--(v)--(s);
		\end{tikzpicture}
	\end{minipage}
	
	Furthermore, there is no vertex $w$ in $V(G) \setminus S$ that is adjacent to both $u,v$, otherwise, in $G/rt$ the set $\{s,t,u,v,w\}$ induces a graph isomorphic to $L_{2}$, which contradicts that $G$ is a critically non-line graph. Finally to complete our proof, we prove that $L_{8}$ is a critically non-line. Contracting any edge in $L_{8}$ generates a graph with independence number $2$ and order $4$, which is claw-free graph. Thus, $L_{8}$ is a critically non-line.
\end{proof}

\subsubsection{The critically non-line graphs that are $L_{9}$-exist}
\begin{lemma}\label{lem: L9 critical line-exist}
	The graph $L_{9}$ is the only critically non-line graph that is $L_{9}$-exist.
\end{lemma}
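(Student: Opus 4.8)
The plan is to argue by contradiction: assume there is a critically non-line graph $G \not\cong L_9$ that contains an induced copy of $L_9$ on a vertex set $S$. Writing $L_9$ as the wheel $K_1 + C_5$, I fix $S = \{a,b,c,d,e,f\}$ with $a$ the hub (adjacent to all five others) and $b,c,d,e,f$ the rim, taken in cyclic order $b,c,d,e,f$. Since $G \neq L_9$ there is a vertex $x \in V(G)\setminus S$, so $|V(G)| \geq 7$. As in the earlier lemmas, Theorem \ref{Theorem: critical H-exist: not S is independent} tells us that $V(G)\setminus S$ is independent, hence $N(x) \subseteq S$, and Corollary \ref{coro: no vertex adajcet to 1 or 2 or 3 or max degree in critical graph} forbids $|N(x)|=1$, a pair of adjacent neighbours, and a triple inducing $P_3$ or $C_3$. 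The objective is to eliminate every admissible neighbourhood, forcing $V(G)=S$.

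The step that organizes everything is a corner-domination observation that disposes of every neighbourhood meeting the hub simultaneously. If $x$ is adjacent to $a$, then since $a$ is adjacent to all of $S\setminus\{a\}$ and to $x$, we get $N[x] = \{x\}\cup N(x) \subseteq \{x\}\cup S \subseteq N[a]$, so $x$ is a corner dominated by $a$, contradicting Theorem \ref{Theorem: critical H-exist: not S is independent}(2). Thus no external vertex is adjacent to the hub, and every $N(x)$ lies in the rim $\{b,c,d,e,f\}$. Combined with the corollary, this leaves only $|N(x)|\in\{2,3,4,5\}$, where a size-$2$ set is a non-adjacent (diagonal) pair and a size-$3$ set is a non-consecutive triple inducing $K_2+K_1$.

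The residual rim-cases are settled by exhibiting a small forbidden induced subgraph and invoking the earlier lemmas with $|V(G)|\geq 7$. For a diagonal pair, say $N(x)=\{b,d\}$, the set $\{b,c,f,x\}$ induces a claw, since $b$ is adjacent to the pairwise non-adjacent vertices $c,f,x$; for a non-consecutive triple, say $N(x)=\{b,c,e\}$, the vertex $e$ together with its rim-neighbours $d,f$ and with $x$ induces a claw. In either case $G$ is $L_1$-exist, and as $|V(G)|\geq 7$ exceeds the order of every graph listed in Lemma \ref{lem: L1 critical line-exist}, this is a contradiction. For $|N(x)|=4$ the rim-neighbourhood is a $P_4$, and for $|N(x)|=5$ it is the whole rim; in both cases I can pick three rim-vertices spanning exactly one edge (e.g. $\{b,c,e\}$) that are common neighbours of $a$ and $x$, and then $\{a,x,b,c,e\}$ induces $L_2$, contradicting Lemma \ref{lem: L2 critical line-exist} since $|V(G)|\geq 7$. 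With all cases eliminated, no external vertex exists and $G\cong L_9$.

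To finish I would verify that $L_9$ is itself critically non-line. It is $L_9$-exist by definition, and its automorphism group (dihedral on the rim, fixing the hub) has exactly two edge-orbits, so up to isomorphism there are only two contractions: contracting a rim edge gives the wheel $K_1+C_4$, and contracting a spoke gives the fan $K_1+P_4$, both of which are readily seen to be line graphs via Theorem \ref{lineCharcaterization by harary}. I expect the main obstacle to be purely organizational, namely checking the four- and five-vertex subsets in the rim-cases are genuinely induced claws and copies of $L_2$ and reducing the size-$3$, -$4$, and -$5$ neighbourhoods to single representatives under the dihedral symmetry; the hub-case, which one might anticipate as the hardest because of its large neighbourhoods, is in fact immediate from the corner-domination identity.
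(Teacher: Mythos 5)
Your proof is correct and follows the same skeleton as the paper's (rule out every admissible neighbourhood of a vertex outside $S$, then verify that $L_{9}$ itself is critically non-line), but the tools you use differ at almost every step. Where the paper excludes adjacency to the hub by citing Corollary \ref{coro: no vertex adajcet to 1 or 2 or 3 or max degree in critical graph}, you spell out the corner-domination inclusion $N[x]\subseteq S\cup\{x\}\subseteq N[a]$, which is in fact the cleaner justification, since that corollary literally speaks of a vertex of degree $|V(G)|-1$, which the hub of $G[S]$ need not be in $G$. For the rim cases, the paper gets its contradictions directly from Theorem \ref{Theorem: critical H-exist: not S is independent} (a claw or an $L_{2}$ together with an adjacent pair outside, via Proposition \ref{pro: claw adn a vertex outside with adajcent vertices inside}), and for the all-rim case it contracts an edge and finds $L_{8}$ in the contraction; you instead feed every case into the earlier classifications, Lemmas \ref{lem: L1 critical line-exist} and \ref{lem: L2 critical line-exist}, and close with the order bound $|V(G)|\geq 7$, locating the $L_{2}$ (as $K_{2,3}$ plus an edge on $\{a,x,b,c,e\}$) inside $G$ itself so that no contraction is needed in the elimination phase. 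Both routes are sound; yours replaces the paper's ad hoc configurations by a uniform ``too large to appear on the list'' argument, at the cost of leaning on the full strength of the earlier lemmas. Finally, your verification that $L_{9}$ is critically non-line --- enumerating the two edge-orbits and exhibiting Krausz partitions of $K_{1}+C_{4}$ and $K_{1}+P_{4}$ via Theorem \ref{lineCharcaterization by harary} --- is actually tighter than the paper's invariant argument (independence number $2$, maximum degree $4$), which rules out $L_{1}$ and $L_{2}$ but is silent about $L_{8}=K_{5}-e$, a five-vertex graph with exactly those invariant values; that residual check is needed, and your enumeration supplies it.
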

\begin{proof}
	For the sake of contradiction, we assume that there is a critically non-line graph $G$ that is non-isomorphic to $L_{9}$ where $G$ has a vertex set $S$ that induces $L_{9}$.

	\begin{minipage}{0.75\textwidth}
		Assume that $G[S]$ is labeled as in the figure at the right. By Theorem \ref{Theorem: critical H-exist: not S is independent}, we note that $V(G) \setminus S$ is independent. By Corollary \ref{coro: no vertex adajcet to 1 or 2 or 3 or max degree in critical graph}, there is no vertex $ V(G) \setminus S$ whose neighborhood is one vertex in $S$, two adjacent vertices, or three vertices inducing $P_{3}$. Furthermore, no vertex in $ V(G) \setminus S$ is adjacent to $r$. By Proposition \ref{pro: claw adn a vertex outside with adajcent vertices inside}, the neighborhood of $x$ is neither a pair of nonadjacent vertices nor three vertices.
	\end{minipage}\hfill
	\begin{minipage}{0.2\textwidth}
		
		\centering
		\begin{tikzpicture}[hhh/.style={draw=black,circle,inner sep=1pt,minimum size=0.2cm},scale=0.7]
			\node[hhh]  (r) at (0,1.5) 	{$r$};
			\node[hhh] 	(s) at (0,2.5) 	{$s$};
			\node[hhh] 	(t) at (-1,1.7)	{$t$};
			\node[hhh] 	(u) at (-0.7,0.5) {$u$};
			\node[hhh] 	(v) at (0.7,0.5) {$v$};
			\node[hhh]  (w) at (1,1.7) 	{$w$};
			
			\node 	(g) at (0,-0.2) {$G[S]$};
			
			\draw (s)--(t)--(u)--(v)--(w)--(s)--(r)--(t) (u)--(r)--(v) (r)--(w);
		\end{tikzpicture}
	\end{minipage}
	
	If the neighborhood of $x$ is four vertices in $S \setminus \{r\}$, say $S \setminus \{r,s\}$, then $\{r,s,t,w,x\}$ induces $L_{2}$ while $u,v$ are adjacent, which is a contradiction to Theorem \ref{Theorem: critical H-exist: not S is independent}. If $x$ is adjacent to every vertex in $S \setminus \{r\}$, then In $G/rs$, $\{r,u,v,w,x\}$ induces $L_{8}$ which contradicts the assumption that $G$ is a critically non-line graph. Finally to complete the proof, we prove that $L_{9}$ is a critically non-line graph. Any $L_{9}$-contraction has independence number $2$ and a maximum degree $4$. Thus, any $L_{9}$-contraction is $\{L_{1},L_{2}\}$-free. Thus, $L_{9}$ is a critically non-line graph.
\end{proof}

%
%
By Lemmas \ref{lem: L1 critical line-exist} to \ref{lem: L9 critical line-exist}, we deduce the following.
\begin{theorem}\label{lem: critical Line-exist graphs}
	The graphs in $\{L_{1},\dots,L_{13}\}-\{L_{3}\}$ are the critically non-line graphs.
\end{theorem}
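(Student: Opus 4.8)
The plan is to assemble Lemmas~\ref{lem: L1 critical line-exist}--\ref{lem: L9 critical line-exist} into the stated characterization using the forbidden-subgraph characterization of line graphs stated above. The key observation is that a graph is non-line exactly when it is $\{L_1,\dots,L_9\}$-exist; consequently every critically non-line graph $G$ must contain at least one of $L_1,\dots,L_9$ as an induced subgraph, so $G$ is $L_i$-exist for some $i\in\{1,\dots,9\}$.

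Given this, I would establish the inclusion that every critically non-line graph lies in $\{L_1,\dots,L_{13}\}\setminus\{L_3\}$ by a case split on the index $i$ above. For each fixed $i$, the corresponding lemma determines exactly which critically non-line graphs are $L_i$-exist: Lemma~\ref{lem: L1 critical line-exist} yields $L_1,L_{10},L_{11},L_{12},L_{13}$; Lemma~\ref{lem: L2 critical line-exist} yields $L_2$; Lemma~\ref{lem: L3 is not critical line-exist} yields nothing; and Lemmas~\ref{lem: L4 critical line-exist}--\ref{lem: L9 critical line-exist} yield $L_4,L_5,L_6,L_7,L_8,L_9$ respectively. Since $G$ is $L_i$-exist for at least one $i$, it is named in at least one of these lists, hence $G$ belongs to their union $\{L_1,L_2,L_4,L_5,L_6,L_7,L_8,L_9,L_{10},L_{11},L_{12},L_{13}\}=\{L_1,\dots,L_{13}\}\setminus\{L_3\}$.

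For the reverse inclusion I would simply note that each of these lemmas, in the course of its proof, already verifies that the graphs it names are critically non-line; taking the union over all nine lemmas shows that every graph in the claimed set is critically non-line. The two inclusions together give the set identity.

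The point deserving the most care is the exclusion of $L_3$. Since $L_3$ is trivially $L_3$-exist, were it critically non-line it would be counted by Lemma~\ref{lem: L3 is not critical line-exist}; but that lemma asserts no such graph exists, so $L_3$ is not critically non-line (indeed, contracting the edge joining its two degree-two vertices yields $L_2$, which is non-line). This is precisely why $\{L_1,\dots,L_{13}\}\setminus\{L_3\}$, rather than $\{L_1,\dots,L_{13}\}$, is the correct family. Beyond this bookkeeping no genuine obstacle remains: because the nine lemmas are each complete characterizations within their own scope, the union argument is self-consistent and no graph is forced into two incompatible lists.
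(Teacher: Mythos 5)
Your proposal is correct and follows exactly the paper's route: the paper proves this theorem simply by citing Lemmas~\ref{lem: L1 critical line-exist} through \ref{lem: L9 critical line-exist}, and your write-up just makes explicit the union-of-cases argument (via Beineke's characterization, every critically non-line graph is $L_{i}$-exist for some $i\in\{1,\dots,9\}$) together with the reverse inclusion already verified inside each lemma's proof. Your handling of the exclusion of $L_{3}$, including the contraction of the edge between its two degree-two vertices producing $L_{2}$, matches the paper's Lemma~\ref{lem: L3 is not critical line-exist}.
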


\subsubsection{Line-Split Graphs}
\begin{theorem}\label{lem: line-split-graphs are line graphs}
	The graphs $L_{14},\dots,L_{34}$ are line graphs.
\end{theorem}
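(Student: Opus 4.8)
The plan is to check each of the twenty-one graphs directly against the Krausz--Harary characterization (Theorem~\ref{lineCharcaterization by harary}): a graph is a line graph exactly when its edges can be partitioned into cliques so that every vertex lies in at most two of the blocks. Proving membership is thus purely constructive — for each $L_i$ with $14 \le i \le 34$ I would exhibit one explicit clique edge-partition and verify that no vertex is forced into a third clique. Equivalently, and often more transparent to read, I would give for each $L_i$ a root graph $R_i$ with $L(R_i) \cong L_i$: the vertices of $R_i$ are the cliques of the Krausz partition and the edges of $R_i$ are the vertices of $L_i$, two cliques meeting in a vertex precisely when the corresponding edges of $R_i$ share an endpoint. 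Producing the clique cover and producing the root graph are the same data, so I would use whichever is cleaner for the graph at hand.

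To keep the bookkeeping manageable I would not treat $L_{14},\dots,L_{34}$ as an unstructured list, but exploit how they were generated. By Theorem~\ref{Theorem: CC=H[]} the line-split graphs are exactly $splitting(\{L_1,\dots,L_9\})$, so each $L_i$ here arises by un-contracting a single edge of some forbidden graph $L_j$ with $1 \le j \le 9$, that is, by splitting one vertex $v$ into an adjacent pair $u,w$ and distributing $N(v)$ between them. I would group the graphs by the $L_j$ they come from and read off the Krausz partition essentially by inspection: a clique cover of the part of $L_j$ not touching $v$ carries over verbatim, and the split only reorganizes the cliques through $v$ into cliques through $u$ and $w$. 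For the small and highly symmetric cases the two-clique-per-vertex condition is immediate, and the remaining cases reduce to covering a handful of triangles together with a few pendant or bridging edges.

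The main obstacle I anticipate is not any individual graph but the volume of essentially routine verifications, combined with the single delicate point the characterization isolates. Covering the edges by cliques is always easy; the entire content of ``is a line graph'' sits in the requirement that \emph{no} vertex lie in three cliques. So for each candidate partition the real work is confirming, vertex by vertex, that the edges at every vertex split into at most two cliques — equivalently, that no vertex is the centre of an induced claw. This is why the claw-free screening carried out earlier in the section is the right prerequisite: it already eliminates the most common obstruction, leaving only the finitely many borderline configurations to settle by hand. I would therefore organize the proof as a compact table with one row per graph, listing either its clique cover or its root graph $R_i$, so that the ``at most two'' condition can be confirmed at a glance.
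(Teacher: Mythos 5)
Your approach coincides with the paper's: the proof there consists exactly of exhibiting (implicitly) a partition of the edge set of each $L_{14},\dots,L_{34}$ into cliques with every vertex in at most two of them, and invoking Theorem~\ref{lineCharcaterization by harary}. One small caution that does not affect your strategy: your parenthetical claim that ``every vertex's incident edges split into at most two cliques'' is \emph{equivalent} to ``no vertex is the centre of an induced claw'' is false in general (a vertex whose neighborhood induces $C_{5}$ is not a claw centre yet requires three cliques), so the per-vertex clique check must still be done directly rather than replaced by claw-freeness.
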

\begin{proof}
	We note that the edge sets of the graphs $L_{14},\dots,L_{34}$ can be partitioned cliques such that every vertex is contained in at most two of such cliques. Thus, by Theorem \ref{lineCharcaterization by harary} the result follows.
\end{proof}

\begin{theorem}\label{lem: unique line-split-graphs}
	The set $\elm(\{L_{14},\dots,L_{34}\})$ is equivalent to the set of graphs $\{L_{14},\dots,\\L_{21}\}$.
\end{theorem}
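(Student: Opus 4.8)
The plan is to translate the statement into the induced-subgraph order and then verify it graph-by-graph. Unwinding the definition of $\elm$, a graph $H$ belongs to $\elm(\{L_{14},\dots,L_{34}\})$ exactly when the only member of the family occurring as an induced subgraph of $H$ is $H$ itself; that is, $H$ is \emph{minimal} for the induced-subgraph relation, containing no other $L_j$ as a (necessarily proper) induced subgraph. Hence the asserted equality $\elm(\{L_{14},\dots,L_{34}\})=\{L_{14},\dots,L_{21}\}$ breaks into two claims over the explicitly drawn graphs: (i) each of $L_{14},\dots,L_{21}$ is minimal, and (ii) each of $L_{22},\dots,L_{34}$ is non-minimal. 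Establishing (i) and (ii) suffices, and by Proposition~\ref{prop: elem} it then yields the practical reduction of the forbidden family from twenty-one graphs to eight.

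For (i), I would first cut down the search with cheap invariants. A proper induced subgraph has strictly fewer vertices, so for a fixed $L_i$ only the family members of smaller order are candidates; I would therefore record once the order, the degree sequence, the independence number, and the presence or absence of a $K_{4}$ or of a claw for every $L_{14},\dots,L_{34}$. For each claimed-minimal $L_i$, ruling out the finitely many smaller candidates then reduces in almost every instance to an invariant mismatch, with the few remaining pairs dispatched by observing directly that no vertex subset of $L_i$ induces the candidate. The genuinely delicate sub-point is confirming that none of the eight graphs $L_{14},\dots,L_{21}$ contains another of the eight; I would settle this by noting that they differ pairwise in order or in one of the tabulated invariants.

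For (ii), I would exhibit an explicit witness for every $L_k$ with $k\ge 22$: a vertex whose deletion leaves an induced subgraph isomorphic to one of $L_{14},\dots,L_{21}$. Since each $L_k$ arises as a splitting of some member of the Beineke list, I expect a \emph{redundant} split vertex to be present whose removal collapses $L_k$ onto a smaller graph already on the minimal list; naming that vertex and the resulting isomorphism type is all that is needed, and because the deletion strictly decreases the order the witnessed copy is automatically proper, so $L_k\notin\elm(\{L_{14},\dots,L_{34}\})$.

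The main obstacle is organizational rather than conceptual: this is a finite but sizeable case check across twenty-one labelled graphs, where the real risk is overlooking a containment or an isomorphism. I would mitigate this by processing the graphs in nondecreasing order of size and by reusing the single invariant table throughout, so that every containment question is first screened by an invariant and only the surviving cases are checked by hand. No individual step is deep; the difficulty lies entirely in carrying out the bookkeeping without error.
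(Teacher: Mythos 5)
Your unwinding of $\elm$ and your two-part decomposition --- (i) the eight graphs $L_{14},\dots,L_{21}$ are minimal, (ii) each of $L_{22},\dots,L_{34}$ properly contains a member of the family --- is exactly the paper's route. The genuine problem is the mechanism you commit to for part (ii): you claim every $L_k$ with $k\ge 22$ has a single vertex whose deletion leaves a graph isomorphic to one of $L_{14},\dots,L_{21}$, and that ``naming that vertex and the resulting isomorphism type is all that is needed.'' That is false for most of the thirteen graphs. Concretely, take $L_{22}$ (seven vertices, the first splitting of $L_{3}$): of its seven one-vertex-deleted subgraphs, one is $C_{6}$, four have a vertex of degree $1$, and two have degree sequence $(3,3,2,2,2,2)$; none of these is isomorphic to $L_{15}$, $L_{16}$, $L_{20}$, or $L_{21}$, and the seven-vertex members $L_{17},L_{18},L_{19}$ cannot occur as \emph{proper} induced subgraphs of a seven-vertex graph. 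What $L_{22}$ actually contains from the minimal list is the five-vertex $L_{14}$ (a triangle with pendant edges at two of its vertices), reached only after deleting \emph{two} vertices, and the same holds for $L_{23},L_{24},L_{25},L_{26}$ among others. This is precisely why the paper records its witnesses as ``$L_{22},\dots,L_{32}$ are $L_{14}$-exist,'' with only $L_{33}$ (which contains $L_{20}$) and $L_{34}$ (which contains $L_{15}$) fitting your one-vertex-deletion scheme.

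The underlying misconception is that a ``redundant split vertex'' can be deleted to undo a splitting. Undoing a splitting is an edge \emph{contraction}, not a vertex deletion, and contracting the split pair of $L_k$ returns one of the Beineke graphs $L_{3},\dots,L_{9}$, which is neither a line graph nor a member of $\{L_{14},\dots,L_{34}\}$; so there is no reason any one-vertex-deleted subgraph should lie in the family, and for $L_{22}$ none does. The repair is easy and restores the paper's argument: let the witness be an arbitrary proper induced subgraph, i.e., search over deletions of two vertices as well. A smaller caution on part (i): that two graphs ``differ pairwise in order or in one of the tabulated invariants'' is not an obstruction to induced containment (a five-vertex graph always differs in order from a seven-vertex one, yet may embed in it). Only invariants monotone under induced subgraphs, applied in the correct direction, rule out containments --- for instance $\alpha(L_{14})=3$ while $\alpha(L_{i})=2$ for $i=15,\dots,21$, which legitimately shows no other member of the eight contains $L_{14}$; a mere mismatch of orders or degree sequences shows nothing.
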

\begin{proof}
	The graphs $L_{22},\dots,L_{32}$ are $L_{14}$-exist. The graph $L_{33}$ is $L_{20}$-exist. The graph $L_{34}$ is $L_{15}$-exist. Moreover, it is not hard to see that the graphs $\{L_{14},\dots,L_{21}\}$ are minimal.
\end{proof}

\begin{figure}[ht!]
	\centering
	\begin{tikzpicture}[hhh/.style={draw=black,circle,inner sep=1pt,minimum size=0.15cm}]
		
		\begin{scope}[shift={(0,0)}]
			\node 	(graph) at (0:0cm) 	{$L_{14}$};
			\node[hhh]  (a) at (90:1.7cm) 	{};
			\node[hhh] 	(b) at (-0.5,1) 	{};
			\node[hhh] 	(c) at (0.5,1) 	{};
			\node[hhh] 	(d) at (-0.5,0.5) 	{};
			\node[hhh] 	(e) at (0.5,0.5) 	{};
			
			\draw (b)--(a) -- (c) --(b) --  (d)  (e) --(c);
		\end{scope}

		\begin{scope}[shift={(3,0)}]
			\node 	(graph) at (0:0cm) 	{$L_{15}$};
			\node[hhh]  (a) at (90:1.5cm) 	{};
			\node[hhh] 	(b) at (90:2.5cm) 	{};
			\node[hhh] 	(c) at (1,1.5) 	{};
			\node[hhh] 	(d) at (-1,1.5) 	{};
			\node[hhh] 	(e) at (0.5,0.5) 	{};
			\node[hhh] 	(f) at (-0.5,0.5) 	{};
			
			\draw (d)--(a) -- (c) --(b) --  (d) -- (f) -- (e) --(c) (f)--(a)--(e) ;
		\end{scope}
		\begin{scope}[shift={(6,0)}]
			\node 	(graph) at (0:0cm) 	{$L_{16}$};
			\node[hhh]  (a) at (-1,1.5) 	{};
			\node[hhh] 	(b) at (-1,0.5) 	{};
			\node[hhh] 	(c) at (-0.3,1) 	{};
			\node[hhh] 	(d) at (1,1.5) 	{};
			\node[hhh] 	(e) at (1,0.5) 	{};
			\node[hhh] 	(f) at (0.3,1) 	{};
			
			\draw (a) -- (c) --(b) --(a) (d) -- (f) -- (e) --(d) (a)--(d) (b)--(e) (c)--(f) ;
		\end{scope}	
		
		\begin{scope}[shift={(9,0)}]
			\node 	(graph) at (0:0cm) 	{$L_{17}$};
			\node[hhh]  (a) at (-0.5,0.5) 	{};
			\node[hhh] 	(b) at (0.5,0.5) 	{};
			\node[hhh] 	(c) at (0.5,1.5) 	{};
			\node[hhh] 	(d) at (-0.5,1.5) 	{};
			\node[hhh] 	(e) at (-0.5,2.5) 	{};
			\node[hhh] 	(f) at (0.5,2.5) 	{};
			\node[hhh] 	(g) at (0,3) 	{};
			
			\draw (a)--(b)--(c)--(d)--(a)--(c)  (b)--(d)--(e) -- (f) --(c) (f)--(g)--(e) ;
		\end{scope}	
		
		\begin{scope}[shift={(0,-3)}]
			\node 	(graph) at (0:0cm) 	{$L_{18}$};
			\node[hhh]  (a) at (0.5,2) 	{};
			\node[hhh] 	(b) at (-0.5,2) {};
			\node[hhh] 	(c) at (-0.5,1.2)	{};
			\node[hhh] 	(d) at (0.5,1.2) {};
			\node[hhh] 	(e) at (0.5,0.5) {};
			\node[hhh]  (f) at (-0.5,0.5) {};
			\node[hhh]  (g) at (0,1.6) {};

			\draw (a)--(b)--(c)--(d)--(a)--(g)--(c) (d)--(g)--(b) (d)--(e)--(f)--(c)--(e) (d)--(f);
		\end{scope}
		\begin{scope}[shift={(3,-3)}]
			\node 	(graph) at (0:0cm) 	{$L_{19}$};
			\node[hhh]  (a) at (0,0.9) 	{};
			\node[hhh] 	(b) at (-0.5,0.5) {};
			\node[hhh] 	(c) at (0.5,0.5)	{};
			\node[hhh] 	(d) at (-0.5,1.5) {};
			\node[hhh] 	(e) at (0.5,1.5) {};
			\node[hhh]  (f) at (1.1,1.2) {};
			\node[hhh]  (g) at (-1.1,1.2) {};

			\draw (b)--(a)--(d)--(g)--(b)--(d) (a)--(g) (c)--(e)--(f)--(a)--(c)--(f) (f)--(a)--(e)--(d);
		\end{scope}
		\begin{scope}[shift={(6,-3)}]
			\node 	(graph) at (0:0cm) 	{$L_{20}$};
			\node[hhh]  (a) at (-0.5,0.5) 	{};
			\node[hhh] 	(b) at (-0.5,2) 	{};
			\node[hhh] 	(c) at (0.5,2) 	{};
			\node[hhh] 	(d) at (0.5,0.5) 	{};
			\node[hhh] 	(e) at (0,0.8) 	{};
			\node[hhh] 	(f) at (0,1.5) 	{};
			
			\draw (d)--(a) -- (b) --(c) -- (d) (a)--  (f) -- (b)  (c) --(f) --(d)--(a) (d)--(e) --(a) (f)--(e);
		\end{scope}
		\begin{scope}[shift={(9,-3)}]
			\node 	(graph) at (0:0cm) 	{$L_{21}$};
			\node[hhh]  (a) at (-0.5,0.5) 	{};
			\node[hhh] 	(b) at (-0.5,2) 	{};
			\node[hhh] 	(c) at (0.5,2) 	{};
			\node[hhh] 	(d) at (0.5,0.5) 	{};
			\node[hhh] 	(e) at (-1,1.25) 	{};
			\node[hhh] 	(f) at (1,1.25) 	{};
			
			\draw (d)--(a) -- (b) --(c) -- (d) (a)--  (e) -- (b)  (c) --(e) --(d) (d)--(f) --(a) (c)--(f)--(b);
		\end{scope}	
	\end{tikzpicture}	
	\caption{The $\elm$(line-split) graphs}
	\label{fig: elm(Line-split graphs)}
\end{figure}
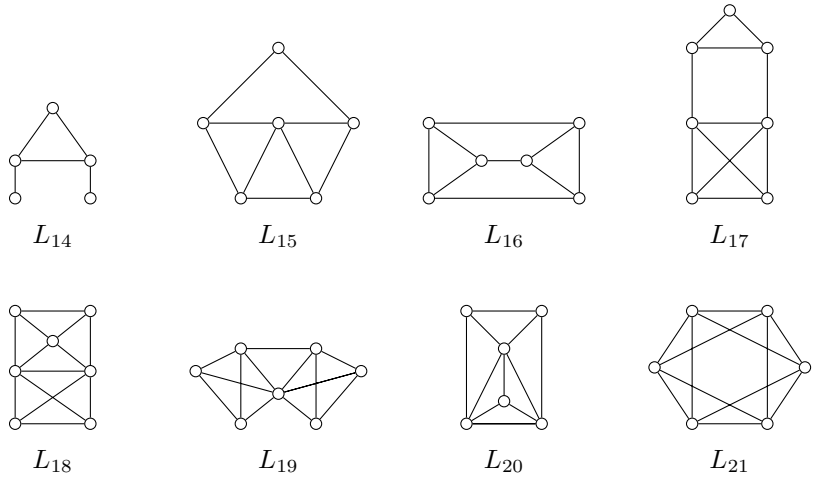
\begin{figure}[ht!]
	\centering
	\begin{tikzpicture}[hhh/.style={draw=black,circle,inner sep=1pt,minimum size=0.15cm}]
		\begin{scope}[shift={(0,0)}]
			\node 	(graph) at (0:0cm) 	{$L_{22}$};
			\node[hhh]  (a) at (90:1.2) 	{};
			\node[hhh] 	(b1) at (0.5,2) 	{};
			\node[hhh] 	(b2) at (-0.5,2) 	{};
			\node[hhh] 	(c) at (1,1.2) 	{};
			\node[hhh] 	(d) at (-1,1.2) 	{};
			\node[hhh] 	(e) at (0.5,0.5) 	{};
			\node[hhh] 	(f) at (-0.5,0.5) 	{};
			
			\draw (d)--(a) -- (c) --(b1) --(b2)--  (d) -- (f) -- (e) --(c) (f)--(a)--(e) ;
		\end{scope}
		\begin{scope}[shift={(3,0)}]
			\node 	(graph) at (0:0cm) 	{$L_{23}$};
			\node[hhh]  (a) at (-1,1.5) 	{};
			\node[hhh] 	(b) at (-1,0.5) 	{};
			\node[hhh] 	(c) at (-0.3,1) 	{};
			\node[hhh] 	(g) at (0,1.5) 	{};
			\node[hhh] 	(d) at (1,1.5) 	{};
			\node[hhh] 	(e) at (1,0.5) 	{};
			\node[hhh] 	(f) at (0.3,1) 	{};
			
			\draw (a) -- (c) --(b) --(a) (d) -- (f) -- (e) --(d) (a)--(g)--(d) (b)--(e) (c)--(f) ;
		\end{scope}	
		
		\begin{scope}[shift={(6,0)}]
			\node 	(graph) at (0:0cm) 	{$L_{24}$};
			\node[hhh]  (a) at (90:1.3cm) 	{};
			\node[hhh] 	(b1) at (0.5,2) 	{};
			\node[hhh] 	(b2) at (-0.5,2) 	{};
			\node[hhh] 	(c) at (1,1.3) 	{};
			\node[hhh] 	(d) at (-1,1.3) 	{};
			\node[hhh] 	(e) at (0.4,0.5) 	{};
			\node[hhh] 	(f) at (-0.4,0.5) 	{};
			
			\draw (d)--(a) -- (c) --(b1) (b2)--  (d) -- (f) -- (e) --(c) (f)--(a)--(e) ;
		\end{scope}
		\begin{scope}[shift={(9,0)}]
			\node 	(graph) at (0:0cm) 	{$L_{25}$};
			\node[hhh]  (a) at (-0.5,1.5) 	{};
			\node[hhh] 	(b) at (-0.5,0.5) 	{};
			\node[hhh] 	(c) at (-1,1) 	{};
			\node[hhh] 	(d) at (0.5,1.5) 	{};
			\node[hhh] 	(e) at (0.5,0.5) 	{};
			\node[hhh] 	(f) at (1,1) 	{};
			\node[hhh] 	(g) at (1.5,1) 	{};
			
			\draw (a) -- (c) --(b) --(a) (d) -- (f) -- (e) --(d) (a)--(d) (b)--(e)   (f)--(g);
		\end{scope}	
		
		\begin{scope}[shift={(0,-4)}]
			\node 	(graph) at (0:0cm) 	{$L_{26}$};
			\node[hhh]  (a) at (-0.5,0.5) 	{};
			\node[hhh] 	(b) at (0.5,0.5) 	{};
			\node[hhh] 	(c) at (0.5,1.5) 	{};
			\node[hhh] 	(d) at (-0.5,1.5) 	{};
			\node[hhh] 	(e) at (-0.5,2.5) 	{};
			\node[hhh] 	(f) at (0.5,2.5) 	{};
			\node[hhh] 	(g) at (0.5,3) 	{};
			
			\draw (a)--(b)--(c)--(d)--(a)--(c)  (b)--(d)--(e) -- (f) --(c)--(e)--(f)--(g) ;
		\end{scope}
		
		\begin{scope}[shift={(3,-4)}]
			\node 	(graph) at (0:0cm) 	{$L_{27}$};
			\node[hhh]  (a) at (-0.5,0.5) 	{};
			\node[hhh] 	(b) at (0.5,0.5) 	{};
			\node[hhh] 	(c) at (0.5,1.5) 	{};
			\node[hhh] 	(d) at (-0.5,1.5) 	{};
			\node[hhh] 	(e) at (0,1) 	{};
			\node[hhh] 	(f) at (0,2.5) 	{};
			\node[hhh] 	(g) at (0,2) 	{};
			
			\draw (a)--(b)--(c)--(d)--(a)  (a)--(e)--(b) (c)--(e)--(d)  (f)--(g)  (c)--(g)--(d) ;
		\end{scope}
		\begin{scope}[shift={(6,-4)}]
			\node 	(graph) at (0:0cm) 	{$L_{28}$};
			\node[hhh]  (a) at (-0.5,0.5) 	{};
			\node[hhh] 	(b) at (0.5,0.5) 	{};
			\node[hhh] 	(c) at (0.5,1.5) 	{};
			\node[hhh] 	(d) at (-0.5,1.5) 	{};
			\node[hhh] 	(e) at (0,1) 	{};
			\node[hhh] 	(f) at (0,2.5) 	{};
			\node[hhh] 	(g) at (0,2) 	{};
			
			\draw (a)--(b)--(c)--(d)--(a)  (a)--(e)--(b) (c)--(e)--(d)  (f)--(g)  (c)--(g)--(d) (g)--(e);
		\end{scope}

		\begin{scope}[shift={(9,-4)}]
			\node 	(graph) at (0:0cm) 	{$L_{29}$};
			\node[hhh]  (a) at (0,1.5) 	{};
			\node[hhh] 	(b1) at (0,2.5) 	{};
			\node[hhh] 	(b2) at (1,2.5) 	{};
			\node[hhh] 	(c) at (1,1.5) 	{};
			\node[hhh] 	(d) at (-1,1.5) 	{};
			\node[hhh] 	(e) at (0.5,0.5) 	{};
			\node[hhh] 	(f) at (-0.5,0.5) 	{};
			
			\draw (d)--(a) -- (c) --(b1)--(b2)--(c)  (b1)--(d)--(f) -- (e) --(c) (f)--(a)--(e) ;
		\end{scope}
		\begin{scope}[shift={(0,-7.5)}]
			\node 	(graph) at (0:0cm) 	{$L_{30}$};
			\node[hhh]  (a) at (0,2) 	{};
			\node[hhh] 	(b1) at (0,2.7) 	{};
			\node[hhh] 	(b2) at (0,0.5) 	{};
			\node[hhh] 	(c) at (1,2) 	{};
			\node[hhh] 	(d) at (-1,2) 	{};
			\node[hhh] 	(e) at (0.5,1) 	{};
			\node[hhh] 	(f) at (-0.5,1) 	{};
			
			\draw (d)--(a) -- (c) --(b1) (e)--(b2)--(f)  (b1)--(d)--(f) -- (e) --(c) (f)--(a)--(e) ;
		\end{scope}	
		\begin{scope}[shift={(3,-7.5)}]
			\node 	(graph) at (0:0cm) 	{$L_{31}$};
			\node[hhh]  (e) at (0,0.5) 	{};
			\node[hhh] 	(d) at (0,1.5) 	{};
			\node[hhh] 	(b) at (-0.5,1.5) 	{};
			\node[hhh] 	(c) at (-0.5,0.5) 	{};
			\node[hhh] 	(a) at (-1,1) {};
			\node[hhh] 	(f) at (0.5,1.5) 	{};
			\node[hhh] 	(g) at (0.5,0.5) {};
			
			\draw (c)--(a)--(b)--(c)--(e)--(d)--(b)  (e)--(g)--(f)--(d) (b)--(e) (c)--(d) (e)--(f);
		\end{scope}
		\begin{scope}[shift={(6,-7.5)}]
			\node 	(graph) at (0:0cm) 	{$L_{32}$};
			\node[hhh]  (a) at (0,1) 	{};
			\node[hhh] 	(b) at (-0.5,1.5) 	{};
			\node[hhh] 	(c) at (-0.5,0.5) 	{};
			\node[hhh] 	(d) at (0.5,1.5) 	{};
			\node[hhh] 	(e) at (0.5,0.5) 	{};
			\node[hhh] 	(f) at (-1,1) 	{};
			\node[hhh] 	(g) at (1,1) 	{};
			
			\draw (b)--(a)--(e)--(c)--(a)--(d)--(b)--(c)  (b)--(f)--(c) (d)--(g)--(e)--(d) ;
		\end{scope}

		\begin{scope}[shift={(9,-7.5)}]
			\node 	(graph) at (0:0cm) 	{$L_{33}$};
			\node[hhh]  (a) at (0,1.5) 	{};
			\node[hhh] 	(b) at (-0.8,1.2) 	{};
			\node[hhh] 	(c) at (-0.5,0.5) 	{};
			\node[hhh] 	(d) at (0.5,0.5) 	{};
			\node[hhh] 	(e) at (0.8,1.2) 	{};
			\node[hhh] 	(f) at (0.5,2) 	{};
			\node[hhh] 	(g) at (-0.5,2) 	{};
			
			\draw (a)--(b)--(c)--(d)--(e)--(a)--(c)--(e)--(b)--(d)--(a)  (b)--(g) --(f) --(e)  
			(g)--(a) --(f);
		\end{scope}
		
		\begin{scope}[shift={(4.5,-10)}]
			\node 	(graph) at (0:0cm) 	{$L_{34}$};
			\node[hhh]  (a) at (0,2) 	{};
			\node[hhh] 	(b) at (-0.3,1.2) 	{};
			\node[hhh] 	(c) at (0.3,1.2) 	{};
			\node[hhh] 	(d) at (-1,1.2) 	{};
			\node[hhh] 	(e) at (1,1.2) 	{};
			\node[hhh] 	(f) at (-0.3,0.5) 	{};
			\node[hhh] 	(g) at (0.3,0.5) 	{};
			
			\draw (b)--(a)--(c) (d)--(a)--(e)  (d)--(b) --(c) --(e) --(a) 
			(d)--(f) --(b) (c)--(f)
			(g) --(b) (c)--(g)--(e)
			(f)--(g);
		\end{scope}
	\end{tikzpicture}	
	\caption{The line-split graphs that are not $\elm$(line-split) graphs}
	\label{fig: not elm Line-split- graphs}
\end{figure}
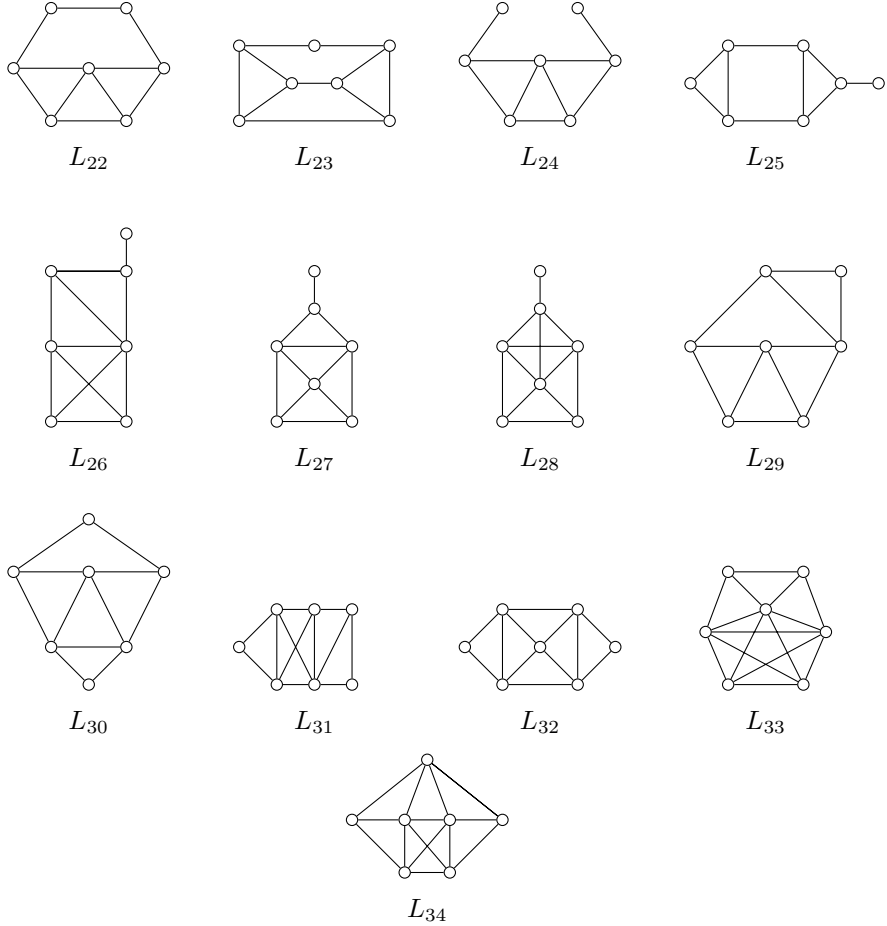

\subsubsection{L1-split graphs that are line}
\begin{lemma}\label{lem: L1-free-split graphs that are line}
	The graph $L_{14}$ is the only $L_{1}$-split graph that is line.
\end{lemma}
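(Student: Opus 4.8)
The plan is to reduce the statement to the claw analysis already completed earlier. Since $L_{1}$ is exactly the claw and a line graph is $\{L_{1},\dots,L_{9}\}$-free, every line graph is in particular $L_{1}$-free, i.e.\ claw-free. Hence, if $G$ is an $L_{1}$-split graph that is line, then $G$ is simultaneously claw-split (that is, $L_{1}$-split) and claw-free, so $G$ is a \emph{claw-free-split} graph. By Corollary \ref{Corollary: FCC(claw)} the bull is the only claw-free-split graph, so $G$ must be isomorphic to the bull. A direct comparison of the drawings shows that the bull ($H_{5}$ in Figure \ref{Figure: the graphs in CC(claw)}) is isomorphic to $L_{14}$ in Figure \ref{fig: elm(Line-split graphs)}: each is a triangle carrying one pendant vertex on each of two of its three vertices. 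This settles uniqueness.

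It then remains to confirm that $L_{14}$ really belongs to the family, i.e.\ that it is both line and $L_{1}$-split, so that the characterization is exact rather than vacuous. That $L_{14}$ is a line graph is already recorded in Theorem \ref{lem: line-split-graphs are line graphs}; alternatively it follows from Theorem \ref{lineCharcaterization by harary} by partitioning the edges into the triangle (one clique) and the two pendant edges (two further cliques), whereupon each pendant-bearing triangle vertex lies in exactly two cliques and every other vertex in one. For $L_{1}$-splitness I would contract the edge joining the two triangle vertices that carry the pendants: the merged vertex becomes adjacent to the remaining triangle vertex and to both pendants, and these three neighbours are pairwise nonadjacent, so the contraction is the claw $L_{1}$. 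Thus $L_{14}$ is an $L_{1}$-split line graph, and together with the uniqueness above this yields the claim.

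I do not anticipate any genuine obstacle: the substantive work is carried by Corollary \ref{Corollary: FCC(claw)}, and what remains is the bookkeeping of identifying the bull with $L_{14}$ together with the two routine membership checks. Should a self-contained argument avoiding the claw corollary be preferred, one could instead invoke Theorem \ref{Theorem: CC=H[]} to identify the $L_{1}$-split graphs with $splitting(L_{1})$, enumerate the splittings of the claw over the two orbits of $\mathrm{Aut}(L_{1})$ (the centre and the three leaves), and verify directly that the only splitting producing a claw-free graph distributes the three leaves as one shared neighbour and one private neighbour on each side, which is again the bull; the leaf orbit is eliminated by Proposition \ref{pro: split graphs that are not free with degree one} and the degenerate centre splittings by Proposition \ref{pro: split graphs that are not free}.
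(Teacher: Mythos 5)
Your proof is correct, but it takes a genuinely different route from the paper. The paper's own proof re-runs the splitting machinery: it partitions $V(L_{1})$ into the two orbits of $\mathrm{Aut}(L_{1})$ (the centre and the leaves), constructs $splitting(L_{1},r)$ and $splitting(L_{1},s)$ case by case, and observes that every resulting graph except $L_{14}$ contains a claw and hence is not line --- exactly the ``self-contained argument'' you sketch in your last paragraph. You instead shortcut the enumeration entirely: since line graphs are $\{L_{1},\dots,L_{9}\}$-free and hence claw-free, any $L_{1}$-split line graph is claw-split \emph{and} claw-free, i.e.\ claw-free-split, so Corollary \ref{Corollary: FCC(claw)} forces it to be the bull, which you correctly identify with $L_{14}$; you then close the loop with the two membership checks ($L_{14}$ is line via the Krausz partition, and contracting the edge between the two pendant-bearing triangle vertices yields the claw, so $L_{14}$ is $L_{1}$-split). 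Your reduction is shorter, reuses work the paper has already certified, and cleanly exposes why the claw section feeds the line-graph section. What it does not buy is a template for the remaining cases: the analogous shortcut for Lemmas \ref{lem: L2-free-split graphs that are line}--\ref{lem: L9-free-split graphs that are line} would need an analogue of Proposition \ref{Proposition: CC(claw)}/Corollary \ref{Corollary: FCC(claw)} for each $L_{i}$, which the paper does not have; that is presumably why the paper proves this lemma by the same orbit-enumeration pattern it then repeats for $L_{2},\dots,L_{9}$.
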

\begin{proof}
	To obtain the $L_{1}$-free-split graphs that are line, we construct the set of graphs $splitting(L_{1})$ followed by identifying among it the graphs that are line. 
	
	\begin{minipage}{0.75\textwidth}
		Hence, we partition the vertex set into its orbits generated by the automorphism group. Afterwards, we select an arbitrary vertex from every orbit and construct the set of graphs $splitting(L_{1})$ related to such vertex and identify the line graphs among them.
		The graph $G$ on the right is a labeled $L_{1}$ graph whose vertex set admits the following partition into its orbits $\{\{r\},\{s,t,u\}\}$. W.l.o.g, we choose an arbitrary vertex from every block from the partition. Hence we have two cases.
	\end{minipage}\hfill
	\begin{minipage}{0.2\textwidth}
		
		\centering
		\begin{tikzpicture}[hhh/.style={draw=black,circle,inner sep=1pt,minimum size=0.2cm},scale=0.7]
			\node[hhh]  (a) at (0:0) 	{$r$};
			\node[hhh] 	(b) at (90:1) 	{$s$};
			\node[hhh] 	(c) at (210:1) 	{$t$};
			\node[hhh] 	(d) at (-30:1) 	{$u$};
			
			\node (graph) at (-90:1.2) {$G$};
			\draw (b)--(a) -- (c) (d) --  (a);
		\end{tikzpicture}
	\end{minipage}
	
	Case 1, vertex $r$: we replace $r$ by two adjacent vertices $r_{1}$ and $r_{2}$. If $r_{1}$ or $r_{2}$ is adjacent to $\{s,t,u\}$ then the constructed graphs is claw-exist. Otherwise, the constructed graphs are either isomorphic to $L_{14}$ or claw-exist. Thus, the line graph in $splitting(G,r)$ is $L_{14}$.
	
	Case 2, vertex $s$: we replace $s$ by two adjacent vertices $s_{1}$ and $s_{2}$. At least one of $s_{1}$ and $s_{2}$ is adjacent to $r$, say $s_{1}$. Hence $\{r,s_{1},t,u\}$ induces claw. Thus, there is no line graph in $splitting(G,s)$. 
\end{proof}

The proofs of Lemma \ref{lem: L2-free-split graphs that are line} to \ref{lem: L9-free-split graphs that are line} are similar to the proof of Lemma \ref{lem: L1-free-split graphs that are line}. Hence, we present sketches of the proofs for the interested readers.
\subsubsection{L2-split graphs that are line}

\begin{lemma}\label{lem: L2-free-split graphs that are line}
	The graphs $L_{15}$ and $L_{16}$ are the only $L_{2}$-split graphs that are line.
\end{lemma}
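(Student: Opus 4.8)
The plan is to mirror the proof of Lemma~\ref{lem: L1-free-split graphs that are line}. By Theorem~\ref{Theorem: CC=H[]} the $L_{2}$-split graphs are exactly the members of $splitting(L_{2})$, so it suffices to generate $splitting(L_{2})$ and retain those that are line graphs. I would fix a labeling of $L_{2}$ in which the unique degree-$2$ vertex $p$ is adjacent to $z_{1},z_{2}$, the pair $q_{1},q_{2}$ is adjacent to one another and to $z_{1},z_{2}$, and $z_{1},z_{2}$ are each adjacent to $p,q_{1},q_{2}$. The orbits of $Aut(L_{2})$ are then $\{p\}$, $\{q_{1},q_{2}\}$, and $\{z_{1},z_{2}\}$; the last two are separated because $N(q_{i})$ induces a $P_{3}$ whereas $N(z_{i})$ induces $K_{2}\cup K_{1}$. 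Hence I only split a single representative from each orbit, ranging over all decompositions $N(v)=U\cup W$ and, crucially, allowing $U\cap W\neq\varnothing$: a vertex lying in both $U$ and $W$ is joined to both clones and thus creates a triangle, which is precisely the mechanism that produced the bull in Lemma~\ref{lem: L1-free-split graphs that are line}.

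For the filtering I would use two inexpensive necessary conditions: a line graph is claw-free, and $L_{2}$ is itself one of the forbidden induced subgraphs for line graphs. Any split in which a clone acquires three pairwise-nonadjacent neighbors is discarded on the spot, and every degenerate split with $W\subseteq U$ (equivalently $U=N(v)$, the situation of Proposition~\ref{pro: split graphs that are not free}) reconstructs $v$ and leaves an induced $L_{2}$, so it is discarded as well. I expect these two tests to annihilate every split of $p$: the disjoint split $\{z_{1}\}\mid\{z_{2}\}$ returns the forbidden graph $L_{3}$, while the remaining splits of $p$ each contain an induced $L_{2}$; and to kill all splits of $q_{1}$ and of $z_{1}$ except the few described next.

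The surviving line graphs come from splits in which the two clones share exactly one neighbor. Splitting $z_{1}$ with $U=\{p,q_{1}\}$ and $W=\{p,q_{2}\}$ (the shared neighbor being the degree-$2$ vertex $p$) lifts every vertex to degree $3$ and returns the triangular prism $L_{16}$; splitting $z_{1}$ with $U=\{p,q_{1}\}$ and $W=\{q_{1},q_{2}\}$ (shared neighbor the degree-$3$ vertex $q_{1}$) produces the degree sequence $(4,3,3,3,3,2)$ and returns $L_{15}$, which also reappears when $q_{1}$ is split with its clones sharing $q_{2}$. No new prism can arise from splitting $q_{1}$, since none of its neighbors has degree $2$ to absorb the shared edge. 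That $L_{15}$ and $L_{16}$ really are line graphs is Theorem~\ref{lem: line-split-graphs are line graphs} (via Krausz's Theorem~\ref{lineCharcaterization by harary}), so no extra verification is required, and they are exactly the two graphs shown in Figure~\ref{fig: elm(Line-split graphs)}.

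The main obstacle is completeness of the bookkeeping: allowing $U\cap W\neq\varnothing$ makes the list of decompositions sizable, and one must be certain to catch the overlapping splits that build the prism and $L_{15}$ while ruling out all the rest. Because the claw test and the ``contains an induced $L_{2}$'' test each reduce a case to a single-line check, the genuine effort is organizational --- grouping the cases by which orbit is split and by $|U\cap W|$ --- rather than computational.
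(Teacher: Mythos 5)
Your strategy is exactly the paper's: partition $V(L_{2})$ into the orbits $\{p\}$, $\{q_{1},q_{2}\}$, $\{z_{1},z_{2}\}$ of $Aut(L_{2})$, split one representative per orbit over all (possibly overlapping) decompositions $N(v)=U\cup W$, and filter the results by necessary conditions for being a line graph. Your positive identifications are also correct: the split of $z_{1}$ with shared neighbor $p$ is the prism $L_{16}$, the splits with a shared degree-$3$ neighbor (of $z_{1}$ or of $q_{1}$) give $L_{15}$, and your degree argument that splitting $q_{1}$ cannot produce the prism (because $p$ keeps degree $2$) is sound.

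The gap is in the filtering claim for the splits of $z_{1}$. You assert that the claw test and the induced-$L_{2}$ test ``kill all splits of $q_{1}$ and of $z_{1}$ except the few described next.'' That is false for $z_{1}$: the disjoint split $U=\{z_{1}\text{'s neighbor }p\}$, $W=\{q_{1},q_{2}\}$ yields precisely $L_{3}$ again, with $p$ and the clone attached to it playing the two adjacent degree-$2$ vertices --- the same graph you obtained from the disjoint split of $p$. Since $L_{3}$ is one of the \emph{minimal} Beineke graphs, it is both claw-free and $L_{2}$-free, so it passes both of your announced tests; as written, your procedure leaves it in the survivor list even though it is not a line graph, and the lemma's ``only $L_{15}$ and $L_{16}$'' conclusion would not follow. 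You in fact already possess the needed third criterion --- you invoked it to discard the disjoint split of $p$ (``returns the forbidden graph $L_{3}$'') --- but it must be applied to the $z_{1}$ splits as well. This is exactly why the paper's sketch of its Case 3 (vertex $u$) lists the outcomes as $L_{1}$-exist, \emph{$L_{3}$-exist}, or isomorphic to $L_{15}$ or $L_{16}$: the $L_{3}$-exist category does the work your two tests cannot. With that one repair the enumeration closes and your proof agrees with the paper's.
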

\begin{proof}
	The graph $G$ on the right is a labeled $L_{2}$ graph whose vertex set admits the following partition into its orbits $\{\{r\},\{s,t\},\{u,v\}\}$.
	
	\begin{minipage}{0.75\textwidth}
		W.l.o.g, we arbitrarily choose a vertex from every block from the partition. Hence we have three cases. 
		
		Case 1,vertex $r$: the $L_{2}$-free-split graph in $splitting(G,r)$ is $L_{3}$. Thus, no $L_{2}$-free-split graph in $splitting(G,r)$ is line. 
	\end{minipage}\hfill
	\begin{minipage}{0.2\textwidth}
		\centering
		\begin{tikzpicture}[hhh/.style={draw=black,circle,inner sep=1pt,minimum size=0.2cm},scale=0.7]
			\node[hhh]  (r) at (0,2.5) 	{$r$};
			\node[hhh] 	(s) at (0,1.5) 	{$s$};
			\node[hhh] 	(t) at (0,0.5)	{$t$};
			\node[hhh] 	(u) at (-1,1) {$u$};
			\node[hhh] 	(v) at (1,1) {$v$};
			
			\node (graph) at (-90:0.2) {$G$};
			
			\draw (v)--(r)--(u) (v)--(s)--(u) (v)--(t)--(u) (t)--(s);
		\end{tikzpicture}
	\end{minipage}
	
	Case 2, vertex $s$: the $L_{2}$-free-split graphs in $splitting(G,s)$ are $L_{1}$-exist or isomorphic to $L_{15}$. Thus, the graph in $splitting(G,s)$ that is line is $L_{15}$.
	
	Case 3, vertex $u$: the $L_{2}$-free-split graphs in $splitting(G,u)$ are $L_{1}$-exist, $L_{3}$-exist, or isomorphic to either $L_{15}$ or $L_{16}$. Thus, the graphs in $splitting(G,u)$ that are line are $L_{15}$ or $L_{16}$. 
\end{proof}

\subsubsection{L3-split graphs that are line}

\begin{lemma}\label{lem: L3-free-split graphs that are line}
	The graphs $L_{22}$ and $L_{23}$ are the only $L_{3}$-split graphs that are line.
\end{lemma}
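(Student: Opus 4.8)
The plan is to imitate the proof of Lemma~\ref{lem: L1-free-split graphs that are line}. Since every line graph is $\{L_{1},\dots,L_{9}\}$-free and in particular $L_{3}$-free, an $L_{3}$-split graph that is line is automatically an $L_{3}$-free-split graph, so by Theorem~\ref{Theorem: CC=H[]} it lies in $splitting(L_{3})$. Thus I would generate $splitting(L_{3})$ by the orbit-based construction, discard the non-free-split members, and then single out the line graphs among the survivors by the Krausz--Harary criterion (Theorem~\ref{lineCharcaterization by harary}).

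First I would record the symmetry of $L_{3}$. It is convenient to view $L_{3}$ as a diamond $K_{4}$ minus an edge on $\{b,c,d,e\}$ (with $d,e$ the nonadjacent pair) together with the length-three path $d-a-f-e$. Its automorphism group is then $\mathbb{Z}_{2}\times\mathbb{Z}_{2}$, generated by the transposition $b\leftrightarrow c$ and the reflection $(a\,f)(d\,e)$, since $b,c$ have an all-degree-$3$ neighborhood whereas $d,e$ each have a degree-$2$ neighbor. Hence $V(L_{3})$ partitions into the three orbits $\{a,f\}$ (the degree-$2$ vertices), $\{d,e\}$ (the degree-$3$ vertices adjacent to a degree-$2$ vertex), and $\{b,c\}$ (the degree-$3$ vertices adjacent to neither). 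I would therefore treat three cases, one per representative $v\in\{a,d,b\}$, and in each enumerate the graphs $splitting(L_{3},v,U,W)$ over all bipartitions $U\cup W=N(v)$, discarding by Proposition~\ref{pro: split graphs that are not free} the trivial ones with $U=N(v)$ or $W=N(v)$, which reproduce $L_{3}$ as an induced subgraph and so are non-line.

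For each surviving split I would certify line-ness either by an induced forbidden subgraph or by a clique edge-partition. The degree-$2$ case is quick: the unique nontrivial split of $a$ lengthens the connecting path to $d-\cdot-\cdot-e$, which contains an induced $L_{4}$ and is non-line. The guiding observation for the two degree-$3$ representatives is that a line graph is claw-free, so any split that places a nonadjacent pair of neighbors of $v$ on one new vertex is immediately $L_{1}$-exist; because $N(d)$ induces an edge plus an isolated vertex and $N(b)$ induces a path, this eliminates most bipartitions (the one surviving disjoint bipartition at $d$ again yields an induced $L_{4}$). The exceptions are precisely the bipartitions with a repeated neighbor in $U\cap W$: duplicating the degree-$2$ neighbor of $d$ gives the prism with one rung subdivided, namely $L_{23}$, while duplicating one of the two central neighbors of $d$, or the single central neighbor of $b$, gives $L_{22}$. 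Both $L_{22}$ and $L_{23}$ admit a clique edge-partition with every vertex in at most two cliques, so by Theorem~\ref{lineCharcaterization by harary} they are line graphs, and the proof concludes.

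The main obstacle is the bookkeeping for the degree-$3$ representatives, specifically the bipartitions with $U\cap W\neq\emptyset$: these are the only ones that can be line, they must be enumerated carefully up to the residual symmetry, and one must recognize that distinct splittings—a duplication at $d$ and a duplication at $b$—produce the same graph $L_{22}$ up to isomorphism, so the final list is exactly $\{L_{22},L_{23}\}$ rather than an over-count. Once these isomorphisms are pinned down and the two clique partitions are exhibited, the statement follows.
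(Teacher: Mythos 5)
Your proposal follows the same skeleton as the paper's (sketch) proof: the same orbit decomposition $\{a,f\}$, $\{b,c\}$, $\{d,e\}$ of $V(L_{3})$, one splitting case per representative, and the same final identifications (the degree-$2$ split and the disjoint split at $d$ contain $L_{4}$; duplicating a central neighbor at $d$ or the central neighbor at $b$ gives $L_{22}$; duplicating $a$ at $d$ gives $L_{23}$), all of which are correct. The genuine problem is your elimination principle. The claim ``any split that places a nonadjacent pair of neighbors of $v$ on one new vertex is immediately $L_{1}$-exist'' is false as stated, and your own case list contradicts it: the splitting of $d$ with $U=\{a,b\}$, $W=\{a,c\}$ places the nonadjacent pair $\{a,b\}$ on the new vertex $d_{1}$, yet you (correctly) assert it produces $L_{23}$, which is claw-free and line. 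A claw centered at $d_{1}$ needs the pair to be nonadjacent to $d_{2}$ as well, i.e.\ the pair must be disjoint from $W$; read literally your principle would ``prove'' $L_{23}$ non-line, and read in the corrected form it eliminates fewer bipartitions than you claim, so the assertion that the repeated-neighbor bipartitions are ``precisely'' the exceptions does not stand.

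Second, even with the corrected principle, the case analysis at $b$ (whose neighborhood induces the path $d-c-e$) has holes. The disjoint bipartition $U=\{d\}$, $W=\{c,e\}$ has both parts complete, so no claw can appear at a new vertex; it is not a repeated-neighbor bipartition, and your induced-$L_{4}$ remark was made only for the orbit of $d$, so nothing in your proposal addresses it. Likewise the repeated-neighbor bipartitions duplicating $d$ or $e$ at $b$, e.g.\ $U=\{c,d\}$, $W=\{d,e\}$, are never analyzed (your exceptions list treats only the duplication of $c$). Both of these are in fact claw-exist, but the claws sit at \emph{original} vertices of $L_{3}$, outside the reach of your criterion: writing $b_{1},b_{2}$ for the new vertices with $N(b_{1})=U\cup\{b_{2}\}$ and $N(b_{2})=W\cup\{b_{1}\}$, in the first case $\{d,a,c,b_{1}\}$ induces a claw centered at $d$, and in the second case $\{d,a,c,b_{2}\}$ does. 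So the statement of the lemma survives, but your proof does not establish it; what is needed (and what the paper's Case~2 asserts) is the vertex-by-vertex check that every splitting of $b$ other than the duplication of $c$ is $L_{1}$-exist, and your stated principle cannot deliver that check.
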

\begin{proof}
	The graph $G$ on the right is a labeled $L_{3}$ graph whose vertex set admits the following partition into its orbits $\{\{r,w\},\{s,t\},\{u,v\}\}$. W.l.o.g, we choose an arbitrary vertex from every block out of the partition. Hence we have three cases.

	\begin{minipage}{0.75\textwidth}
		Case 1, vertex $r$: the $L_{3}$-free-split graph in $splitting(G,r)$ is a graph that is $L_{4}$-exist. Thus, no $L_{3}$-split graph in $splitting(G,r)$ is line.

		Case 2, vertex $s$: the $L_{3}$-free-split graphs in $splitting(G,s)$ are $L_{1}$-exist or isomorphic to $L_{22}$. Thus, the $L_{3}$-split graph in $splitting(G,s)$ that is line is $L_{22}$.
	\end{minipage}\hfill
	\begin{minipage}{0.2\textwidth}
		
		\centering
		\begin{tikzpicture}[hhh/.style={draw=black,circle,inner sep=1pt,minimum size=0.2cm},scale=0.7]
			\node[hhh]  (a) at (1,1) 	{$r$};
			\node[hhh]  (f) at (-1,1) 	{$w$};
			\node[hhh] 	(b) at (0:0) 	{$s$};
			\node[hhh] 	(c) at (-90:1) 	{$t$};
			\node[hhh] 	(d) at (-1,-0.5) 	{$u$};
			\node[hhh] 	(e) at (1,-0.5) 	{$v$};
			
			\node (graph) at (-90:1.7) {$G$};
			\draw (d)--(b) -- (c) --(d) --  (f) -- (a) -- (e) -- (c) (b) --(e) ;
		\end{tikzpicture}
	\end{minipage}

	Case 3, vertex $u$: the $L_{3}$-free-split graphs in $splitting(G,u)$ are $L_{1}$-exist, $L_{4}$-exist, or isomorphic to either $L_{22}$ or $L_{23}$. Thus, the $L_{3}$-split graphs in $splitting(G,u)$ that are line are $L_{22}$ and $L_{23}$.
\end{proof}

\subsubsection{L4-split graphs that are line}

\begin{lemma}\label{lem: L4-free-split graphs that are line}
	The graphs $L_{24}$ and $L_{25}$ are the only $L_{4}$-split graphs that are line.
\end{lemma}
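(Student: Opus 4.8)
The plan is to imitate the method of Lemma~\ref{lem: L1-free-split graphs that are line}: by Theorem~\ref{Theorem: CC=H[]} every $L_4$-split graph lies in $splitting(L_4)$, so I first compute the orbits of $Aut(L_4)$, then for one representative $v$ of each orbit I run through the family $splitting(L_4,v)$, and finally I keep only the graphs that are line (equivalently, by the Beineke theorem, those that are $\{L_1,\dots,L_9\}$-free). The graph $L_4$ (Figure~\ref{fig: Forbidden graphs in line graphs}) is the diamond $K_4-e$ with a pendant vertex attached to each of the two non-adjacent vertices of the diamond; its automorphism group acts on the vertices with exactly three orbits: the two pendants, the two pendant-carrying diamond vertices, and the two remaining (adjacent, central) diamond vertices. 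So there are three cases.

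The pendant orbit is settled at once: its vertices have degree $1$, so Proposition~\ref{pro: split graphs that are not free with degree one} says $splitting(L_4,v)$ has no $L_4$-free-split graph, and since every line graph is $L_4$-free this orbit contributes nothing. For each of the other two orbits I enumerate the splittings $splitting(L_4,v,U,W)$, using Proposition~\ref{pro: split graphs that are not free} to discard the cases $U=N(v)$ and $W=N(v)$. The point that is easy to get wrong, and which I will stress, is that $U$ and $W$ need \emph{not} be disjoint: a neighbor of $v$ may be placed in both parts and so become adjacent to both new vertices. These overlapping splittings are exactly the ones that create triangles and hence produce the line graphs; treating $U,W$ as a genuine bipartition would lose $L_{24}$ and $L_{25}$ entirely. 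Accordingly I classify a splitting by recording, for each neighbor of $v$, whether it goes into $U$ only, $W$ only, or both, and I cut the list down using the $U\leftrightarrow W$ symmetry together with the stabilizer of $v$.

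For the central-vertex orbit I expect every splitting either to contain an induced claw $L_1$ (hence to be non-line, since line graphs are claw-free) or to be isomorphic to $L_{24}$; the line one is the overlapping split in which the \emph{other} central vertex is shared and becomes the degree-$4$ vertex of $L_{24}$. For the pendant-carrying orbit I expect four types: one that rebuilds an induced $L_4$ (non-line), one with an induced claw (non-line), the overlapping split sharing a central vertex, which gives $L_{24}$ again, and the overlapping split sharing the pendant, which gives $L_{25}$ (two triangles joined by two edges, with the shared pendant as the apex of the new triangle). To close the argument I then verify by Theorem~\ref{lineCharcaterization by harary} that $L_{24}$ and $L_{25}$ are genuinely line: for $L_{24}$ the two triangles through its degree-$4$ vertex together with the three remaining edges as cliques, and for $L_{25}$ the two triangles together with the two connecting edges and the pendant edge, in each case putting every vertex in at most two cliques. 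Matching the surviving splits to the pictures in Figure~\ref{fig: not elm Line-split- graphs} completes the proof.

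The hard part will not be any single idea but the book-keeping. I must make the overlap enumeration genuinely exhaustive, certify that each discarded splitting really does contain an induced $L_1$ or $L_4$, and — most error-prone — correctly read off the isomorphism type of each surviving graph, in particular noticing that $L_{24}$ is produced by \emph{both} nontrivial orbits so that it is not mistakenly counted as a new graph. To keep this under control I would fix explicit labels for the six vertices of $L_4$ and track the image of every neighbor through each splitting, as was done for $L_1,L_2,L_3$.
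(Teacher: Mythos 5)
Your proposal follows the paper's method exactly: reduce to $splitting(L_{4})$ via Theorem~\ref{Theorem: CC=H[]}, partition $V(L_{4})$ into the three orbits (pendants, pendant-carrying diamond vertices, central adjacent diamond vertices), dismiss the pendant orbit by Proposition~\ref{pro: split graphs that are not free with degree one}, and then identify the line graphs among the splittings at one representative of each remaining orbit as $L_{24}$ (central orbit) and $L_{24},L_{25}$ (pendant-carrying orbit), which is precisely the paper's Cases 1--3. Your additional emphasis on allowing $U$ and $W$ to overlap is correct and indeed essential---$L_{24}$ and $L_{25}$ arise only from overlapping splittings---so the proposal is a faithful (and somewhat more detailed) version of the paper's sketched proof.
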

\begin{proof}
	The graph $G$ is a labeled $L_{4}$ graph whose vertex set admits the following partition into its orbits $\{\{r,w\},\{s,t\},\{u,v\}\}$.
	
	\begin{minipage}{0.75\textwidth}
		W.l.o.g, we choose an arbitrary vertex from every block out of the partition. Hence we have three cases.
		
		Case 1, vertex $r$: there is no $L_{4}$-free-split graph in $splitting(G,r)$. 
		
		Case 2, vertex $s$: the $L_{4}$-free-split graphs in $splitting(G,s)$ are $L_{1}$-exist or isomorphic to $L_{24}$. Thus, the $L_{4}$-split graph in $splitting(G,s)$ that is line is $L_{24}$.
	\end{minipage}\hfill
	\begin{minipage}{0.2\textwidth}
		
		\centering
		\begin{tikzpicture}[hhh/.style={draw=black,circle,inner sep=1pt,minimum size=0.2cm},scale=0.7]
			\node[hhh]  (r) at (-1,2) {$r$};
			\node[hhh]  (w) at (1,2) 	{$w$};
			\node[hhh] 	(s) at (0,1.5) 	{$s$};
			\node[hhh] 	(t) at (0,0.5)	{$t$};
			\node[hhh] 	(u) at (-1,1) {$u$};
			\node[hhh] 	(v) at (1,1) {$v$};
			
			\node (graph) at (-90:0.2) {$G$};
			\draw (v)--(w) (r)--(u) (v)--(s)--(u) (v)--(t)--(u) (t)--(s);
		\end{tikzpicture}
	\end{minipage}
	
	Case 3, vertex $u$: the $L_{4}$-free-split graphs in $splitting(G,u)$ are $L_{1}$-exist or isomorphic to either $L_{24}$ or $L_{25}$. Thus, the $L_{4}$-split graphs in $splitting(G,u)$ that are line are $L_{24}$ and $L_{25}$.
\end{proof}

\subsubsection{L5-split graphs that are line}
\begin{lemma}\label{lem: L5-free-split graphs that are line}
	The graphs $L_{17},L_{26},L_{27}$, and $L_{28}$ are the only $L_{5}$-split graphs that are line.
\end{lemma}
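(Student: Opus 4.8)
The plan is to reuse the recipe of Lemma~\ref{lem: L1-free-split graphs that are line}: assemble $splitting(L_5)$ orbit by orbit and then keep only the line graphs. I would first fix a labeling by noting that $L_5$ is a clique $\{t,u,v,w\}\cong K_4$ together with a vertex $s$ adjacent to exactly $t$ and $u$ and a pendant vertex $r$ on $s$. The orbits of $\mathrm{Aut}(L_5)$ are then immediate: $r$ is the unique leaf, $s$ the unique vertex adjacent to it, $t,u$ the two clique vertices seeing $s$, and $v,w$ the two that do not; hence the orbits are $\{r\},\{s\},\{t,u\},\{v,w\}$, and it suffices to treat the four splittings at $r$, $s$, $t$ and $v$.

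The vertex $r$ is dismissed at once, since it has degree $1$ and Proposition~\ref{pro: split graphs that are not free with degree one} then gives no $L_5$-free-split graph in $splitting(L_5,r)$. For each of $s$, $t$, $v$ I would replace the chosen vertex by two adjacent vertices $x_1,x_2$, range over all ways of distributing its neighborhood (shared neighbors allowed, both parts proper by Proposition~\ref{pro: split graphs that are not free}), and discard a split as soon as it contains an induced claw (so is non-line) or an induced $L_5$ (so is not $L_5$-free). At $s$ (neighborhood $\{r,t,u\}$) exactly two distributions survive: sharing the pendant $r$ while separating $t$ from $u$ produces $L_{17}$, and sharing one clique-vertex while separating $r$ from the other clique-vertex produces $L_{26}$; every other distribution either isolates two nonadjacent neighbors from some $x_i$, giving a claw, or rebuilds an induced $L_5$. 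The analogous enumeration at $v$ (neighborhood $\{t,u,w\}$) leaves only $L_{27}$, and the one at the degree-$4$ vertex $t$ (neighborhood $\{s,u,v,w\}$) leaves only $L_{28}$. In each surviving case, contracting the split edge recovers $L_5$, which simultaneously certifies membership in $splitting(L_5)$ and identifies the correct source orbit.

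To confirm that the four survivors are genuinely line rather than merely claw-free, I would exhibit an explicit Krausz clique-partition for each and apply Theorem~\ref{lineCharcaterization by harary}. The hardest part is the degree-$4$ vertex $t$: it admits far more splittings than a degree-$3$ vertex, and several of these are claw-free yet still contain an induced member of $\{L_2,\dots,L_9\}$, so distinguishing the one line graph $L_{28}$ from the rest forces me to test the full Beineke list rather than claw-freeness alone. A secondary subtlety is the positive direction itself: the naive partition built around the $K_4$ of $L_5$ can violate the at-most-two-cliques condition (for $L_{27}$ one must in fact distribute the triangle $\{t,u,w\}$ across three distinct cliques), so some care is needed to write down a valid partition. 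I would keep case $t$ finite and readable by branching first on how the triangle of clique-neighbors $\{u,v,w\}$ is distributed between $x_1$ and $x_2$, and only then on the placement of $s$.
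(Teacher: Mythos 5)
Your proposal follows the paper's route exactly: partition $V(L_{5})$ into the orbits $\{r\},\{s\},\{t,u\},\{v,w\}$, enumerate the splittings at one representative per orbit, and filter out the non-line graphs. Your attribution of $L_{17},L_{26}$ to the splittings at $s$ and of $L_{27}$ to the splittings at $v$ agrees with the paper, and your dismissal of $r$ via Proposition~\ref{pro: split graphs that are not free with degree one} is sound (a graph with an induced $L_{5}$ cannot be line, so ``no $L_{5}$-free-split graph'' does imply ``no line graph'' there).

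However, your bookkeeping at the degree-$4$ vertex $t$ contains a concrete error: splitting at $t$ does \emph{not} leave only $L_{28}$. Take $N(t_{1})=\{u,v,w,t_{2}\}$ and $N(t_{2})=\{s,u,t_{1}\}$: the result is the $K_{4}$ on $\{u,v,w,t_{1}\}$ with $t_{2}$ attached to $u,t_{1}$, then $s$ attached to $t_{2},u$, then the pendant $r$, i.e.\ a graph isomorphic to $L_{26}$. Similarly, $N(t_{1})=\{v,w,t_{2}\}$ and $N(t_{2})=\{s,u,v,t_{1}\}$ gives the wheel with hub $v$ and rim $w\,t_{1}\,t_{2}\,u$, with $s$ attached to the adjacent rim vertices $t_{2},u$ and $r$ pendant on $s$, i.e.\ $L_{27}$. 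Both are line graphs arising in $splitting(L_{5},t)$, and the paper's Case~3 records precisely this: the survivors at $t$ are $L_{26}$, $L_{27}$, and $L_{28}$. Because these extra survivors are isomorphic to graphs you already collected at $s$ and $v$, your final list---and hence the lemma---is still correct; but since a proof of this kind is nothing more than the reported outcome of an exhaustive enumeration, the case-$t$ claim as stated is false and would be caught by any reader who checks it: the two distributions above would have to be ``discarded'' as claw-exist, $L_{5}$-exist, or Beineke-forbidden, and they are none of these. The fix is only to the bookkeeping, not to the method or the conclusion.
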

\begin{proof}
	The graph $G$ on the right is a labeled $L_{5}$ graph whose vertex set admits the following partition into its orbits $\{\{r\},\{s\},\{t,u\},\{v,w\}\}$. W.l.o.g, we choose an arbitrary vertex from every block out of the partition. Hence we have four cases.
	
	\begin{minipage}{0.75\textwidth}
		Case 1, vertex $r$: there is no line graph in $splitting(G,r)$.
		
		Case 2, vertex $s$: the $L_{5}$-free-split graphs in $splitting(G,s)$ are $L_{1}$-exist or isomorphic to either $L_{26}$ or $L_{17}$. Thus, $L_{26}$ and $L_{17}$ are the only $L_{5}$-split graphs in $splitting(G,s)$ that are line.
		
		Case 3, vertex $t$: the $L_{5}$-free-split graphs in $splitting(G,t)$ are $L_{1}$-exist, $L_{2}$-exist, or isomorphic to one of the following graphs: $L_{26}$,$L_{27}$, and $L_{28}$. Consequently, the only line graphs in the family of $L_{5}$-split graphs are $L_{26}$, $L_{27}$, and $L_{28}$.
	\end{minipage}\hfill
	\begin{minipage}{0.2\textwidth}
		
		\centering
		\begin{tikzpicture}[hhh/.style={draw=black,circle,inner sep=1pt,minimum size=0.2cm},scale=0.7]
			\node[hhh]  (r) at (0,3.1) 	{$r$};
			\node[hhh] 	(s) at (0,2.3) 	{$s$};
			\node[hhh] 	(t) at (-1,1.5)	{$t$};
			\node[hhh] 	(u) at (1,1.5) {$u$};
			\node[hhh] 	(v) at (1,0.5) {$v$};
			\node[hhh]  (w) at (-1,0.5) 	{$w$};
			
			\node (graph) at (-90:0.2) {$G$};
			\draw (r)--(s)--(t)--(u)--(s) (u)--(v)--(w)--(t)--(v) (u)--(w);
		\end{tikzpicture}
	\end{minipage}
	
	Case 4, vertex $v$: the $L_{5}$-free-split graphs in $splitting(G,v)$ are $L_{1}$-exist or isomorphic to $L_{27}$. Thus, $L_{27}$ is the only $L_{5}$-split graph in $splitting(G,v)$ that is line.
\end{proof}

\subsubsection{L6-split graphs that are line}
\begin{lemma}\label{lem: L6-free-split graphs that are line}
	The graphs $L_{18},L_{19},$ and $L_{33}$ are the only $L_{6}$-split graphs that are line.
\end{lemma}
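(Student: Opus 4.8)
The plan is to reuse the template of Lemmas \ref{lem: L1-free-split graphs that are line}--\ref{lem: L5-free-split graphs that are line}: compute the orbits of $Aut(L_{6})$ on $V(L_{6})$, pick one vertex from each orbit, build $splitting(L_{6},v)$ for each chosen $v$, and keep the graphs that are line. First I would record the structure of $L_{6}$: it is the join of an edge $tu$ (two universal vertices of degree $5$) with a $2K_{2}$ on the four degree-$3$ vertices $r,s,v,w$, whose only internal edges are $rs$ and $vw$. From this the automorphism group is immediate --- $t$ and $u$ interchange, the pairs $\{r,s\}$ and $\{v,w\}$ swap, and each pair flips internally --- so $Aut(L_{6})$ has exactly two orbits, $\{t,u\}$ and $\{r,s,v,w\}$. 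Hence, by the orbit recipe stated earlier, it suffices to analyse two splittings: one at a degree-$5$ vertex and one at a degree-$3$ vertex.

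For each representative $v$ I would enumerate the bipartitions $N(v)=U\cup W$ with both parts nonempty; by Proposition \ref{pro: split graphs that are not free} a partition with an empty part merely reproduces $L_{6}$ as an induced subgraph and so cannot be line. The remaining bipartitions are examined up to the symmetry $U\leftrightarrow W$ and up to $Aut(L_{6}-v)$, and each resulting graph is tested for the line property. The cleanest test is the Krausz condition of Theorem \ref{lineCharcaterization by harary}: I check whether the edges partition into cliques with every vertex lying in at most two of them, and when this fails I certify non-line status by pointing to an induced copy of one of $L_{1},\dots,L_{9}$ (typically an induced claw $L_{1}$, which appears the moment a new vertex acquires three pairwise nonadjacent neighbours). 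Since every line graph is in particular $L_{6}$-free, any survivor is automatically an $L_{6}$-free-split graph and needs no further check; the survivors will turn out to be exactly $L_{18}$, $L_{19}$, and $L_{33}$, distributed across the two cases.

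The main obstacle is the degree-$5$ case. A universal vertex has a five-element neighbourhood admitting many essentially different bipartitions, several of which collapse to the same graph up to isomorphism while others produce non-line graphs that must each be certified separately. The delicate bookkeeping is therefore twofold: not to overlook any bipartition that yields a genuine line graph, and to identify isomorphism types correctly so that the final list is neither too short nor redundant. Reducing the bipartitions modulo $Aut(L_{6}-v)$ before drawing any graph keeps the enumeration small, after which the Krausz test renders each line/non-line decision routine.
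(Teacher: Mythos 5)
Your proposal is correct and matches the paper's own argument essentially step for step: the paper likewise partitions $V(L_{6})$ into the orbits $\{r,s,v,w\}$ and $\{t,u\}$, splits at one representative of each, and finds $L_{18}$ from the degree-$3$ case and $L_{18}$, $L_{19}$, $L_{33}$ from the degree-$5$ case, certifying the discarded splittings as $L_{1}$-, $L_{2}$-, or $L_{8}$-exist. Your additional observations (using the Krausz partition of Theorem \ref{lineCharcaterization by harary} to confirm the survivors, and noting that a line survivor is automatically $L_{6}$-free-split) are consistent with how the paper handles these points elsewhere.
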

\begin{proof}
	The graph $G$ on the right is a labeled $L_{6}$ graph whose vertex set admits the following partition into its orbits $\{\{r,s,v,w\},\{t,u\}\}$. W.l.o.g, we choose an arbitrary vertex from every block out of the partition. Hence we have two cases.
	
	\begin{minipage}{0.75\textwidth}
		Case 1, vertex $r$: the $L_{6}$-free-split graphs in $splitting(G,r)$ are $L_{1}$-exist or isomorphic to $L_{18}$. Thus, $L_{18}$ is the only $L_{6}$-split graph in $splitting(G,r)$ that is line.
		
		Case 2, vertex $t$: the $L_{6}$-free-split graphs in $splitting(G,t)$ are $L_{1}$-exist, $L_{2}$-exist, $L_{8}$-exist or isomorphic to either $L_{18}$, $L_{19}$, or $L_{33}$. 
	\end{minipage}\hfill
	\begin{minipage}{0.2\textwidth}
		
		\centering
		\begin{tikzpicture}[hhh/.style={draw=black,circle,inner sep=1pt,minimum size=0.2cm},scale=0.7]
			\node[hhh]  (r) at (1,2.5) 	{$r$};
			\node[hhh] 	(s) at (-1,2.5) {$s$};
			\node[hhh] 	(t) at (-1,1.5)	{$t$};
			\node[hhh] 	(u) at (1,1.5) {$u$};
			\node[hhh] 	(v) at (1,0.5) {$v$};
			\node[hhh]  (w) at (-1,0.5) {$w$};
			
			\node (graph) at (-90:0.2) {$G$};
			\draw (r)--(s)--(t)--(u)--(r)--(t) (u)--(s) (u)--(v)--(w)--(t)--(v) (u)--(w);
		\end{tikzpicture}
	\end{minipage}
	Thus, $L_{18}$, $L_{19}$, and $L_{33}$ are the only $L_{6}$-split graphs in $splitting(G,t)$ that are line.
\end{proof}

\subsubsection{L7-split graphs that are line}
\begin{lemma}\label{lem: L7-free-split graphs that are line}
	The graphs $L_{29},L_{30},L_{31}$, and $L_{32}$ are the only $L_{7}$-split graphs that are line.
\end{lemma}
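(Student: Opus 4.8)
The plan is to follow exactly the template established in the proof of Lemma~\ref{lem: L1-free-split graphs that are line}: construct $splitting(L_7)$ by splitting a representative vertex from each orbit of $Aut(L_7)$, and then identify which resulting graphs are line graphs (equivalently, claw-free and $\{L_2,\dots,L_9\}$-free). Looking at the labelled $L_7$ used in Lemma~\ref{lem: L7 critical line-exist}, with vertices $\{r,s,t,u,v,w\}$ where $r,s,t,u$ form the denser part (with $r,t,u$ mutually adjacent and $s$ attached to $r,t$) and $v,w$ form the triangle $u,v,w$ with $t,w$ also adjacent, the first step is to compute the automorphism group and its orbits. I expect the orbit partition to have roughly three or four blocks, so the proof will split into that many cases, one per representative vertex.

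First I would fix the orbit partition of $V(L_7)$; determining this correctly is essential since an error here would either miss a line graph or produce redundant cases. For each chosen representative $v$ I would then apply the $splitting(G,v,U,W)$ construction: replace $v$ by two adjacent vertices $v_1,v_2$ and distribute $N_{L_7}(v)$ between them in every possible way. By Proposition~\ref{pro: split graphs that are not free}, any distribution giving one of $v_1,v_2$ the entire neighborhood yields a graph that is not $L_7$-free-split, so those can be discarded immediately, trimming the casework. For each surviving distribution I would check whether the constructed $7$-vertex graph contains an induced claw or an induced $L_2$ (the two smallest forbidden subgraphs), and if not, verify directly that it is a line graph via the clique-edge-partition criterion of Theorem~\ref{lineCharcaterization by harary}. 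The expected outcome, matching the statement, is that exactly four such graphs survive across all cases and they are $L_{29},L_{30},L_{31},L_{32}$.

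The main obstacle will be the bookkeeping: $L_7$ has vertices of degree up to $4$, so a single high-degree representative can generate many distinct splittings (up to $2^{\deg(v)-1}$ essentially-different partitions after symmetry), and each must be tested for the claw and for $L_2$. Rather than enumerate every case in full, I would present the argument in the compressed style the paper already adopts for Lemmas~\ref{lem: L2-free-split graphs that are line} through~\ref{lem: L6-free-split graphs that are line}: for each orbit representative, state that the $L_7$-free-split graphs arising are either $L_1$-exist (i.e.\ claw-exist), $L_2$-exist, or isomorphic to one of the named $L_{29},\dots,L_{32}$, and conclude that only the latter are line. The genuinely non-routine part is confirming that $L_{29},L_{30},L_{31},L_{32}$ are line graphs, but this is already guaranteed by Theorem~\ref{lem: line-split-graphs are line graphs}, so I may simply cite it.

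Concretely, I would organize the write-up as follows:
\begin{proof}
	Let $G$ be a labelled $L_7$ as in the figure accompanying Lemma~\ref{lem: L7 critical line-exist}, and let $\pi$ be the orbit partition of $V(G)$ under $Aut(G)$. For each block of $\pi$ I choose a representative vertex and construct $splitting(G,v)$, discarding by Proposition~\ref{pro: split graphs that are not free} those splittings that are not $L_7$-free-split. Case by case, the $L_7$-free-split graphs so obtained are each $L_1$-exist, $L_2$-exist, or isomorphic to one of $L_{29},L_{30},L_{31},L_{32}$. Since the former two families are non-line by definition, and since $L_{29},\dots,L_{32}$ are line by Theorem~\ref{lem: line-split-graphs are line graphs}, the only $L_7$-split graphs that are line are $L_{29},L_{30},L_{31}$, and $L_{32}$.
\end{proof}
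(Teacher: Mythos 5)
Your proposal follows essentially the same route as the paper's own (sketch) proof: compute the orbit partition of $V(L_7)$ under $Aut(L_7)$ (the paper gives it explicitly as $\{\{r,w\},\{s,v\},\{t,u\}\}$, yielding three cases), construct $splitting(L_7,v)$ for one representative per orbit, and observe that every resulting $L_7$-free-split graph is $L_1$-exist, $L_2$-exist, or isomorphic to one of $L_{29},L_{30},L_{31},L_{32}$, which are line by Theorem~\ref{lem: line-split-graphs are line graphs}. Your write-up is correct and at the same level of detail as the paper's compressed case analysis, so no gap to report.
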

\begin{proof}
	The graph $G$ on the right is a labeled $L_{7}$ graph whose vertex set admits the following partition into its orbits $\{\{r,w\},\{s,v\},\{t,u\}\}$. 
	
	\begin{minipage}{0.75\textwidth}
		W.l.o.g, we choose an arbitrary vertex from every block out of the partition. Hence we have three cases.
		
		Case 1, vertex $r$: the $L_{7}$-free-split graphs in $splitting(G,r)$ are $L_{1}$-exist. Thus, there is no $L_{7}$-split graph in $splitting(G,r)$ that is line.
		
		Case 2, vertex $s$: the $L_{7}$-free-split graph in $splitting(G,s)$ is $L_{1}$-exist. Thus, there is no $L_{7}$-free-split graph in $splitting(G,s)$ that is line.

	\end{minipage}\hfill
	\begin{minipage}{0.2\textwidth}
		
		\centering
		\begin{tikzpicture}[hhh/.style={draw=black,circle,inner sep=1pt,minimum size=0.2cm},scale=0.7]
			\node[hhh]  (r) at (1,2.5) 	{$r$};
			\node[hhh] 	(s) at (-1,2.5) {$s$};
			\node[hhh] 	(t) at (-1,1.5)	{$t$};
			\node[hhh] 	(u) at (1,1.5) {$u$};
			\node[hhh] 	(v) at (1,0.5) {$v$};
			\node[hhh]  (w) at (-1,0.5) {$w$};
			
			\node (graph) at (-90:0.2) {$G$};
			\draw (r)--(s)--(t)--(u)--(r)--(t) (u)--(v)--(w) (u)--(w)--(t);
		\end{tikzpicture}
	\end{minipage}
	
	Case 3, vertex $t$: the $L_{7}$-free-split graphs in $splitting(G,t)$ are $L_{1}$-exist, $L_{2}$-exist, or isomorphic to one of the following graphs: $L_{29},L_{30},L_{31},L_{32}$. Thus, the $L_{7}$-split graphs in $splitting(G,t)$ that are line are $L_{29},L_{30},L_{31},L_{32}$.
\end{proof}

\subsubsection{L8-split graphs that are line}

\begin{lemma}\label{lem: L8-free-split graphs that are line}
	The graphs $L_{20}$ and $L_{21}$ are the only $L_{8}$-split graphs that are line.
\end{lemma}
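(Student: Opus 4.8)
The plan is to follow the template of Lemma~\ref{lem: L1-free-split graphs that are line}: generate $splitting(L_8)$ orbit by orbit and sift out the line graphs. First I would fix a labelling of $L_8$; since $L_8$ is $K_5$ minus an edge, its vertex set splits into a triangle $\{r,s,t\}$ of degree-$4$ vertices and a pair $\{u,v\}$ of degree-$3$ apex vertices with $u\not\sim v$, each apex adjacent to the whole triangle. Hence $\operatorname{Aut}(L_8)=S_{\{r,s,t\}}\times S_{\{u,v\}}$, and its orbits are exactly $\{r,s,t\}$ and $\{u,v\}$. By the proposition that similar vertices yield identical splittings, it suffices to split one representative of each orbit, giving two cases.

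In each case I would enumerate $splitting(L_8,\cdot)$ by distributing the neighbourhood of the split vertex over the two new adjacent vertices $x_1,x_2$ (each neighbour going to $x_1$ only, to $x_2$ only, or to both), discarding at once the distributions with $U=N(\cdot)$ or $W=N(\cdot)$, which are $L_8$-exist by Proposition~\ref{pro: split graphs that are not free}. Because every line graph is claw-free, I would first throw away every distribution that creates an induced claw ($L_1$); this disposes of most cases quickly. For the survivors I would test the line property directly via the Krausz clique-partition criterion (Theorem~\ref{lineCharcaterization by harary}), or equivalently hunt for an induced graph among $L_2,\dots,L_9$.

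Carrying this out, I expect the degree-$3$ case (splitting an apex $u$, where $N(u)=\{r,s,t\}$ and the graph on $\{r,s,t,v\}$ left after deleting $u$ is a $K_4$) to yield, up to symmetry, only two admissible distributions: sending two of $r,s,t$ to one new vertex and the third to the other produces a graph with an induced $L_2$, hence non-line, whereas sending one of $r,s,t$ to $x_1$, one to $x_2$, and one to both yields a line graph isomorphic to $L_{20}$. The degree-$4$ case (splitting a triangle vertex $r$, with $N(r)=\{s,t,u,v\}$ inducing $K_4-uv$) is the laborious one: here I expect exactly two distributions to survive as line graphs, namely the one that splits the triangle-partners $s,t$ to opposite new vertices while making both apexes adjacent to both new vertices, which reconstructs the octahedron $L_{21}$, and a second distribution reproducing $L_{20}$; every other distribution must be shown to create a claw, an induced $L_2$, or a fresh copy of $L_8$. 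Since $L_{20}$ and $L_{21}$ are line graphs by Theorem~\ref{lem: line-split-graphs are line graphs}, combining the two cases shows they are the only $L_8$-split line graphs.

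The main obstacle is precisely the degree-$4$ enumeration: confirming that every distribution other than the two named ones fails to be a line graph and pinning down for each failure a concrete forbidden subgraph among $L_1,\dots,L_9$. Organising the distributions up to the residual symmetry ($x_1\leftrightarrow x_2$, $s\leftrightarrow t$, $u\leftrightarrow v$) so that the count stays manageable, and checking that $L_{20}$ and $L_{21}$ are not accidentally produced in several guises, is where the real bookkeeping lies; the remainder is the same mechanical sifting already used for $L_1$ through $L_7$.
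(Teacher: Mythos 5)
Your proposal is correct and follows essentially the same route as the paper: partition $V(L_8)$ into the orbits $\{r,s,t\}$ (degree-$4$ triangle) and $\{u,v\}$ (degree-$3$ apexes), split one representative of each, and sift the resulting splittings by forbidden subgraphs, arriving at exactly the paper's conclusion that the apex split yields only $L_{20}$ (the other admissible distribution being $L_2$-exist) while the triangle-vertex split yields $L_{20}$ and the octahedron $L_{21}$. The paper's own proof is just as much a sketch as yours (it merely asserts the outcome of each case), so your level of detail, and indeed your extra identification of which distributions produce which graphs, is fully in line with it.
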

\begin{proof}
	The graph $G$ on the right is a labeled $L_{8}$ graph whose vertex set admits the following partition into its orbits $\{\{r,s,t\},\{u,v\}\}$. 
	
	\begin{minipage}{0.75\textwidth}
		W.l.o.g, we choose an arbitrary vertex from every block out of the partition. Hence we have two cases.
		
		Case 1, vertex $r$: the $L_{8}$-free-split graphs in $splitting(G,r)$ are $L_{1}$-exist, $L_{2}$-exist, or isomorphic to either $L_{20}$ or $L_{21}$. Thus, the $L_{8}$-split graphs in $splitting(G,r)$ that are line are $L_{20}$ and $L_{21}$.
		
		Case 2, vertex $u$: The $L_{8}$-free-split graphs in $splitting(G,u)$ are $L_{2}$-exist or isomorphic to $L_{20}$. 
	\end{minipage}\hfill
	\begin{minipage}{0.2\textwidth}
		
		\centering
		\begin{tikzpicture}[hhh/.style={draw=black,circle,inner sep=1pt,minimum size=0.2cm},scale=0.7]
			\node[hhh]  (r) at (0,2) 	{$r$};
			\node[hhh] 	(s) at (-1,0.5) {$s$};
			\node[hhh] 	(t) at (1,0.5) 	{$t$};
			\node[hhh] 	(u) at (0,1.1) 	{$u$};
			\node[hhh] 	(v) at (0,3) 	{$v$};
			
			\node (graph) at (-90:0.2) {$G$};
			
			\draw (r)--(s)--(t)--(r)--(u)--(s) (u)--(t)--(v)--(r)--(v)--(s);
		\end{tikzpicture}
	\end{minipage}
	
	Thus, the $L_{8}$-split graph in $splitting(G,u)$ that is line is $L_{20}$.
\end{proof}

\subsubsection{L9-split graphs that are line}
\begin{lemma}\label{lem: L9-free-split graphs that are line}
	The graphs $L_{34}$ is the only $L_{9}$-split graphs that are line.
\end{lemma}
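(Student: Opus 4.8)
The plan is to follow the template of Lemmas~\ref{lem: L1-free-split graphs that are line}--\ref{lem: L8-free-split graphs that are line}: fix a labelling of $L_9$, reduce to one vertex per orbit of $\mathrm{Aut}(L_9)$, and for each representative $v$ sift $splitting(L_9,v)$ for line graphs. First I would observe that $L_9$ is the wheel on six vertices, a hub $r$ joined to every vertex of a $5$-cycle $s,t,u,v,w$; its automorphism group is the dihedral group acting on the rim and fixing $r$, so $V(L_9)$ has exactly the two orbits $\{r\}$ and $\{s,t,u,v,w\}$. Since every line graph is $\{L_1,\dots,L_9\}$-free, any $L_9$-split line graph is in fact $L_9$-\emph{free}-split, so by Proposition~\ref{pro: split graphs that are not free} it suffices to examine the \emph{proper} splittings at the hub $r$ and at a single rim vertex, say $s$.

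For the rim vertex I would show that every proper split at $s$ is claw-exist, hence non-line. Here $N(s)=\{r,t,w\}$ with $r$ the hub and $t,w$ non-adjacent rim vertices; writing $s_1,s_2$ for the two new vertices, Proposition~\ref{pro: split graphs that are not free} forbids either $s_i$ from inheriting all of $\{r,t,w\}$. After the split the hub $r$ is still adjacent to the rim path $t,u,v,w$, whose independence number is $2$, and I would check that in every distribution of $\{r,t,w\}$ the forbidden-all condition leaves $r$ with a further neighbour that is independent from a suitable pair among $\{t,u,v,w\}$, producing a claw centred at $r$ (for instance $\{r;s_i,u,w\}$ when $s_i$ is the hub's only partner and misses $w$). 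Thus the rim case contributes no line graph.

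For the hub I would parametrise the split by assigning each rim vertex a type: adjacent to both $r_1,r_2$ (\emph{shared}), to $r_1$ only, or to $r_2$ only, with at least one vertex of each one-sided type so that the split is proper. Requiring claw-freeness at $r_1$, at $r_2$, and at each rim vertex translates into the condition that the $r_1$-only vertices and the $r_2$-only vertices each form a clique on the rim $5$-cycle, i.e. at most two consecutive vertices of each one-sided type. Enumerating such assignments up to rotation, reflection and the $r_1\leftrightarrow r_2$ swap leaves only the profiles (one $r_1$-only, one $r_2$-only, three shared), (one, two), and (two, two), and I would test each survivor against $L_2,\dots,L_9$ (equivalently against Harary's criterion).

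The crux, and the step I expect to be the main obstacle, is that claw-freeness alone does not isolate $L_{34}$: in the profile with a single vertex of each one-sided type, the two sub-cases where these vertices lie at rim-distance $1$ versus rim-distance $2$ are \emph{both} claw-free and share the degree sequence $(3,3,4,4,4,5,5)$. To separate them I would invoke the clique-partition characterisation (Theorem~\ref{lineCharcaterization by harary}): in the distance-$1$ case the two shared-and-hub $K_4$'s overlap on the edge $r_1r_2$, and each degree-$5$ vertex has a forced pair of covering cliques that clash on that edge, so no admissible clique partition exists and the graph is non-line; the distance-$2$ case is precisely $L_{34}$, already shown to be a line graph in Theorem~\ref{lem: line-split-graphs are line graphs}. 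The remaining profiles I would dispose of in the same way, each failing the clique-partition test (equivalently containing some $L_j$). Combining the rim and hub analyses then gives that $L_{34}$ is the unique $L_9$-split line graph.
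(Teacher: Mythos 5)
Your proof is correct and follows the same orbit-reduction template as the paper, but your case conclusions are the exact opposite of the paper's --- and yours are the right ones: the paper's own proof of this lemma has its two cases effectively transposed. The paper asserts that $splitting(G,r)$ (the hub) contains no line graph and that $L_{34}$ arises in $splitting(G,s)$ (a rim vertex, misprinted there as ``vertex $d$''). That is impossible on edge count alone: $L_{9}$ has $10$ edges, so any splitting at the degree-$3$ rim vertex $s$ has $11+|U\cap W|$ edges, and since a line graph must be $L_{9}$-free, Proposition~\ref{pro: split graphs that are not free} forces $U\neq N(s)\neq W$, hence $|U\cap W|\leq 1$ and at most $12$ edges, while $L_{34}$ has $14$; moreover, exactly as you argue, every proper rim splitting contains a claw centred at the hub, so the rim case yields nothing. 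Conversely, the hub splitting with three shared rim vertices and the two private rim vertices at rim-distance $2$ is isomorphic to $L_{34}$ (send the two new hub vertices to the adjacent degree-$5$ vertices of $L_{34}$ and the two private rim vertices to its nonadjacent degree-$3$ vertices), and it is a line graph, so the paper's Case~1 is false as stated. You also correctly isolate the delicate point the paper never confronts: the rim-distance-$1$ variant of the $(1,1,3)$ profile is claw-free with the same degree sequence $(3,3,4,4,4,5,5)$ and must be excluded separately; your forced-clique clash does this, and even faster, that graph contains $K_{5}$ minus an edge (that is, $L_{8}$) induced on the two hub vertices and the three shared rim vertices. Two minor tightenings: claw-freeness gives you only the necessity of your ``one-sided sets are cliques'' condition, which is all the pruning needs; and in the $(1,2)$ profile where the private singleton is nonadjacent to the private pair, the graph contains an induced $L_{9}$ again, which disposes of it immediately. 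In short, the statement of the lemma stands, but it is your argument, not the paper's printed one, that actually proves it.
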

\begin{proof}
	The graph $G$ on the right is a labeled $L_{9}$ graph whose vertex set admits the following partition into its orbits $\{\{r\},\{s,t,u,v,w\}\}$. W.l.o.g, we choose an arbitrary vertex from every block out of the partition. Hence we have two cases.
	
	\begin{minipage}{0.75\textwidth}
		Case 1, vertex $r$: the $L_{9}$-free-split graphs in $splitting(G,r)$ are $L_{1}$-exist or $L_{2}$-exist. Thus, there is no $L_{9}$-split graph in $splitting(G,r)$ that is line.
		
		Case 2, vertex $d$: the $L_{9}$-free-split graphs in $splitting(G,s)$ are $L_{1}$-exist, $L_{2}$-exist, $L_{3}$-exist, or isomorphic to $L_{34}$. 
	\end{minipage}\hfill
	\begin{minipage}{0.2\textwidth}
		
		\centering
		\begin{tikzpicture}[hhh/.style={draw=black,circle,inner sep=1pt,minimum size=0.2cm},scale=0.7]
			\node[hhh]  (r) at (0,1.5) 	{$r$};
			\node[hhh] 	(s) at (0,2.5) 	{$s$};
			\node[hhh] 	(t) at (-1,1.7)	{$t$};
			\node[hhh] 	(u) at (-0.7,0.5) {$u$};
			\node[hhh] 	(v) at (0.7,0.5) {$v$};
			\node[hhh]  (w) at (1,1.7) 	{$w$};
			
			\node (graph) at (-90:0.2) {$G$};
			\draw (s)--(t)--(u)--(v)--(w)--(s)--(r)--(t) (u)--(r)--(v) (r)--(w);
		\end{tikzpicture}
	\end{minipage}
	
	Thus, the $L_{9}$-split graph in $splitting(G,s)$ that is line is $L_{34}$.
\end{proof}

By Theorems \ref{lem: line-split-graphs are line graphs} and \ref{lem: unique line-split-graphs} and Lemmas \ref{lem: L1-free-split graphs that are line} to \ref{lem: L9-free-split graphs that are line}, the following result follows.
\begin{theorem}\label{theorm: line-split graphs}
	The graphs $\{L_{14},\dots,L_{21}\}$ are the $\elm$(line-split) graphs.
\end{theorem}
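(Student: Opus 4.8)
The plan is to merge the nine case lemmas into a single exhaustive list and then read off its minimal members. The family to be described is $\fs(\{L_{1},\dots,L_{9}\})$, the line graphs that admit a contraction isomorphic to one of the forbidden graphs $L_{1},\dots,L_{9}$. First I would observe that if a line graph $G$ is $\{L_{1},\dots,L_{9}\}$-split, then a single contraction of $G$ realizes exactly one $L_{i}$, so $G$ is $L_{i}$-split for that particular $i\in\{1,\dots,9\}$ and is therefore classified by the corresponding one of Lemmas~\ref{lem: L1-free-split graphs that are line}--\ref{lem: L9-free-split graphs that are line}. Reading off their conclusions, the line graphs arising as $L_{i}$-split graphs are $L_{14}$ (from $L_{1}$); $L_{15},L_{16}$ (from $L_{2}$); $L_{22},L_{23}$ (from $L_{3}$); $L_{24},L_{25}$ (from $L_{4}$); $L_{17},L_{26},L_{27},L_{28}$ (from $L_{5}$); $L_{18},L_{19},L_{33}$ (from $L_{6}$); $L_{29},L_{30},L_{31},L_{32}$ (from $L_{7}$); $L_{20},L_{21}$ (from $L_{8}$); and $L_{34}$ (from $L_{9}$). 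Their union is precisely $\{L_{14},\dots,L_{34}\}$.

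For the reverse inclusion I would invoke Theorem~\ref{lem: line-split-graphs are line graphs}, which shows every $L_{j}$ with $14\le j\le 34$ is a line graph; each is line-split by the very construction that produced it, so $\fs(\{L_{1},\dots,L_{9}\})$ coincides, as a set of isomorphism classes, with $\{L_{14},\dots,L_{34}\}$. Since $\elm$ depends only on the underlying set and the induced-subgraph relation among its members, it follows that $\elm\bigl(\fs(\{L_{1},\dots,L_{9}\})\bigr)=\elm(\{L_{14},\dots,L_{34}\})$. The right-hand side is exactly Theorem~\ref{lem: unique line-split-graphs}: each of $L_{22},\dots,L_{32}$ contains an induced $L_{14}$, the graph $L_{33}$ contains an induced $L_{20}$, and $L_{34}$ contains an induced $L_{15}$, whereas $L_{14},\dots,L_{21}$ are pairwise incomparable under induced containment, so the minimal members are $\{L_{14},\dots,L_{21}\}$. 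Chaining the two equalities yields the claimed identity.

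The substantive content is already discharged by the nine lemmas and by Theorem~\ref{lem: unique line-split-graphs}, so the remaining task is purely the bookkeeping of the first step: verifying that the nine lemma conclusions collectively hit each of $L_{14},\dots,L_{34}$ and introduce nothing outside that range. The overlaps between the lists (for instance $L_{17}$ appearing among the $L_{5}$-split line graphs) are harmless, since only the union as a set is used. I expect this union-and-minimize step to be routine; the genuine obstacle, had it not been settled earlier, would be the exhaustive per-lemma enumeration of splittings that are simultaneously line graphs, together with the induced-containment comparisons driving the $\elm$ computation.
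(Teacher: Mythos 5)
Your proposal is correct and takes essentially the same route as the paper: the paper's proof of this theorem is exactly the one-line deduction from Theorems~\ref{lem: line-split-graphs are line graphs} and~\ref{lem: unique line-split-graphs} together with Lemmas~\ref{lem: L1-free-split graphs that are line} to~\ref{lem: L9-free-split graphs that are line}, and your write-up simply makes the union-and-minimize bookkeeping explicit.
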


By Theorems \ref{Theorem: characeterization}, \ref{lem: critical Line-exist graphs}, and \ref{theorm: line-split graphs}, we deduce the following.
\begin{theorem}\label{them: Line graph characterization}
	Let $G$ be a $\{L_{14},\dots,L_{21}\}$-free graph that is non-isomorphic to any graph in $\{L_{1},\dots,L_{13}\}-\{L_{3}\}$. The graph $G$ is line if and only if any $G$-contraction is line.
\end{theorem}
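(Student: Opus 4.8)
The plan is to recognize Theorem \ref{them: Line graph characterization} as the specialization of the general characterization in Theorem \ref{Theorem: characeterization} to the family $\mathcal{H} := \{L_{1},\dots,L_{9}\}$. By the characterization of Beineke, a graph is line exactly when it is $\mathcal{H}$-free and non-line exactly when it is $\mathcal{H}$-exist; in particular ``critically non-line'' is synonymous with ``critically $\mathcal{H}$-exist''. Consequently, once the two hypotheses of Theorem \ref{Theorem: characeterization}---that $G$ be $\fs(\mathcal{H})$-free and not critically $\mathcal{H}$-exist---are translated into the forbidden-subgraph conditions stated here, the desired equivalence follows at once after re-reading ``$\mathcal{H}$-free'' as ``line''.

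First I would identify $\fs(\mathcal{H})$, the set of $\mathcal{H}$-free-split graphs, with the line-split graphs. Lemmas \ref{lem: L1-free-split graphs that are line} through \ref{lem: L9-free-split graphs that are line} enumerate, for each $i \le 9$, the $L_{i}$-split graphs that are line; taking their union realizes $\fs(\mathcal{H})$ as $\{L_{14},\dots,L_{34}\}$. Theorem \ref{theorm: line-split graphs} then records that $\elm(\fs(\mathcal{H})) = \{L_{14},\dots,L_{21}\}$, so by Proposition \ref{prop: elem} a graph is $\fs(\mathcal{H})$-free if and only if it is $\{L_{14},\dots,L_{21}\}$-free. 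This matches the first hypothesis appearing in the statement.

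Next I would invoke Theorem \ref{lem: critical Line-exist graphs}, which lists the critically $\mathcal{H}$-exist graphs as exactly $\{L_{1},\dots,L_{13}\}-\{L_{3}\}$ up to isomorphism; hence $G$ fails to be critically $\mathcal{H}$-exist precisely when it is non-isomorphic to every graph in this set, which is the second hypothesis of the statement. With both hypotheses of Theorem \ref{Theorem: characeterization} thereby verified for $G$ and $\mathcal{H} = \{L_{1},\dots,L_{9}\}$, applying that theorem yields that $G$ is $\mathcal{H}$-free if and only if every $G$-contraction is $\mathcal{H}$-free, which is the claimed line-graph equivalence.

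I expect the only delicate point to be the $\elm$-reduction: one must invoke Proposition \ref{prop: elem} to pass from the full list $\{L_{14},\dots,L_{34}\}$ of line-split graphs to the minimal sublist $\{L_{14},\dots,L_{21}\}$ used in the hypothesis, and one must likewise check that ``not critically $\mathcal{H}$-exist'' is genuinely \emph{equivalent} to ``non-isomorphic to any graph in $\{L_{1},\dots,L_{13}\}-\{L_{3}\}$'', rather than merely implied by it---this is what permits both directions of the biconditional. Beyond this bookkeeping the theorem carries no independent combinatorial content, since all the structural work has already been discharged in the preceding lemmas.
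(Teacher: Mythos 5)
Your proposal is correct and follows exactly the paper's own route: the paper derives this theorem by combining Theorem \ref{Theorem: characeterization} (with $\mathcal{H}=\{L_{1},\dots,L_{9}\}$), Theorem \ref{lem: critical Line-exist graphs}, and Theorem \ref{theorm: line-split graphs}, which is precisely your argument. Your explicit appeal to Proposition \ref{prop: elem} to justify replacing $\fs(\mathcal{H})$-free by $\{L_{14},\dots,L_{21}\}$-free only spells out a step the paper leaves implicit.
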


\section{Acknowledgments}
The research presented here is funded by the European Social Fund (ESF).

\bibliographystyle{chicago}
\bibliography{mybib}

\end{document}